\journal{???}
\newtheorem{theorem}{Theorem}[section]
\theoremstyle{plain}
\newtheorem{lemma}[theorem]{Lemma}
\newtheorem{observation}[theorem]{Observation}
\theoremstyle{definition}
\begin{document}

\begin{frontmatter}

\title{Graphs with few Hamiltonian Cycles}

\author[gent,mons]{Jan Goedgebeur\fnref{fwo}}
\ead{jan.goedgebeur@ugent.be}

\author[gent]{Barbara Meersman}
\ead{barbara.meersman@ugent.be}

\author[gent,cluj]{Carol T.\ Zamfirescu\fnref{fwo}}
\ead{czamfirescu@gmail.com}

\address[gent]{Department of Applied Mathematics, Computer Science \& Statistics\\
  Ghent University\\
Krijgslaan 281-S9 \\9000 Ghent, Belgium\medskip}

\address[mons]{Computer Science Department\\
  University of Mons\\
Place du Parc 20 \\7000 Mons, Belgium\medskip}

\address[cluj]{Department of Mathematics\\
  Babe\c{s}-Bolyai University\\
Cluj-Napoca, Roumania\\ }

%\small Department of Mathematics\\[-0.8ex]
%\small Babe\c{s}-Bolyai University\\[-0.8ex]
%\small Cluj-Napoca, Roumania\\[-0.8ex]
%\small\tt czamfirescu@gmail.com\\

\fntext[fwo]{Supported by a Postdoctoral Fellowship of the Research Foundation Flanders (FWO).}

%%Jan Goedgebeur\thanks{Supported by a PhD grant from the Research Foundation of Flanders (FWO).}\\
%\author{Jan Goedgebeur\\
%\small Department of Applied Mathematics, Computer Science and Statistics\\[-0.8ex]
%\small Ghent University\\[-0.8ex]
%\small Krijgslaan 281-S9,\\[-0.8ex]
%\small 9000 Ghent, Belgium\\[-0.8ex]
%\small\tt jan.goedgebeur@ugent.be\\
%\\
%Carol T.\ Zamfirescu\\
%\small Department of Applied Mathematics, Computer Science and Statistics\\[-0.8ex]
%\small Ghent University\\[-0.8ex]
%\small Krijgslaan 281-S9,\\[-0.8ex]
%\small 9000 Ghent, Belgium\\[-0.8ex]
%\small and\\[-0.8ex]
%\small Department of Mathematics\\[-0.8ex]
%\small Babe\c{s}-Bolyai University\\[-0.8ex]
%\small Cluj-Napoca, Roumania\\[-0.8ex]
%\small\tt czamfirescu@gmail.com\\
%}
%
%% \date{\dateline{submission date}{acceptance date}\\
%% \small Mathematics Subject Classifications: comma separated list of
%% MSC codes available from http://www.ams.org/mathscinet/freeTools.html}
%
%\date{\dateline{XX}{XX}\\
%\small Mathematics Subject Classifications: 05C30, 05C55, 05C85, 68R10, 90-04}

%--------------------------------------------------------------------------------------------------

%\date{} % delete this line to display the current date
%\maketitle

%\vspace*{-5ex}

\begin{abstract}

%\jg{TODO: ideeen voor een betere titel?
%\begin{itemize}
%\item On graphs with few Hamiltonian cycles
%\item Generation and properties of graphs with few Hamiltonian cycles
%\item Generation  of graphs with few Hamiltonian cycles
%\item ???
%\end{itemize}
%}

We describe an algorithm for the exhaustive generation of non-isomorphic graphs with a given number $k \ge 0$ of hamiltonian cycles, which is especially efficient for small $k$. Our main findings, combining applications of this algorithm and existing algorithms with new theoretical results, revolve around graphs containing exactly one hamiltonian cycle (1H) or exactly three hamiltonian cycles (3H). Motivated by a classic result of Smith and recent work of Royle, we show that there exist nearly cubic 1H graphs of order $n$ iff $n \ge 18$ is even. This gives the strongest form of a theorem of Entringer and Swart, and sheds light on a question of Fleischner originally settled by Seamone. We prove equivalent formulations of the conjecture of Bondy and Jackson that every planar 1H graph contains two vertices of degree~2, verify it up to order~16, and show that its toric analogue does not hold. We treat Thomassen's conjecture that every hamiltonian graph of minimum degree at least $3$ contains an edge such that both its removal and its contraction yield hamiltonian graphs. We also verify up to order~21 the conjecture of Sheehan that there is no 4-regular 1H graph. Extending work of Schwenk, we describe all orders for which cubic 3H triangle-free graphs exist. We verify up to order~$48$ Cantoni's conjecture that every planar cubic 3H graph contains a triangle, and show that there exist infinitely many planar cyclically 4-edge-connected cubic graphs with exactly four hamiltonian cycles, thereby answering a question of Chia and Thomassen. Finally, complementing work of Sheehan on 1H graphs of maximum size, we determine the maximum size of graphs containing exactly one hamiltonian path and give, for every order $n$, the exact number of such graphs on $n$ vertices and of maximum size.

\end{abstract}

\begin{keyword}
Hamiltonian cycle \sep uniquely hamiltonian \sep uniquely traceable \sep Bondy-Jackson conjecture \sep cubic graph \sep girth \sep exhaustive generation% \jg{TODO: aanvullen!}
\end{keyword}

\end{frontmatter}

%--------------------------------------------------------------------------------------------------

\section{Introduction}
\label{section:intro}

%A graph is called \textit{uniquely hamiltonian} if it contains exactly one hamiltonian cycle. %\jg{Eventueel nog verwijzen naar bv boek Diestel voor algemene definities?} CZ: Nee.
In 1946, Smith showed that every edge in a cubic graph is contained in an even number of hamiltonian cycles; see Tutte's paper~\cite{Tu46} or Berge's book~\cite[pp.~189--190]{berge1973graphs}. Thus, a hamiltonian cubic graph contains at least three hamiltonian cycles, so among cubic graphs there exist no graphs with exactly one hamiltonian cycle, i.e.\ \emph{uniquely hamiltonian} graphs. Strengthening Smith's result, Thomason proved in 1978 that in a graph containing only vertices of odd degree, every edge is contained in an even number of hamiltonian cycles~\cite{Th78}. Thus, uniquely hamiltonian graphs without vertices of even degree, and in particular $k$-regular uniquely hamiltonian graphs do not exist for odd $k$. What about even $k$? Using Lov\'asz' Local Lemma, Thomassen~\cite{thomassen1998independent} proved that $k$-regular uniquely hamiltonian graphs do not exist for even $k \ge 300$, and with a careful choice of parameters his theorems give 73 instead of 300. This was improved by Haxell, Seamone, and Verstraete~\cite{haxell2007independent} to $k \ge 23$. Sheehan conjectured that there are no 4-regular uniquely hamiltonian graphs~\cite{sheehan1975multiplicity}. By Petersen's 2-Factor Theorem, the truth of this conjecture would imply that cycles are the only regular uniquely hamiltonian graphs.

In another article~\cite{sheehan1977graphs}, Sheehan studied the maximum size of uniquely hamiltonian graphs and proved that such a graph on $n$ vertices contains at most $\lfloor \frac{n^2}{4} \rfloor + 1$ edges. He provides for each $n \ge 3$ an $n$-vertex uniquely hamiltonian graph of maximum size and states that these are the only uniquely hamiltonian graphs of this size. Barefoot and Entringer~\cite{barefoot1981census} proved that Sheehan erred for $n \ge 9$ by showing that for every $n \ge 7$ there exist exactly $2^{\lceil \frac{n}{2} \rceil -4}$ uniquely hamiltonian graphs of maximum size.

%\cz{Mention somewhere Haythorpe's article ``On the Minimum Number of Hamiltonian Cycles in Regular Graphs''~\cite{haythorpe2017minimum}.} \jg{We moeten dit zeker vermelden wanneer we de computationele resultaten over 4-reguliere bespreken en inderdaad misschien best ook al hier in de introductie?} CZ: Later is beter.

Thomason's aforementioned result implies that a uniquely hamiltonian graph must have at least two vertices of even degree. This relationship between a graph's degrees and whether or not it is uniquely hamiltonian raises some natural questions, for instance whether there are uniquely hamiltonian graphs of minimum degree~3. Entringer and Swart answered this question affirmatively by describing an infinite family of \emph{nearly cubic} graphs, i.e.\ graphs with exactly two vertices of degree~4 and all other vertices cubic~\cite{entringer1980spanning}. Fleischner~\cite{fleischner2014uniquely} recently showed that there exist uniquely hamiltonian graphs in which every vertex has degree~4 or~14.  %\cz{Perhaps this sentence should be moved to somewhere later, for instance to location ZZZ on p 14-15} \jg{Op plaats ZZZ moet het inderdaad zeker vermeld worden, maar we kunnen het natuurlijk ook hier laten staan (en dus 2x vermelden). Kies maar wat je het beste lijkt.}

Bondy and Jackson~\cite{bondy1998vertices} proved that a uniquely hamiltonian graph of order~$n$ has at least one vertex of degree at most $c \log_2 8n + 3$, i.e.\ the minimum degree cannot be greater than this number, where $c \approx 2.41$. Abbasi and Jamshed~\cite{abbasi2006degree} improved this to $c \log_2 n + 2$, where $c \approx 1.71$. In their article, Bondy and Jackson were particularly interested in \emph{planar} uniquely hamiltonian graphs. They showed that such a graph must contain at least two vertices of degree 2 or 3, and conjectured that every planar uniquely hamiltonian graph contains at least two vertices of degree~2.

This paper is structured as follows. In Section~\ref{section:generation_algo} we describe our algorithm for the generation of graphs with ``few'' hamiltonian cycles---we emphasise that this includes the important class of non-hamiltonian graphs. Thereafter, in Section~\ref{sect:results}, we present new theoretical results which we combine with the conclusions derived from our implementation of the generation algorithm, as well as existing algorithms. More specifically, in Section~\ref{subsect:results_uhg} we determine all orders for which uniquely hamiltonian nearly cubic graphs exist, thereby giving the strongest form of a theorem of Entringer and Swart~\cite{entringer1980spanning}. This result extends work of Royle~\cite{Ro17} and addresses a question of Fleischner~\cite{fleischner2014uniquely} originally settled by Seamone~\cite{seamone2015uniquely}. We give equivalent formulations of the conjecture of Bondy and Jackson~\cite{bondy1998vertices} mentioned above, verify it up to order~16, but also present a uniquely hamiltonian graph on the torus with exactly one 2-valent vertex. We also treat the conjecture of Thomassen~\cite{thomassen1996number} that every hamiltonian graph $G$ of minimum degree at least~$3$ contains an edge $e$ such that both $G - e$ (remove the edge but not its endpoints) and $G/e$ (contract the edge) are hamiltonian. It is elementary to see that Thomassen's conjecture holds for all graphs with at least two hamiltonian cycles, but it is open for uniquely hamiltonian graphs. We present the orders of all hamiltonian 4-regular graphs up to order~21 with the minimum number of hamiltonian cycles, extending work of Haythorpe~\cite{haythorpe2017minimum}. This verifies up to order~21 Sheehan's conjecture~\cite{sheehan1975multiplicity} that no 4-regular uniquely hamiltonian graph exists.

In Section~\ref{subsect:results_thg}, motivated by a classic result of Smith, we study cubic graphs with exactly three hamiltonian cycles. Note that these three hamiltonian cycles together cover each edge exactly twice and thus form a \emph{cycle double cover}. There exist small such graphs of girth~3 in abundance---however, we show that up to order~32 there are only two cubic triangle-free graphs containing exactly three hamiltonian cycles (two generalised Petersen graphs), but that starting from order~34 every even order is covered. This extends a result of Schwenk~\cite{schwenk1989enumeration}. Making use of a theorem of Thomason~\cite{Th78}, we prove that a graph in which every vertex has odd degree and which has exactly $p$ hamiltonian cycles, where $p$ is prime, must be 3-connected. Thus, any counterexample to Cantoni's conjecture~\cite{Tutte79} stating that every planar cubic graph with exactly three hamiltonian cycles contains a triangle, is 3-connected. We verify this conjecture up to order~48. We also show that for every $k$ that is 0 or at least 4 there exists a planar cyclically 4-edge-connected cubic graph with exactly $k$ hamiltonian cycles, while by Thomason's result mentioned in the first paragraph, no such graph exists for $k \in \{ 1,2 \}$. For many of the above conjectures we establish significantly better bounds if a lower bound on the girth is imposed.

In Section~\ref{subsect:chia_thomassen} we prove that there exist infinitely many planar cyclically 4-edge-connected cubic graphs with exactly four hamiltonian cycles, thereby answering a question of Chia and Thomassen~\cite{chia2012}.

In Section~\ref{subsect:results_utg}, naturally complementing work of Sheehan~\cite{sheehan1977graphs} on the size of uniquely hamiltonian graphs, we give structural results on graphs containing exactly one hamiltonian path and determine their maximum size using a result of Barefoot and Entringer~\cite{barefoot1981census}. We also give the exact number of such graphs of maximum size. The paper ends with Section~\ref{sect:comput_results} in which we briefly comment on the implementation of our algorithm and on correctness testing. The total computational effort for this project amounted to 40 CPU years.

\section{Generation of graphs with few hamiltonian cycles}
\label{section:generation_algo}

We describe an algorithm to generate all pairwise non-isomorphic graphs of a given order~$n$, containing, for a fixed non-negative integer $k$, exactly $k$ hamiltonian cycles. More specifically, in Section~\ref{subsect:gen_uhg} we present an algorithm for generating uniquely hamiltonian graphs, while in Section~\ref{subsect:gen_extensions} we describe how this algorithm can be extended to generate graphs with $k \neq 1$ hamiltonian cycles efficiently. Our experiments indicate that our algorithm is significantly more efficient than previous algorithms for $k \leq 7$. %\jg{Of dit pas later in het manuscript vermelden?} CZ: Nee, vind ik goed hier. %JG: ok, geverifeerd tem 11v!
To the best of our knowledge, all previously available methods to generate exhaustively graphs with exactly $k$ hamiltonian cycles consisted in using a program such as \textit{geng}~\cite{nauty-website, mckay_14} to generate all graphs with a given order, to then use a separate program to count the number of hamiltonian cycles of the generated graphs, and finally to filter the graphs with the desired number of hamiltonian cycles. %\jg{Voel je vrij om dit anders te verwoorden. Ons algoritme is natuurlijk het  efficientst voor kleine $k$, vooral $k=1$ (maar voor $k=0$ is het niet veel sneller...)}

In Section~\ref{sect:results} we present the computational results which we obtained with our implementation of this algorithm, together with new theoretical results. In Section~\ref{sect:comput_results} we report the running times of the algorithm and how we tested the correctness of our implementation.

\subsection{Generation of uniquely hamiltonian graphs}
\label{subsect:gen_uhg}

To generate all uniquely hamiltonian graphs of a given order~$n$, we start the algorithm from a cycle of order~$n$ and, in essence, recursively add edges to it in all possible ways as long as the graph stays uniquely hamiltonian. It is clear that all uniquely hamiltonian graphs of order $n$ can be obtained in this way.

To guarantee that the algorithm does not output isomorphic copies, we use McKay's canonical construction path method~\cite{mckay_98}. In order to use this approach, we first have to define a \textit{canonical reduction} which is unique up to isomorphism. An \textit{expansion} is an operation which constructs a larger graph from a given graph, while the reverse operation is called a \textit{reduction}. We call an expansion that is the inverse of a canonical reduction a \textit{canonical expansion}. The two rules of the canonical construction path method are:

\begin{enumerate}

\item Only accept a graph if it was constructed by a canonical expansion.

\item For every graph $G$ to which expansion operations are applied, only perform one expansion from each equivalence class of expansions of $G$.

\end{enumerate}

The pseudocode of our algorithm to generate all uniquely hamiltonian graphs of order $n$ can be found in Algorithm~\ref{algo:generate_uhg} (recall that we start the algorithm from a cycle of order $n$). We will now explain how we applied the canonical construction path method for the generation of uniquely hamiltonian graphs. In Theorem~\ref{thm:proof_complete} we then prove that our algorithm indeed generates all pairwise non-isomorphic uniquely hamiltonian graphs of a given order $n$.

%\begin{algorithm}[H]
\begin{algorithm}[ht!]
\caption{Construct(graph $G$)}
  \begin{algorithmic}[1]
  \label{algo:generate_uhg}
  \STATE Output $G$
  \STATE Determine a list $L$ of pairs of non-adjacent vertices in $G$
  \STATE Determine orbits of $L$
  \FOR{one representative vertex-pair $\{a,b\}$ in every orbit of $L$}
  	\STATE Add edge $ab$ to $G$
  	\IF{this expansion was canonical and $G$ is still uniquely hamiltonian} \label{line:uh_test}
		\STATE Construct($G$)
	\ENDIF  	
	\STATE Remove edge $ab$ from $G$
  \ENDFOR
  \end{algorithmic}
\end{algorithm}

In our case, there is only one expansion operation: to insert an edge between two non-adjacent vertices. We implement the second rule of the canonical construction path method by first computing the orbits of all pairs of non-adjacent vertices and only applying the expansion operation for one representative pair from each orbit. (We use the program \textit{nauty}~\cite{nauty-website, mckay_14} to determine all generators of the automorphism group of a graph. Thereafter, we use a union-find algorithm to determine the orbits of pairs of non-adjacent vertices.)

A \textit{reducible} edge is an edge $e$ in $G$ for which $G-e$ is still uniquely hamiltonian. So every edge of $G$ is reducible, except for the $n$ edges of the unique hamiltonian cycle of $G$. For the first rule of the canonical construction path method we first have to define a canonical reduction which is unique up to isomorphism. In order to do so efficiently, we assign a 9-tuple $(x_0,\ldots,x_8)$ to every reducible edge of a uniquely hamiltonian graph $G$ and define a \textit{canonical edge} as a reducible edge with the lexicographically maximal value for this 9-tuple. The canonical reduction is defined as the reduction of a canonical edge.

We denote by ${\mathfrak h}$ the hamiltonian cycle of $G$. For a reducible edge $e = ab$, the invariants $x_0,\ldots,x_6$ are invariants of increasing discriminating power and cost and are defined as follows:

\begin{itemize}	
\addtolength{\itemsep}{-1mm}
\item $x_0$ ($x_1$) is the maximum (minimum) of the degrees of $a$ and $b$.
%\item $x_1$ is the minimum of the degrees of $a$ and $b$.
\item $x_2$ is the negative of the minimum length between $a$ and $b$ on ${\mathfrak h}$.
\item $x_3$ ($x_4$) is the negative of the maximum (minimum) of the sum of the degrees of the vertex preceding and succeeding $a$ on ${\mathfrak h}$ and the sum of the degrees of the vertex preceding and succeeding $b$ on ${\mathfrak h}$.
%\item $x_4$ is the negative of the minimum of the sum of the degrees of the vertex preceding and succeeding $a$ ($b$) on ${\mathfrak h}$.
%Omit opposite edge
\item $x_5$ is the number of common neighbours of $a$ and $b$.
\item $x_6$ is the negative of the number of vertices at distance at most 2 of $a$ and $b$.
\end{itemize}

%\jg{Zijn de definities duidelijk genoeg? Vooral $x_3$ (en $x_4$) kunnen misschien onduidelijk zijn...}

%The value of $(x_0,x_1)$ for an edge $e$ is defined as the lexicographically largest degree vector of $e$. The invariants $x_2,\ldots,x_6$ are invariants of increasing discriminating power and cost, but to keep things concise we refer to~\cite{thesis_barbara} for their precise definition.

The values $x_0,\ldots,x_6$ are invariant under isomorphisms, but in principle two non-equivalent edges can have the same value for $(x_0,\ldots,x_6)$. Therefore we define $\{x_7,x_8\}$ as the lexicographically largest label of an edge which is in the same orbit as $e$ in the canonical labelling of the graph. (We use the program \textit{nauty}~\cite{nauty-website, mckay_14} to compute a canonical labelling.)

For the correctness of the algorithm it would be sufficient only to compute the values of $x_7$ and $x_8$, but as computing a canonical labelling is computationally expensive, it is much more efficient to use the other invariants $x_i$, $ 0 \leq i \leq 6$, as well.

More specifically, we first compute the value of $x_0$ and $x_1$ for every reducible edge of the graph. Since we require a canonical edge to have maximal value for $(x_0,\ldots,x_8)$, we only need to compute $x_{i+1}$ for the reducible edges which have maximal value for $(x_0,\ldots,x_i)$. Furthermore, if the edge $e$ which was added by the last expansion operation is no longer in the list of reducible edges with maximal value for $(x_0,\ldots,x_i)$, we do not have to compute $x_{i+1}$ as we already know that our last expansion was not canonical. Similarly, if $e$ is the only edge with maximal value for $(x_0,\ldots,x_i)$, we do not have to compute $x_{i+1}$ as we already know that our last expansion was canonical.

The discriminating power of the invariants $x_0,\ldots,x_6$ is usually sufficient to avoid the more expensive computation of $x_7$ and $x_8$. For example, this is so in 99.4\,\% of the cases when generating uniquely hamiltonian graphs of order 11. Finally, note that the invariants $x_0$ and $x_1$ allow a look-ahead: in many cases it is easy to determine upfront that inserting an edge between two non-adjacent vertices $a$ and $b$ cannot be canonical as there will be other edges with a larger degree vector.

The proof of Theorem~\ref{thm:proof_complete} is analogous to the corresponding arguments in the proofs of other generation algorithms---nevertheless, we have chosen to include it for completeness' sake.

\begin{theorem} \label{thm:proof_complete}
When Algorithm~\ref{algo:generate_uhg} is applied to an $n$-cycle, it outputs exactly one representative of every isomorphism class of uniquely hamiltonian graphs of order~$n$.
\end{theorem}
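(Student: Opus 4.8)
The plan is to follow the standard correctness template for McKay's canonical construction path method, instantiated for the expansion ``add an edge between non-adjacent vertices'' and its inverse reduction. First I would record the two structural facts that make the method applicable. As already observed, every edge of a uniquely hamiltonian graph $G$ lying off the unique hamiltonian cycle ${\mathfrak h}$ is reducible, and deleting a reducible edge yields a uniquely hamiltonian graph of the same order whose unique hamiltonian cycle is again ${\mathfrak h}$. Hence, whenever $G$ has more than $n$ edges it admits a reducible edge, and by iterating the reduction one reaches, edge by edge, the cycle ${\mathfrak h}$ itself, i.e.\ an $n$-cycle. This reduction chain is the backbone of completeness; it also gives termination, since each expansion strictly increases the edge count, which is bounded above (e.g.\ by $\lfloor n^2/4\rfloor+1$ for uniquely hamiltonian graphs).

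Next I would check that the canonical reduction is well defined and invariant under isomorphism. The invariants $x_0,\ldots,x_6$ are manifestly preserved by graph isomorphisms, and $\{x_7,x_8\}$ is read off a canonical labelling and is therefore also isomorphism-invariant; consequently the set of canonical edges of $G$---the reducible edges of lexicographically maximal $(x_0,\ldots,x_8)$---forms a single orbit under $\mathrm{Aut}(G)$, so deleting a canonical edge produces a graph determined up to isomorphism. Thus every uniquely hamiltonian $G$ that is not an $n$-cycle has a well-defined \emph{canonical parent}, obtained by deleting a canonical edge, and any isomorphism $G\to H$ carries canonical edges to canonical edges and hence canonical parents to canonical parents. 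This equips the set of isomorphism classes of uniquely hamiltonian graphs of order $n$ with the structure of a rooted tree, rooted at the class of the $n$-cycle.

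I would then prove that every class is output \emph{at least} once by induction on the number of edges. The base case is the $n$-cycle, output at the first call. For $G$ with $m>n$ edges, its canonical parent $G'$ has $m-1$ edges, so by induction some isomorph of $G'$ is output. Writing $G=G'+e^{*}$ with $e^{*}$ a canonical edge of $G$, the endpoints of $e^{*}$ form a non-adjacent pair of $G'$ lying in some orbit of the list $L$; when the loop of Algorithm~\ref{algo:generate_uhg} treats the representative of that orbit it produces a graph isomorphic to $G$, the inserted edge is canonical in that child (being the inverse of the canonical reduction), so the test on line~\ref{line:uh_test} accepts it, and $G$ is output.

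The main obstacle is the \emph{at most once} direction, which requires the orbit--stabiliser bookkeeping at the heart of the method. Rule~1 forces any accepted copy of a class $[H]$ to be built by a canonical expansion, hence only as a child of the canonical parent class, which by the inductive tree structure is scanned exactly once; this rules out duplicates arising from different parents. Within the unique parent representative $G'$, I would show that the augmentations producing accepted copies of $H$ form a single $\mathrm{Aut}(G')$-orbit: given two accepted augmentations $G'+e$ and $G'+e'$ with $e,e'$ canonical in the respective (isomorphic) children, compose an isomorphism of the children with an automorphism of $H$ carrying one canonical edge to the other (possible since canonical edges form one orbit), and restrict to obtain an automorphism of $G'$ identifying the two augmentations. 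Since canonicity of the inserted edge is constant along an $\mathrm{Aut}(G')$-orbit, rule~2's choice of one representative per orbit of $L$ then yields exactly one accepted copy of $[H]$, completing the proof. The delicate point to get right is precisely this compatibility between the orbit selection of rule~2 and the canonicity filter of rule~1.
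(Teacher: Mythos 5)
Your proposal is correct and follows essentially the same route as the paper's proof: a double induction on the number of edges, with completeness obtained by reducing along a canonical edge and re-inserting an orbit representative (using that canonicity is isomorphism-invariant), and uniqueness obtained by transporting one canonical edge to the other via an automorphism of the child and restricting to an automorphism of the common parent, contradicting the one-representative-per-orbit rule. The extra scaffolding you add (the canonical-parent tree and the observation that the canonical edges form a single $\mathrm{Aut}(G)$-orbit because $\{x_7,x_8\}$ separates edge orbits) is implicit in the paper's argument rather than a genuinely different approach.
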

\begin{proof}
First we prove that at least one representative of every isomorphism class of uniquely hamiltonian graphs of order $n$ is generated. Assume by induction on the number of edges $m$ that every uniquely hamiltonian graph of order $n$ with at most $m$ edges is generated and accepted by the algorithm. Consider a uniquely hamiltonian graph $G$ of order $n$ with $m+1$ edges. Since $m+1 > m \geq n$, $G$ contains a reducible edge, thus also a canonical edge $ab$. By definition $G-ab$ is uniquely hamiltonian, so by induction a graph $H$ isomorphic to $G-ab$ was generated. Let $\gamma$ be an isomorphism from $G-ab$ to $H$. The graph $H$ has a pair of non-adjacent vertices $\{c,d\}$ which are in the same orbit of non-adjacent vertices as $\{\gamma(a),\gamma(b)\}$ to which the edge insertion operation is applied. This produces a graph $H+cd$ which is isomorphic to $G$ and let $\gamma'$ be an isomorphism from $G$ to $H+cd$.
The edge $cd$ is in the same orbit of edges as $\gamma'(ab)$ under the action of the automorphism group of $H+cd$. This implies that $cd$ has maximal value for $(x_0,\ldots,x_8)$, so $H+cd$ is accepted by the algorithm.

Now we show that at most one representative of every isomorphism class of uniquely hamiltonian graphs of order $n$ is generated. Assume by induction on the number of edges $m$ that every uniquely hamiltonian graph of order $n$ with at most $m$ edges is generated at most once by the algorithm. Let $G_1$ and $G_2$ be two isomorphic uniquely hamiltonian graphs of order $n$ with $m+1$ edges that are both accepted by the algorithm. Let $\gamma$ be an isomorphism from $G_1$ to $G_2$ and let $e_i=a_ib_i$ be the canonical edge from $G_i$ which was added in the last step of the algorithm, for $i \in \{1,2\}$ (so $G_i$ was obtained from $G_i-e_i$ by adding $e_i$).
Since $e_1$ and $e_2$ are both canonical edges, $\gamma(e_1)$ is in the same orbit of edges as $e_2$ under the action of the automorphism group of $G_2$. So there is an automorphism of $G_2$ which maps $\gamma(e_1)$ to $e_2$. But this automorphism induces an isomorphism $\gamma'$ from $G_1-e_1$ to $G_2-e_2$.
Thus, by our induction hypothesis, $G_1-e_1$ and $G_2-e_2$ are the same graph and hence $\gamma'$ is an automorphism which maps $\{a_1, b_1\}$ to $\{a_2,b_2\}$.
So $\{ a_1, b_1\}$ is in the same orbit of non-adjacent vertex pairs as $\{ a_2, b_2\}$, while our algorithm only inserts an edge for one representative of every orbit of non-adjacent vertex pairs. %\jg{Carol: kijk dit aub heel grondig na! Vooral het tweede deel.} \cz{Laatste stuk van tweede deel nog niet super-duidelijk voor mij.}
\end{proof}

\subsection{Extensions of the generation algorithm}
\label{subsect:gen_extensions}

\subsubsection{Generation of graphs with $k > 1$ hamiltonian cycles}
\label{subsect:gen_k_hc}

Our algorithm for uniquely hamiltonian graphs from Section~\ref{subsect:gen_extensions} can be easily extended to generate (hamiltonian) graphs with at most or exactly $k > 1$ hamiltonian cycles. The modified algorithm still starts from a cycle of order $n$, but now on line~\ref{line:uh_test} of Algorithm~\ref{algo:generate_uhg} we have to test if $G$ has at most $k$ hamiltonian cycles instead of testing if $G$ is uniquely hamiltonian. Note that some of the invariants in $x_0,\ldots,x_6$ have to be adapted or omitted as e.g.\ $x_2$ relies on the fact that the graph only contains one hamiltonian cycle. In particular the invariants $x_2$, $x_3$ and $x_4$ are omitted. However, the discriminating power of the remaining invariants $x_0,x_1,x_5,x_6$ is still sufficient to avoid the more expensive computation of $x_7$ and $x_8$ in most cases. (For example: when generating graphs of order 11 with exactly three hamiltonian cycles, this can be avoided in 98.6\,\% of the cases.)

If we only want to generate hamiltonian graphs with exactly (instead of at most) $k$ hamiltonian cycles, we nevertheless have to count the number of hamiltonian cycles before outputting the graphs. (Note that this method is not particularly efficient to generate graphs with exactly $k$ hamiltonian graphs for large values of $k$.)
It is also clear that our algorithm is not very efficient to generate all graphs with $k$ hamiltonian cycles for large values of $k$---however, our experiments indicate that our algorithm is significantly more efficient than previous algorithms for $k \leq 7$.

The only other modification which is required is in the definition of reducible edge. Given a hamiltonian graph $G$ with at most $k$ hamiltonian cycles, an edge $e$ of $G$ is \emph{reducible} if and only if $G-e$ is hamiltonian.

The proof of the following theorem is analogous to the proof of Theorem~\ref{thm:proof_complete} and therefore omitted.

\begin{theorem}
When the modified version of Algorithm~\ref{algo:generate_uhg} is applied to a cycle of order $n$, the algorithm outputs exactly one representative of every isomorphism class of hamiltonian graphs of order $n$ with at most $k$ hamiltonian cycles.
\end{theorem}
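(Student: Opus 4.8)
The plan is to mirror the two-part induction on the number of edges used in the proof of Theorem~\ref{thm:proof_complete}, replacing ``uniquely hamiltonian'' throughout by ``hamiltonian with at most $k$ hamiltonian cycles'' and using the modified notion of reducible edge ($e$ is reducible in $G$ iff $G-e$ is hamiltonian). The base object of the recursion is again the $n$-cycle, which is hamiltonian and has a single hamiltonian cycle, hence lies in the target class for every $k \ge 1$.

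For the completeness direction I would argue as follows. Assuming every member of the class on $n$ vertices with at most $m$ edges is generated, take such a graph $G$ with $m+1 > n$ edges. Fixing any hamiltonian cycle ${\mathfrak h}$ of $G$, since $|E(G)| = m+1 > n = |E({\mathfrak h})|$ there is an edge $e \notin {\mathfrak h}$; then ${\mathfrak h}$ survives in $G-e$, so $G-e$ is hamiltonian and $e$ is reducible, whence a canonical edge $ab$ exists. The crucial closure observation is that $G-ab$ again belongs to the class: every hamiltonian cycle of $G-ab$ is a hamiltonian cycle of $G$, so $G-ab$ has at most as many hamiltonian cycles as $G$, i.e.\ at most $k$. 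By the induction hypothesis a copy $H$ of $G-ab$ is generated; applying the edge-insertion expansion to the orbit representative corresponding to $\{a,b\}$ yields a graph isomorphic to $G$ whose inserted edge lies in the same orbit as the image of the canonical edge $ab$, and is therefore itself canonical and accepted---verbatim as in Theorem~\ref{thm:proof_complete}.

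For the no-duplicates direction I would take two isomorphic class members $G_1, G_2$ with $m+1$ edges, each accepted, and let $e_i = a_ib_i$ be the canonical edge added last when generating $G_i$. Because both edges are canonical, an isomorphism $\gamma\colon G_1 \to G_2$ composed with a suitable automorphism of $G_2$ carries $e_1$ to $e_2$ and hence restricts to an isomorphism $G_1 - e_1 \to G_2 - e_2$; the induction hypothesis then forces these reduced graphs to coincide, so $\{a_1,b_1\}$ and $\{a_2,b_2\}$ lie in one orbit, contradicting the rule that only one representative per orbit is expanded. Here it is worth remarking that the omission of the invariants $x_2, x_3, x_4$ is immaterial to correctness: a canonical edge is still well defined as the reducible edge maximising the (now shorter) invariant tuple, and the correctness-bearing components $x_7, x_8$ remain isomorphism-equivariant, so the argument of Theorem~\ref{thm:proof_complete} carries over unchanged.

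The main obstacle---and the one point genuinely distinguishing this proof from that of Theorem~\ref{thm:proof_complete}---is verifying that the target class is closed under the reduction operation, i.e.\ that deleting a reducible edge from a graph with at most $k$ hamiltonian cycles again yields such a graph. This rests on the monotonicity of the hamiltonian-cycle count under edge deletion noted above, which is exactly why the theorem is phrased with ``at most $k$'' rather than ``exactly $k$'': the latter class is not closed under reduction, and this is precisely why the algorithm generates the ``at most'' family and only afterwards filters by an explicit count.
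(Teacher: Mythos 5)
Your proof is correct and follows exactly the route the paper intends: the authors omit the proof, stating only that it is ``analogous to the proof of Theorem~\ref{thm:proof_complete}'', and your write-up is precisely that analogy carried out, with the one genuinely new ingredient---closure of the class under deletion of a reducible edge, via monotonicity of the hamiltonian-cycle count---correctly identified and justified. Your remarks on the redefined reducibility, the omitted invariants $x_2,x_3,x_4$, and why ``at most $k$'' rather than ``exactly $k$'' is the right class all match the paper's own discussion.
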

%\begin{proof}
% \jg{Of stelling + bewijs gewoon schrappen?}
%The proof that at most one representative of every isomorphism class of hamiltonian graphs of order $n$ with at most $k$ hamiltonian cycles is accepted is completely analogous to the proof of Theorem~\ref{thm:proof_complete}. So we now prove that at least one representative of every isomorphism class is accepted.
%
%Assume by induction on the number of edges $m$ that every hamiltonian graph of order $n$ with at most $k$ hamiltonian cycles is generated and accepted by the algorithm. Given a hamiltonian graph of order $n$ with at most $k$ hamiltonian cycles with $m+1$ edges and let $e$ be the canonical edge. By definition $G-e$ is hamiltonian and clearly $G-e$ has at most $k$ hamiltonian cycles. So by induction $G-e$ was generated by the algorithm and the algorithm inserted an edge between XXXXXXXXXXXX. TODO: of the rest of the proof is completely analougous to the proof of Theorem~\ref{thm:proof_complete}?
%\end{proof}

\subsubsection{Generation of non-hamiltonian graphs}

As in Section~\ref{subsect:gen_k_hc}, Algorithm~\ref{algo:generate_uhg} can be adapted to generate non-hamiltonian graphs of order $n$ efficiently. To this end, one has to start the algorithm from a graph consisting of $n$ isolated vertices (instead of a cycle of order $n$). Now every edge is reducible when performing the generation. One can also adjust this for a specialised algorithm for only generating connected non-hamiltonian graphs by starting the generation from all trees on $n$ vertices and defining an edge of a connected non-hamiltonian graph $G$ to be reducible if and only if $G-e$ is connected. However, as most non-hamiltonian graphs are connected, this will not be much faster than generating all non-hamiltonian graphs and filtering the connected graphs. (For example: more than 90\,\% of the non-hamiltonian graphs on 11 vertices are connected, and with increasing order this ratio increases as well.)
%11: 90.95; 12: 91.79

As in Section~\ref{subsect:gen_k_hc}, the invariants $x_2$, $x_3$ and $x_4$ are omitted as they rely on the fact that the graph is uniquely hamiltonian. However, the discriminating power of the remaining invariants is still sufficient to avoid the more expensive computation of $x_7$ and $x_8$ in most cases. (For example: when generating non-hamiltonian graphs of order 11, this can be avoided in 91.2\,\% of the cases.)

%\subsubsection{Other restrictions}

We close this section by mentioning that, since the algorithm only adds edges and never removes any edges, it is straightforward to extend it and restrict the generation to graphs with a given lower bound on the girth, planar graphs, graphs with an upper bound on the maximum degree, and various other properties.

%\jg{TODO: dit is een heel korte sectie. Moet dit iets uitgebreider verwoord worden? Of anders elders vermelden?}

\section{Results}
\label{sect:results}

\subsection{Uniquely hamiltonian graphs}
\label{subsect:results_uhg}

For a graph $G$, we shall denote by $h(G)$ the number of hamiltonian cycles it contains.

\subsubsection{Nearly cubic uniquely hamiltonian graphs}

Cubic uniquely hamiltonian graphs do not exist by Smith's theorem. Following Entringer and Swart~\cite{entringer1980spanning}, we call an $n$-vertex graph \emph{nearly cubic} if exactly $n - 2$ of its vertices are cubic, while the remaining two vertices are of degree~4. A uniquely hamiltonian graph contains at least two vertices of even degree, as we will show in the following lemma which essentially belongs to Thomason:

\begin{lemma}\label{Thom-lemma}
A graph $G$ with $h(G) \in \{ 1,2 \}$ contains at least $3 - h(G)$ vertices of even degree.
\end{lemma}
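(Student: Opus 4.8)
The plan is to deduce everything from a single mod-$2$ counting identity obtained by Thomason's lollipop technique, and then to apply it to a carefully chosen edge. Write $S$ for the set of vertices of even degree, and for an edge $e$ let $h_e$ denote the number of hamiltonian cycles through $e$. I first record the easy half: in both cases there is an edge lying in an odd number of hamiltonian cycles. If $h(G)=1$ this holds for every edge of the unique hamiltonian cycle; if $h(G)=2$ the two cycles are distinct, so some edge lies in exactly one of them, hence in exactly one hamiltonian cycle. As we will see, the identity forces $S\neq\varnothing$ whenever such an edge exists, which already yields the bound $3-h(G)$ when $h(G)=2$ and gives $|S|\geq 1$ when $h(G)=1$.

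The key tool is the following refinement of Thomason's result. Fix an edge $uv$ and consider the set $\mathcal P$ of hamiltonian paths that begin $u,v,\dots$ (those having $u$ as an endpoint and $uv$ as their first edge); write $\mathrm{end}(P)$ for the other endpoint of $P\in\mathcal P$. Then
\begin{equation*}
h_{uv}\ \equiv\ \#\{\,P\in\mathcal P:\ \mathrm{end}(P)\in S\,\}\pmod 2.
\end{equation*}
To prove this I would build the ``lollipop'' graph $R$ whose vertices are the paths in $\mathcal P$ together with the hamiltonian cycles through $uv$, joining $P$ to each path obtained from $P$ by a Pósa rotation at its free end $t=\mathrm{end}(P)$ (delete an edge $xt'$ of $P$ and add $xt$ for a neighbour $x\neq t'$ of $t$), and joining each path $u,v,\dots,t$ with $t\sim u$ to the hamiltonian cycle $u,v,\dots,t,u$ it completes. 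A short case check then shows that each cycle through $uv$ is a degree-one vertex of $R$, while a path with free end $t$ has $R$-degree exactly $\deg_G(t)-1$; hence the odd-degree vertices of $R$ are precisely the cycles through $uv$ together with the paths whose free end lies in $S$, and the identity follows from the handshake lemma applied to $R$. Specialising to $S=\varnothing$ recovers Thomason's theorem, and in particular the existence of an edge with $h_e$ odd forces the right-hand side, and hence $S$, to be nonempty.

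It remains to produce a second even-degree vertex when $h(G)=1$. Suppose for contradiction that $S=\{w\}$ is a singleton. Since $G$ is hamiltonian, $w$ lies on the unique hamiltonian cycle, so it has a neighbour $a$ with $aw$ an edge of that cycle, whence $h_{aw}=1$. Now apply the identity to the edge $uv=aw$ with $u=a$ and $v=w$. Every $P\in\mathcal P$ has the form $a,w,\dots,t$, so its free end $t$ avoids both $a$ and $w$; as $w$ is the only even-degree vertex, $\mathrm{end}(P)\notin S$ for all $P$ and the right-hand side vanishes. This gives $h_{aw}\equiv 0\pmod 2$, contradicting $h_{aw}=1$. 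Hence $|S|\geq 2$, completing the case $h(G)=1$.

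The hard part is the honest verification of the degree count in the lollipop graph $R$: checking that pinning the first edge $uv$ makes the rotation relation symmetric, that rotations keep the path inside $\mathcal P$, and that the only non-rotation (terminal) moves are the closures producing hamiltonian cycles through $uv$, each counted once. This is exactly Thomason's argument, which is why the lemma is attributed to him; once the identity is established the two applications are immediate, the decisive trick being to place the unique even-degree vertex in the pinned position $v$ so that no free end can have even degree.
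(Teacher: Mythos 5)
Your proof is correct, but it takes a genuinely different route from the paper's. The paper uses Thomason's theorem purely as a black box (a hamiltonian graph with all degrees odd has at least three hamiltonian cycles): this immediately yields one even-degree vertex, and to extract a second one in the uniquely hamiltonian case it performs surgery --- repeatedly deleting edges off the hamiltonian cycle at the unique even-degree vertex until that vertex has degree~$2$, then gluing two copies of the vertex-deleted graph along the two neighbours to manufacture an all-odd-degree uniquely hamiltonian graph, contradicting Thomason. You instead open up Thomason's lollipop proof and extract the sharper congruence $h_{uv}\equiv\#\{P\in\mathcal P:\mathrm{end}(P)\in S\}\pmod 2$, applied twice; the decisive step of pinning the would-be unique even-degree vertex $w$ in the second position of the rooted paths, so that no free end can lie in $S$, disposes of the $|S|=1$ case in two lines. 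Your degree counts in the auxiliary graph $R$ (each hamiltonian cycle through $uv$ has degree~$1$; a path with free end $t$ has degree $\deg_G(t)-1$, with the neighbour $u$ contributing a closure rather than a rotation) are the standard ones and are correct, and the rotations indeed preserve the pinned first edge $uv$. What each approach buys: the paper's argument is shorter to write given Thomason's theorem as a citation, at the cost of verifying that the edge deletions and the doubling preserve unique hamiltonicity; yours is self-contained modulo the lollipop verification, avoids all surgery, and the refined identity is reusable (it is essentially the form in which Thomason's lemma is actually proved). One notational slip worth fixing: in describing the rotation you write ``delete an edge $xt'$'' after having used $t'$ for the predecessor of $t$; you mean the edge joining $x$ to its successor on $P$. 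This does not affect correctness.
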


\begin{proof}
Thomason~\cite{Th78} showed that in a graph containing only vertices of odd degree, every edge is contained in an even number of hamiltonian cycles. Therefore, a hamiltonian graph containing only vertices of odd degree has at least three hamiltonian cycles. So in a graph with exactly one or two hamiltonian cycles there must be at least one vertex of even degree. Suppose there exists a uniquely hamiltonian graph $G$ with exactly one vertex $u$ of even degree. If the degree of $u$ is not 2 (and thus at least 4), removing from $G$ an edge $uv$ which does not lie on the hamiltonian cycle of $G$ yields a uniquely hamiltonian graph with all vertices of odd degree except for $v$. We iterate this procedure until the degree of the vertex of even degree, which we call $w$, is 2, and we denote the neighbours of $w$ by $w'$ and $w''$. Let $G_1$ and $G_2$ be disjoint copies of $G - w$, and $w'_i$ and $w''_i$ the respective copies of $w'$ and $w''$. Then $G_1 \cup G_2$ to which we add the edges $w'_1w'_2$ and $w''_1w''_2$ is a uniquely hamiltonian graph in which all vertices have odd degree. However, this contradicts Thomason's theorem mentioned in the beginning of this proof. Thus, a uniquely hamiltonian graph must contain at least two vertices of even degree.
\end{proof}

By Euler's degree sum formula, every nearly cubic graph has even order. Entringer and Swart~\cite{entringer1980spanning} showed that for all even $n \ge 22$ there exists a nearly cubic uniquely hamiltonian graph of order $n$. Recently, Royle presented a nearly cubic uniquely hamiltonian graph on 18 vertices~\cite{Ro17}, noting that this is the smallest such graph.
By modifying our algorithm from Section~\ref{section:generation_algo} for nearly cubic graphs, we verified this independently, addressed the last remaining open case, and determined the exact counts of such graphs for the smallest orders for which they occur:

\begin{theorem}\label{Th-nc}
There exists a nearly cubic uniquely hamiltonian graph of order $n$ if and only if $n$ is even and $n \ge 18$. Royle's graph has girth~$5$ and is the only nearly cubic uniquely hamiltonian graph on $18$~vertices. Furthermore, there are exactly $20$ nearly cubic uniquely hamiltonian graphs of order~$20$, $337$ of order~$22$, and $4592$ of order~$24$. Finally, both the smallest nearly cubic uniquely hamiltonian graph of girth~$3$ as well as of girth~$4$ have order~$20$.
\end{theorem}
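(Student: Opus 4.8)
The plan is to separate the statement into a theoretical skeleton and a computational core. The evenness of the order is already settled: by Euler's degree sum formula every nearly cubic graph has even order, so only even $n$ need be considered. For the existence half of the equivalence at large orders I would simply invoke Entringer and Swart~\cite{entringer1980spanning}, who construct a nearly cubic uniquely hamiltonian graph for every even $n \ge 22$. Everything else---the nonexistence for even $n \le 16$, existence and uniqueness at $n=18$, existence at $n=20$, and the exact counts for $n \in \{20,22,24\}$---I would obtain from an exhaustive, isomorphism-free generation.

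For the generation I would specialise the algorithm of Section~\ref{section:generation_algo} by bounding the maximum degree: as noted at the end of that section, since the algorithm only ever inserts edges, imposing $\Delta(G) \le 4$ throughout is immediate and prunes the search drastically. Running this variant produces, for each even $n$, exactly one representative of every isomorphism class of uniquely hamiltonian graphs with $\Delta \le 4$; filtering for the degree sequence with precisely two $4$-valent vertices and all others cubic isolates the nearly cubic ones. The completeness of this procedure follows verbatim from the argument of Theorem~\ref{thm:proof_complete}: in any nearly cubic uniquely hamiltonian graph $G$ every chord (i.e.\ every edge off the unique hamiltonian cycle ${\mathfrak h}$) is reducible, and deleting a chord neither raises the maximum degree nor destroys unique hamiltonicity, so $G$ can be reduced edge by edge down to the $n$-cycle from which the algorithm starts.

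With the generated data in hand the remaining assertions read off directly. For even $n \le 16$ the search returns no graph, giving the lower bound $n \ge 18$; at $n=18$ it returns a single graph, which I would confirm to be Royle's graph~\cite{Ro17} and to have girth $5$; and at $n=20$ it returns $20$ graphs, so combined with Entringer--Swart this completes the equivalence. The counts $20$, $337$, $4592$ for $n=20,22,24$ are the sizes of the respective filtered output. For the final claim, the unique order-$18$ graph having girth $5$ shows that no nearly cubic uniquely hamiltonian graph of girth $3$ or $4$ exists below order $20$; inspecting the twenty order-$20$ graphs and exhibiting among them one of girth $3$ and one of girth $4$ then pins the smallest order for each of these two girths at $20$.

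The main obstacle is not mathematical but computational: trusting that the degree-restricted generation is both complete and duplicate-free, and that it is run far enough. The completeness argument above addresses the former at the level of proof, but the decisive safeguard is independent corroboration---rerunning the smallest orders with the naive pipeline (generate all graphs with \textit{geng}, count hamiltonian cycles, filter), and checking that the nearly cubic uniquely hamiltonian counts agree. This cross-validation, together with the pruning that makes orders up to $24$ feasible, is where the real work lies.
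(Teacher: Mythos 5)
Your proposal matches the paper's argument: evenness via Euler's degree sum formula, existence for even $n \ge 22$ via Entringer--Swart, and everything else (nonexistence below $18$, uniqueness at $18$, the case $n=20$, the exact counts, and the girth claims) by an exhaustive isomorphism-free run of the degree-restricted generation algorithm, cross-checked independently. The paper presents this theorem exactly as such a computational result built on the same two citations, so there is nothing to add.
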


Royle's nearly cubic uniquely hamiltonian graph on 18~vertices is shown in Figure~\ref{fig:nearly_cubic_18v_g5}. There is precisely one nearly cubic uniquely hamiltonian graph of girth~4 on 20~vertices and it is shown in Figure~\ref{fig:nearly_cubic_20v_g4}. There are exactly 17 such graphs of girth~3 on 20~vertices, one of which is shown in Figure~\ref{fig:nearly_cubic_20v_g3}.
The nearly cubic graphs up to 24 vertices can also be downloaded from the \textit{House of Graphs}~\cite{hog} at \url{http://hog.grinvin.org/UHG}.

\begin{figure}[h!tb]
    \centering
   \subfloat[]{\label{fig:nearly_cubic_18v_g5}\includegraphics[width=0.25\textwidth]{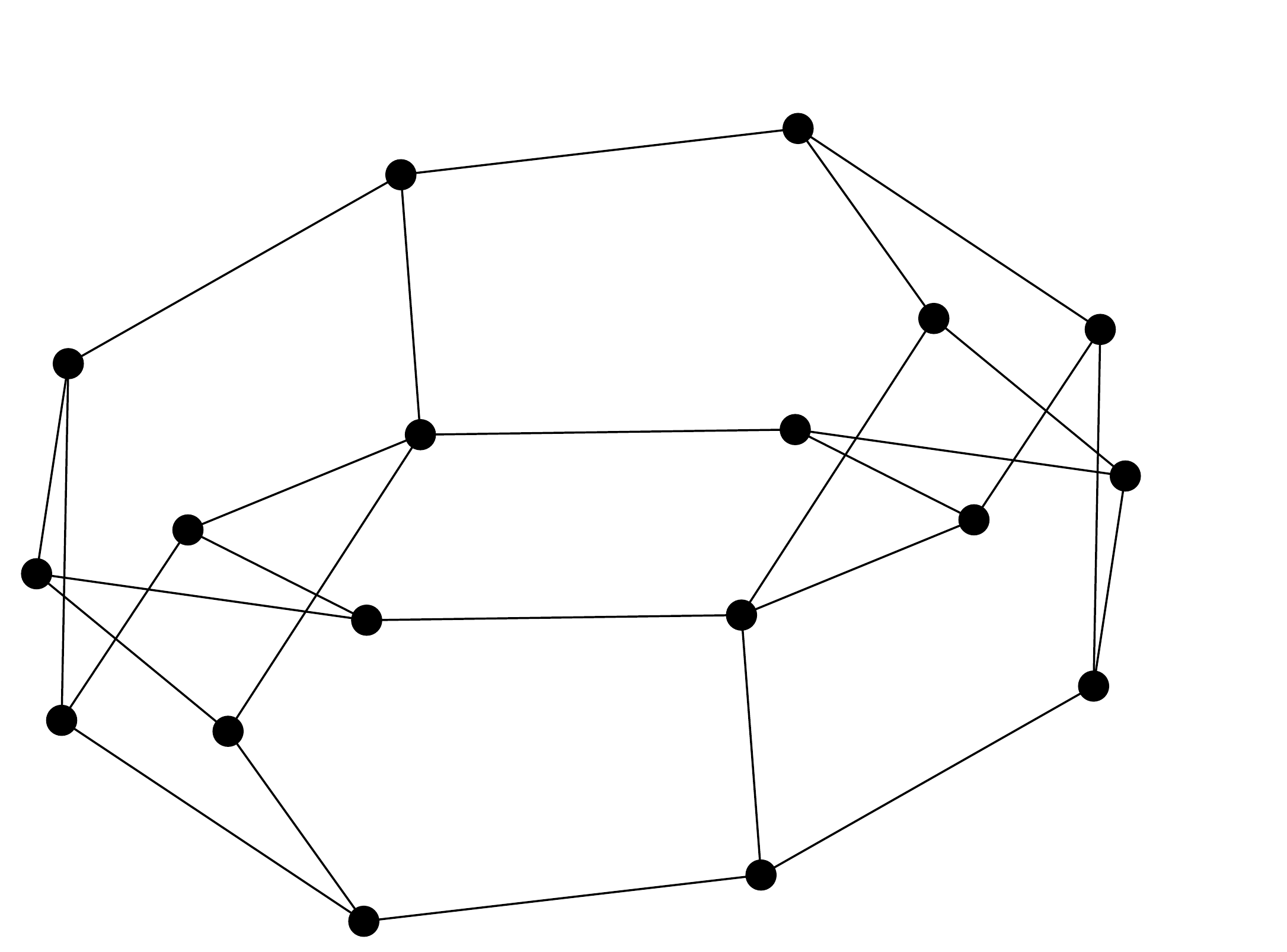}} \qquad
    \subfloat[]{\label{fig:nearly_cubic_20v_g4}\includegraphics[width=0.26\textwidth]{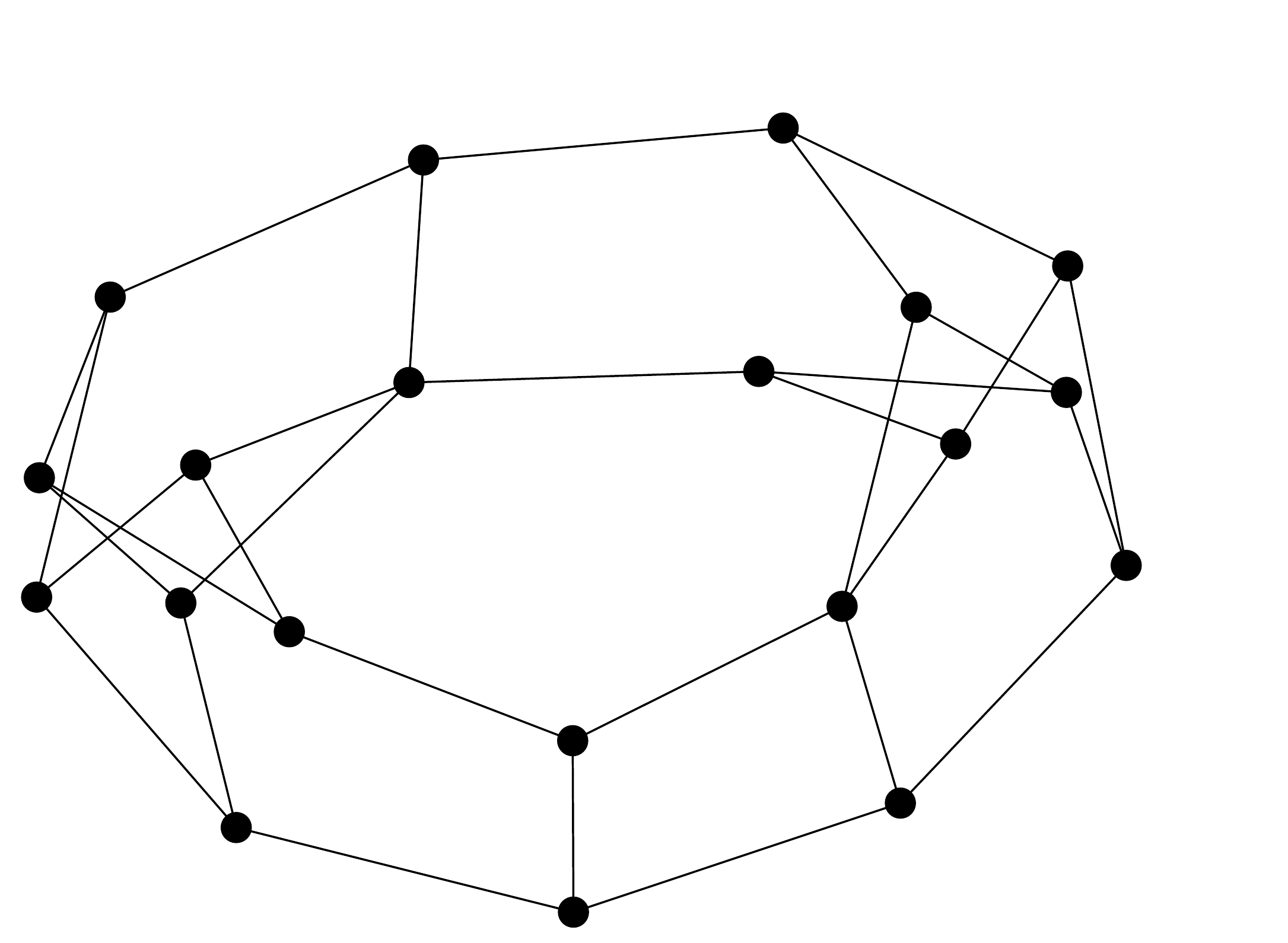}} \qquad
    \subfloat[]{\label{fig:nearly_cubic_20v_g3}\includegraphics[width=0.30\textwidth]{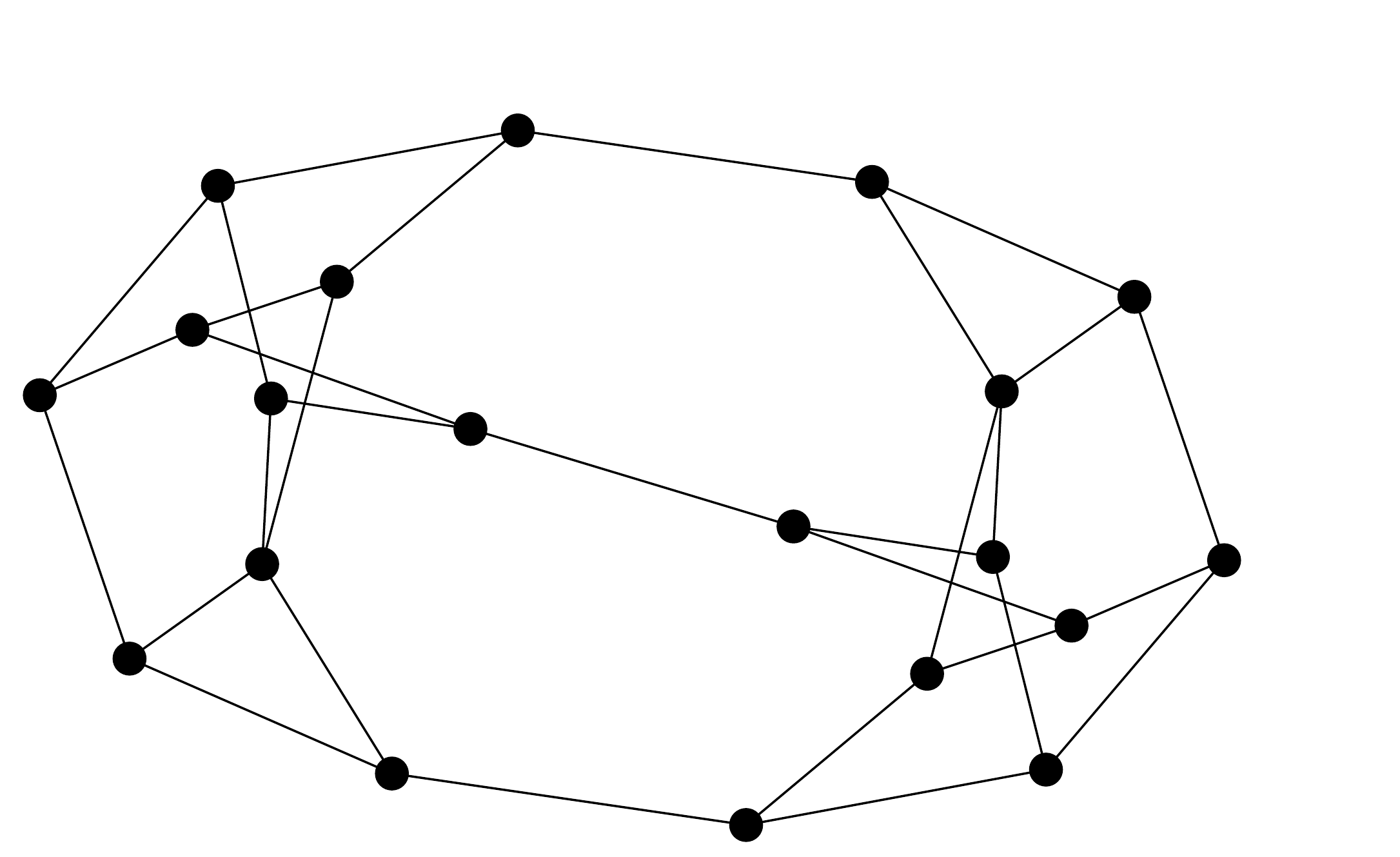}}

    \caption{The smallest nearly cubic uniquely hamiltonian graphs of (a) girth~5, due to Royle, (b)~girth~4, and (c) girth~3. Their orders are 18, 20, and 20, respectively.}
\end{figure}

Our focus on girth stems from a question of Fleischner~\cite[p.~176]{fleischner2014uniquely}, who asked whether there exist uniquely hamiltonian graphs without 2-valent vertices and of girth $>3$. Entringer and Swart's aforementioned approach~\cite{entringer1980spanning} yields graphs containing exactly two triangles. Seamone provides in~\cite{seamone2015uniquely} a method to construct nearly cubic triangle-free uniquely hamiltonian graphs, thereby giving an affirmative answer to Fleischner's question. However, he does not discuss concrete examples, in particular small ones. The graph mentioned above due to Royle~\cite{Ro17} provides such a concrete example and Royle showed that there is no smaller uniquely hamiltonian graph of minimum degree at least~3. Theorem~\ref{Th-nc} expands on this.

\subsubsection{The Bondy-Jackson conjecture}

\bigskip

\noindent \textbf{Conjecture} (Bondy and Jackson~\cite{bondy1998vertices}). \emph{Every planar uniquely hamiltonian graph contains at least two vertices of degree~$2$.} \hfill $({\mathfrak B})$

\bigskip

Using our implementation of Algorithm~\ref{algo:generate_uhg} from Section~\ref{section:generation_algo}, we generated all planar uniquely hamiltonian graphs with girth at least 3, 4, and 5 up to certain orders, see Table~\ref{table:counts_planar_uhg}. (We used Boyer and Myrvold's algorithm~\cite{boyer2004cutting} to test if a graph is planar.)

\begin{table}[h!tb]
	\centering
	\small
	\begin{tabular}{crrr}
		\toprule	
		Order &  \#\,planar UH graphs &  girth $\geq 4$ & girth $\geq 5$ \\
		\midrule
		3 & 1 & 0 & 0 \\
		4 & 2 & 1 & 0 \\
		5 & 3 & 1 & 1 \\
		6 & 12 & 2 & 1 \\
		7 & 49 & 3 & 1  \\
		8 & 460 & 11 & 3 \\
		9 & 4 994 & 33 & 4 \\
		10 & 68 234 & 178 & 8 \\
		11 & 997 486 & 1 011 & 23 \\
		12 & 15 582 567 & 6 816 & 91 \\
		13 & 253 005 521 & 47 669 & 317 \\
		14 & 4 250 680 376 & 352 901 & 1 353 \\
		15 & 73 293 572 869 & 2 680 512 & 6 473 \\
		16 & 1 293 638 724 177 & 20 939 433 & 30 834 \\
		17 & ? & 166 713 951 & 148 907 \\	
		18 & ? & 1 352 143 860 & 768 178 \\
		19 & ? & 11 129 922 982 &  3 987 517 \\
		20 & ? & ? & 20 767 030 \\
		21 & ? & ? & 110 819 167 \\
		22 & ? & ? & 599 311 836 \\
		23 & ? & ? & 3 256 610 004 \\	
		\bottomrule		
	\end{tabular}
	\caption{The number of all planar uniquely hamiltonian graphs, all such graphs with girth at least~4, and all such graphs with girth at least~5, respectively. Every graph in this table contains at least two 2-valent vertices.}
	\label{table:counts_planar_uhg}
\end{table}

While performing these computations, we verified that none of the generated graphs is a counterexample to $({\mathfrak B})$. That is:

\begin{observation}
The conjecture of Bondy and Jackson $({\mathfrak B})$ is true for graphs up to order~$16$, for graphs of girth at least~$4$ up to order~$19$, and for graphs of girth at least~$5$ up to order~$23$.
\end{observation}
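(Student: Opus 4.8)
The "Observation" is purely computational—it claims that Conjecture $(\mathfrak{B})$ holds up to certain orders, verified by exhaustive generation. This isn't a theorem with a mathematical proof; it's a statement that follows directly from running the generation algorithm and checking a property on all outputs.

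Let me think about what kind of "proof" could be written here.The plan is to recognise that this Observation is not a theorem admitting an independent mathematical argument, but rather a verification claim whose justification is computational. Accordingly, the ``proof'' I would write is a certification that the exhaustive generation procedure of Section~\ref{section:generation_algo} was applied to the relevant families and that the degree-$2$ condition of $(\mathfrak B)$ was checked on every graph produced. First I would invoke Theorem~\ref{thm:proof_complete} (and its stated generalisation to the planar, girth-restricted setting, which follows since the generation algorithm only adds edges and hence respects both planarity and any lower bound on the girth, as noted at the close of Section~\ref{subsect:gen_extensions}): this guarantees that for each order $n$ and each girth bound $g \in \{3,4,5\}$ listed in Table~\ref{table:counts_planar_uhg}, the algorithm outputs \emph{exactly one} representative of every isomorphism class of planar uniquely hamiltonian graphs of order $n$ and girth at least $g$. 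Completeness of the enumeration is therefore reduced to the already-proved correctness of the algorithm.

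Next I would describe the filtering step. For each generated graph $G$, one computes its degree sequence and counts the vertices of degree~$2$; the graph is a potential counterexample to $(\mathfrak B)$ precisely when this count is at most~$1$. I would state that this check was carried out on every graph enumerated in the three columns of Table~\ref{table:counts_planar_uhg}, and that in every instance at least two $2$-valent vertices were found. Thus no counterexample to $(\mathfrak B)$ occurs among planar uniquely hamiltonian graphs up to order~$16$ in general, up to order~$19$ under girth~$\ge 4$, and up to order~$23$ under girth~$\ge 5$, which is exactly the content of the Observation. The counts themselves serve as a reproducibility check: an independent regeneration must match the tabulated numbers.

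The chief concern here is not mathematical difficulty but \emph{trustworthiness of the computation}, and this is where I would direct the reader to the correctness testing discussed in Section~\ref{sect:comput_results}. The main ``obstacle'' is ensuring that the enumeration is genuinely exhaustive and isomorph-free at these large orders---for instance, order~$16$ involves more than $1.29 \times 10^{12}$ planar uniquely hamiltonian graphs---so that the absence of a counterexample reflects a true property of the class rather than a gap in the search. I would mitigate this by cross-validating the generation against an independent pipeline (namely \textit{geng} followed by a separate hamiltonian-cycle counter and planarity filter) on the smaller orders where both approaches are feasible, and by confirming that the tallies agree with Table~\ref{table:counts_planar_uhg}. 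Since the statement is an ``Observation'' rather than a ``Theorem'', the appropriate presentation is a short paragraph recording these facts rather than a formal \texttt{proof} environment; I would therefore omit a boxed proof entirely and let the surrounding text, together with the pointer to Section~\ref{sect:comput_results}, stand as the justification.
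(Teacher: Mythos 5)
Your proposal matches the paper's justification: the Observation is established purely computationally by generating all planar uniquely hamiltonian graphs (with the respective girth restrictions) up to the stated orders via the algorithm of Section~\ref{section:generation_algo}, whose completeness rests on Theorem~\ref{thm:proof_complete} and the remark that planarity and girth bounds can be imposed during generation, and then checking that every graph in Table~\ref{table:counts_planar_uhg} has at least two vertices of degree~2. This is essentially the same approach as the paper, including the reliance on the cross-validation described in Section~\ref{sect:comput_results}.
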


If we relax the planarity condition in $({\mathfrak B})$ to ``having genus~1", we can give a counterexample on only 11~vertices---it is shown in Figure~\ref{fig:toric_UHG}. More specifically, we determined that there are exactly two uniquely hamiltonian graphs with at most one vertex of degree~2 on 11~vertices, exactly 20 on 12~vertices, none on smaller orders, and that all of these 22~examples have genus~1. Furthermore, the smallest toric counterexample of girth~4 (girth~5) has order~13 (order~14). Using the aforementioned findings and observing that we can replace cubic vertices with triangles iteratively without altering neither the number of hamiltonian cycles nor the number of 2-valent vertices, nor the genus, we obtain the following result.

\begin{observation}
There exists an $n$-vertex toric uniquely hamiltonian graph containing at most one $2$-valent vertex if and only if $n \ge 11$.
\end{observation}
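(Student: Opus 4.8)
The plan is to treat the two directions separately, with the ``only if'' part handed to us by computation and the ``if'' part carried entirely by one local operation. Necessity is immediate from the census reported just above: there is no uniquely hamiltonian graph with at most one $2$-valent vertex on fewer than $11$ vertices (of any genus), hence certainly none that is toric, so the existence of such a graph forces $n \ge 11$. For sufficiency I would exhibit, for every $n \ge 11$, a toric uniquely hamiltonian graph with at most one $2$-valent vertex, built from small explicit examples by repeatedly applying the vertex-to-triangle expansion.

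The engine is the expansion $T$ that realises ``replacing a cubic vertex with a triangle'': given a cubic vertex $v$ with neighbours $a,b,c$, delete $v$, introduce a triangle on new vertices $v_1,v_2,v_3$, and add the edges $v_1a$, $v_2b$, $v_3c$; the result $G' = T(G,v)$ has exactly two more vertices than $G$. I would then record three invariances. First, $h(G') = h(G)$: every hamiltonian cycle of $G$ uses exactly two of the three edges at $v$, and this pair corresponds bijectively to a hamiltonian cycle of $G'$, obtained by using the two matching external triangle-edges together with the unique path through all of $v_1,v_2,v_3$ joining their endpoints; hence $G$ is uniquely hamiltonian if and only if $G'$ is. Second, the number of $2$-valent vertices is unchanged, since $v$ and each $v_i$ have degree $3$ while $a,b,c$ retain their degrees. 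Third, the genus is unchanged: $G$ is the minor of $G'$ obtained by contracting the three triangle edges, and contracting an edge of an embedded graph yields an embedding on the same surface, so $\mathrm{genus}(G)\le\mathrm{genus}(G')$; conversely any embedding of $G$ extends to one of $G'$ on the same surface by drawing the triangle inside a small disc around $v$ while respecting the rotation at $v$, giving $\mathrm{genus}(G')\le\mathrm{genus}(G)$. In particular $T$ preserves being toric, and crucially $v_1,v_2,v_3$ are again cubic, so $T$ can always be reapplied.

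With $T$ in hand I would take as base cases one $11$-vertex and one $12$-vertex toric uniquely hamiltonian graph with at most one $2$-valent vertex, which exist by the census (an $11$-vertex one being the graph of Figure~\ref{fig:toric_UHG}), each containing at least one cubic vertex. Applying $T$ repeatedly to the $11$-vertex graph produces such graphs on all \emph{odd} orders $n\ge 11$, and applying it to the $12$-vertex graph produces them on all \emph{even} orders $n\ge 12$; together with the two base cases this covers every $n\ge 11$, which completes the induction and the proof.

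I expect the genus invariance to be the main obstacle: the ``$\le$'' direction relies on minor-monotonicity of genus (contracting the triangle returns $G$), while the ``$\ge$'' direction relies on the embedding-extension argument, and both are needed to guarantee that the genus stays exactly $1$ rather than collapsing to $0$ or rising. A secondary point to nail down is that at least one base graph of each parity has a cubic vertex, since this is exactly what makes $T$ iterable; this is verified directly on the explicit small examples.
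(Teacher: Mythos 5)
Your proof is correct and takes essentially the same route as the paper: the computational census supplies both the necessity (no uniquely hamiltonian graph with at most one $2$-valent vertex exists below order $11$, of any genus) and the toric base cases on $11$ and $12$ vertices, and iterated replacement of a cubic vertex by a triangle --- preserving the number of hamiltonian cycles, the number of $2$-valent vertices, and the genus --- covers all larger orders. Your write-up merely makes explicit the invariance checks that the paper states without proof.
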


\begin{figure}[!htb]
\begin{center}
 \includegraphics[width=0.3\textwidth]{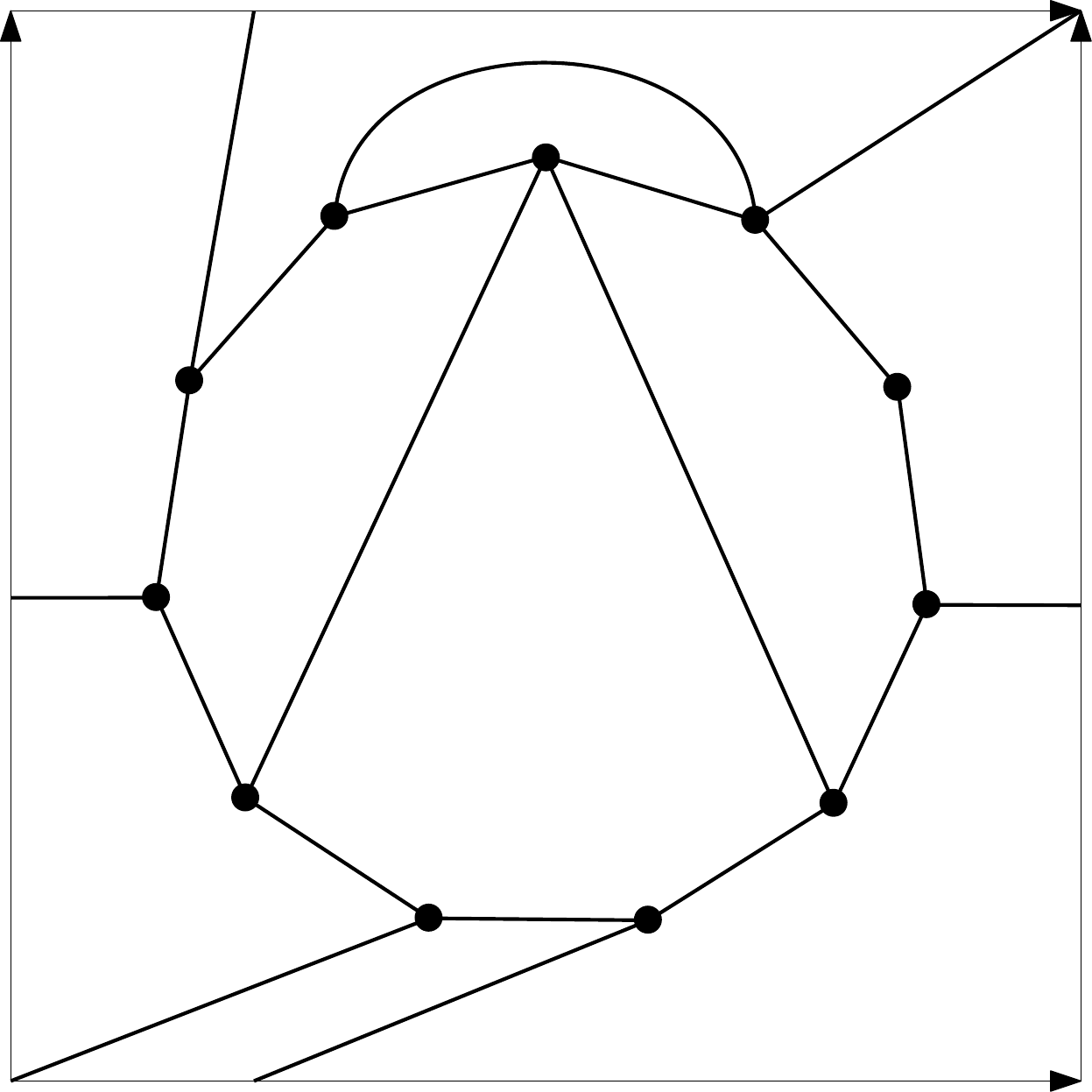}
	\caption{A toric uniquely hamiltonian graph containing exactly one vertex of degree~2. Its order is 11. There are no smaller uniquely hamiltonian graphs containing at most one 2-valent vertex, irrespective of genus.}
	%point group
	\label{fig:toric_UHG}
\end{center}
\end{figure}

%\jg{Eventueel ook figuur van de 2 grafen met 11 toppen toevoegen (min crossing number tekening)? + Ook opmerken dat door van die met 11 en 12 toppen te vertrekken en driehoeken op te blazen je alle ordes kunt afdekken?}

%\jg{Ter informatie (wellicht niet vermelden?): kleinste tegenvoorbeeld genus 1 met girth 4: 13 toppen (5 grafen); kleinste tegenvoorbeeld met girth 5: 14 toppen (5 grafen).}

While we were able to find uniquely hamiltonian graphs of minimum degree~3 on the double torus---one such graph of order~18 can be constructed from the graph depicted in Figure~\ref{fig:toric_UHG}---it remains an open question to establish the existence of a toric uniquely hamiltonian graph of minimum degree at least~3.

What if we replace ``uniquely hamiltonian graph'' by ``containing exactly two hamiltonian cycles'' in the Bondy-Jackson conjecture? By subdividing an arbitrary edge once in a planar cubic graph with exactly three hamiltonian cycles---discussed in detail in Section~\ref{subsect:results_thg}---, we obtain infinitely many planar graphs with exactly two hamiltonian cycles having precisely one 2-valent vertex. The smallest such graph is $K_4$ with one subdivided edge. The next theorem addresses, among other things, the situation for minimum degree at least~3.

\begin{theorem}
The following statements are equivalent.
\begin{enumerate}
\item There exists a counterexample to $({\mathfrak B})$ of minimum degree at least~$3$.
\item There exists a counterexample to $({\mathfrak B})$ containing exactly one vertex of degree~$2$.
\item There exist infinitely many counterexamples to $({\mathfrak B})$.
\item There exists a planar graph with exactly two hamiltonian cycles and minimum degree at least~$3$.
\item There exists a planar non-hamiltonian $n$-vertex graph with exactly one $(n-1)$-cycle, no vertex of degree~$0$ or $1$ and at most one vertex of degree~$2$.
\end{enumerate}
\end{theorem}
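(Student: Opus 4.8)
The plan is to show that every one of the five statements is equivalent to the existence of a \emph{planar uniquely hamiltonian graph of minimum degree at least $3$} (statement 1), and to pass between the statements using three local operations whose effect on the number of hamiltonian cycles is easy to control: subdividing an edge, suppressing a $2$-valent vertex, and substituting a small gadget for a vertex. The bookkeeping I would use throughout is that subdividing an edge $e$ yields a graph whose hamiltonian cycles are exactly the lifts of the hamiltonian cycles of the original that run through $e$; in particular, subdividing an edge lying on a hamiltonian cycle keeps the graph hamiltonian, whereas subdividing a \emph{chord} (an edge on no hamiltonian cycle) makes it non-hamiltonian.

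For $1\Leftrightarrow 2$: given a graph as in (1), I would subdivide one edge of its hamiltonian cycle, creating exactly one $2$-valent vertex while preserving planarity and unique Hamiltonicity, which gives (2). Conversely, given a graph $G$ as in (2) with $2$-valent vertex $w$ and neighbours $w',w''$, I would apply the doubling construction already used in the proof of Lemma~\ref{Thom-lemma}: take two copies of $G-w$ and add the edges $w_1'w_2'$ and $w_1''w_2''$. Since $w',w''$ lie on the face vacated by $w$ the result is planar, it has minimum degree $3$, and it is uniquely hamiltonian because each copy must be traversed by the unique hamiltonian $w'$--$w''$ path of $G-w$ and the two joining edges then force a single way to close up. For $1\Leftrightarrow 3$ the implication $3\Rightarrow 1$ is immediate (any counterexample to $({\mathfrak B})$ can be converted, via $2\Rightarrow 1$, into one of minimum degree $3$), while for $1\Rightarrow 3$ I would iterate the operation of replacing a cubic vertex by a triangle. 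By the Bondy--Jackson degree theorem quoted in the Introduction a planar uniquely hamiltonian graph has at least two vertices of degree $2$ or $3$, so in the absence of $2$-valent vertices a cubic vertex is always available; as noted in the Introduction this operation changes neither the number of hamiltonian cycles nor the number of $2$-valent vertices nor the genus, and it strictly increases the order, producing infinitely many examples.

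For $2\Leftrightarrow 4$: to obtain $4\Rightarrow 2$ I would subdivide an edge lying on exactly one of the two hamiltonian cycles (such an edge exists because the two cycles differ); precisely one cycle lifts, producing a uniquely hamiltonian graph with a single $2$-valent vertex. For $2\Rightarrow 4$ I would replace the unique $2$-valent vertex $w$ by a copy of $K_4$ minus an edge, attaching its two degree-$2$ corners to $w'$ and $w''$. This gadget has exactly two hamiltonian paths between those corners, so composing with the unique hamiltonian $w'$--$w''$ path of $G-w$ changes $h$ from $1$ to $2$, while planarity and minimum degree $3$ are retained. Finally I would tie in (5). For $5\Rightarrow 1$, if $D$ is the unique $(n-1)$-cycle and $z_0$ the vertex it avoids, then $F-z_0$ is uniquely hamiltonian (its hamiltonian cycles are exactly the $(n-1)$-cycles of $F$ missing $z_0$, of which there is exactly one), and suppressing $z_0$ returns a planar uniquely hamiltonian graph of minimum degree $3$. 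For $1\Rightarrow 5$, I would subdivide a chord of the hamiltonian cycle of a graph as in (1): the new vertex $z_0$ is the only $2$-valent vertex, the graph becomes non-hamiltonian, and the $(n-1)$-cycle avoiding $z_0$ is exactly the old hamiltonian cycle.

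The hard part will be the word ``exactly'' in (5). Subdividing a chord $xy$ certainly produces one $(n-1)$-cycle, the one missing $z_0$; but I must also exclude every $(n-1)$-cycle missing some other vertex $v$, and these correspond precisely to cycles of the original graph covering all but one vertex and using the edge $xy$. There is no reason a generic chord avoids all such near-spanning cycles, so the crux is to show that in some planar uniquely hamiltonian graph of minimum degree $3$ one can choose a chord through which no cycle covering all but one vertex passes --- plausibly by taking the chord incident to a cubic vertex, or by exploiting a minimal or highly structured (e.g.\ iteratively truncated) example. A secondary point, requiring a short case analysis rather than a new idea, is the degree bookkeeping in $5\Rightarrow 1$: one must verify that the avoided vertex $z_0$ may be taken to be the unique $2$-valent vertex and that its suppression does not create a multi-edge.
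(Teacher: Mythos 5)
Your handling of statements 1--4 is correct and essentially the paper's own argument: subdivide a hamiltonian-cycle edge for $1\Rightarrow 2$, reuse the doubling construction from the proof of Lemma~\ref{Thom-lemma} for $2\Rightarrow 1$, subdivide an edge of $E({\mathfrak h}_1)\setminus E({\mathfrak h}_2)$ for $4\Rightarrow 2$, and install a $K_4$-minus-an-edge gadget for $2\Rightarrow 4$ (the paper attaches the same gadget across a hamiltonian-cycle edge of a minimum-degree-3 counterexample, which is your construction one subdivision earlier); your triangle-replacement route to statement 3 also works, though iterating $1\leftrightarrow 2$ already gives infinitude. The genuine problems are both directions involving statement 5, and neither is the ``secondary point'' or ``short case analysis'' you describe. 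For $5\Rightarrow 1$, your plan to delete/suppress the avoided vertex $z_0$ rests on the claim that ``$z_0$ may be taken to be the unique 2-valent vertex.'' There is no such freedom: statement 5 asserts only that \emph{some} graph with those properties exists, and in it the vertex missed by the unique $(n-1)$-cycle may have arbitrarily large degree $d$, in which case $G-z_0$ acquires up to $d$ vertices of degree 2 and is not a counterexample to $({\mathfrak B})$. The missing idea---which is the real content of this direction---is the paper's: add edges $v_iv_{i+1}$ between consecutive neighbours of $z_0$ to restore degree at least 3, and note that no added edge can create a second hamiltonian cycle, since a new cycle through $v_iv_{i+1}$ would lift via the path $v_i z_0 v_{i+1}$ to a hamiltonian cycle of the original graph, contradicting the non-hamiltonicity required in statement 5 (which is precisely why that hypothesis is in the statement). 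Some further bookkeeping for the case $v_iv_{i+1}\in E(G)$ is then needed, but without the edge-addition step the implication does not go through.

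For $1\Rightarrow 5$ you subdivide a chord and then candidly admit you cannot rule out $(n-1)$-cycles passing through that chord and missing a vertex other than the subdivision vertex. As written this is an unfilled gap, not a detail: producing a chord lying on no near-spanning cycle is not obviously possible (it is in the vicinity of Royle's open question, discussed elsewhere in the paper, about uniquely hamiltonian graphs of minimum degree 3 without $(n-1)$-cycles), and your suggested remedies (a chord at a cubic vertex, a minimal example) are not arguments. The paper avoids the difficulty with a different construction: starting from a counterexample $G$ with a 2-valent vertex $w$ having neighbours $w',w''$, take two copies of $G-w$ (deleting $w'w''$ if present), identify the two copies of $w'$ into $v'$ and of $w''$ into $v''$, and join $v'$ to $v''$ by a path through a new vertex $v$. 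The 2-cut $\{v',v''\}$ forces any long cycle to decompose into one $v'v''$-path per copy; each copy admits exactly one hamiltonian such path, so the graph is non-hamiltonian with exactly one $(2n-4)$-cycle (avoiding $v$), and the degree and planarity conditions are easy to check. You would need to replace your chord-subdivision argument by something of this kind.
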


\begin{proof}
Let $G$ be a counterexample to $({\mathfrak B})$ of minimum degree at least~3. Consider an edge $e$ on the unique hamiltonian cycle of $G$. Adding a vertex on $e$ yields a counterexample to $({\mathfrak B})$ containing exactly one 2-valent vertex. Now let $G$ be a counterexample to $({\mathfrak B})$ containing precisely one vertex of degree~2. We use the same idea as in the proof of Lemma~\ref{Thom-lemma} and obtain a counterexample to $({\mathfrak B})$ of minimum degree at least~3. We have shown the equivalence of statements 1 and 2. These arguments immediately yield that if one counterexample to $({\mathfrak B})$ exists, there must be infinitely many such counterexamples.

We now show the equivalence of the existence of a counterexample to $({\mathfrak B})$ and statement~4. Suppose $G$ is a planar graph with minimum degree at least~3 and containing exactly two hamiltonian cycles ${\mathfrak h}_1$ and ${\mathfrak h}_2$. We can then subdivide once an edge in $E({\mathfrak h}_1) \setminus E({\mathfrak h}_2)$ and obtain a uniquely hamiltonian graph $G'$. All vertices in $G$ (which we see as a subgraph of $G'$) have the same degree as in $G'$, so $G'$ contains exactly one vertex of degree~2, and clearly $G'$ is planar. This means that we have obtained a counterexample to $({\mathfrak B})$.

Assume now that there is a counterexample $G$ to $({\mathfrak B})$. Without loss of generality we may suppose that $G$ has minimum degree at least~3. We denote the hamiltonian cycle in $G$ by ${\mathfrak h}$. Consider, for adjacent vertices $x,y$ in $G$ such that $xy$ lies on ${\mathfrak h}$, the graph $$G' = (V(G) \cup \{ v_1, v_2 \}, E(G) \setminus \{ xy \} \cup \{ xv_1, xv_2, v_1v_2, v_1y, v_2y \} ).$$
There are exactly two hamiltonian $xy$-paths in $G'[\{ x, y, v_1, v_2 \}]$ and there is exactly one hamiltonian $xy$-path in $G'[V(G)]$. Thus, $G'$ is a planar graph with minimum degree~3 and containing exactly two hamiltonian cycles.

Let us prove the equivalence of the existence of a counterexample to $({\mathfrak B})$ and statement~5. Let $G$ be an $n$-vertex counterexample to $({\mathfrak B})$, which by above discussion we may assume to contain a 2-valent vertex $w$ with neighbours $w',w''$. Let $G_1$ and $G_2$ be disjoint copies of $G - w - w_1w_2$ (we remove the edge $w_1w_2$ only if it is present in $G$), and $w'_i$ and $w''_i$ the respective copies of $w'$ and $w''$. In $G_1 \cup G_2$, we identify $w'_1$ with $w'_2$ and $w''_1$ with $w''_2$, calling the respective resulting vertices $v'$ and $v''$. We then add the edge between $v'$ and $v''$ and a vertex $v$ on this new edge. We obtain the graph $G'$ of order $2n - 3$. The graph $G'$ is non-hamiltonian but contains exactly one $(2n - 4)$-cycle (which avoids $v$) and is clearly planar.

Let $G$ be a graph satisfying the properties given in statement~5. We denote its unique $(n-1)$-cycle by ${\mathfrak c}$, and by $v$ the vertex not contained in ${\mathfrak c}$. Let $v_1, \ldots, v_d$ be the neighbours of $v$. Observe that $G$ may contain a vertex of degree 2 and that this can be $v$. This makes no difference in the arguments that follow. The graph $G - v = G'$ is uniquely hamiltonian and planar. If all but at most one vertices of $G'$ have degree at least 3, then we have a counterexample to $({\mathfrak B})$ and we are done.

The degrees of the vertices which are not $v_1, \ldots, v_d$ remain unchanged when we remove $v$ from $G$. Thus, we must now take care of the degrees of $v_1, \ldots, v_d$. This is achieved by adding edges $v_i v_{i+1}$ where necessary. Note that if $G' + v_iv_{i+1}$ contains a new hamiltonian cycle ${\mathfrak h}$, then this cycle must use $v_iv_{i+1}$; but then replacing in ${\mathfrak h}$ the edge $v_iv_{i+1}$ with the path $v_ivv_{i+1}$, we obtain a hamiltonian cycle in $G$, a contradiction, as $G$ was assumed to be non-hamiltonian. If the degrees of $v_i$ and $v_{i+1}$ in $G'$ are already at least~3 then it is not necessary to add an edge. Otherwise the degrees of $v_i$ and $v_{i+1}$ are at least 2 since from each vertex only exactly one incident edge was removed. (In $G$, the vertex $v_i$ could not have had degree~2 as then ${\mathfrak c}$ would have visited $v$.)

We still need to deal with the case when $v_i v_{i+1} \in E(G)$. We now prove that in this situation the degrees of $v_i$ and $v_{i+1}$ were already sufficiently large. If $v_i$ and $v_{i+1}$ are adjacent in $G$, then $v_i v_{i+1} \notin E({\mathfrak c})$, as discussed above. Thus, each of $v_i$ and $v_{i+1}$ is incident with at least two further edges, namely the edges lying on ${\mathfrak c}$. Together with the edge $v_i v_{i+1}$ we obtain that the degrees of $v_i$ and $v_{i+1}$ are at least 3 in $G'$, and that it is not necessary to add an edge between these two vertices.

Therefore, we can modify the graph $G'$ such that all of its vertices have degree at least~3, with the possible exception of at most one vertex of degree~2 already present in $G$. It is clear that the edges $v_i v_{i+1}$ can be added such that the graph remains planar. This graph we have constructed is a counterexample to $({\mathfrak B})$.
\end{proof}

The proof of the following observation consists of two parts. Firstly, consider the infinite family of planar non-hamiltonian graphs with a unique $(n-1)$-cycle and exactly two vertices of degree~2 from Figure~\ref{fig:palm}, left-hand side. The maximum degree of this family of graphs is unbounded. The right-hand side of Figure~\ref{fig:palm} shows an infinite family of such graphs in which the maximum degree of each member is bounded above by~4. Secondly, using a computer, we verified that there exist no planar non-hamiltonian graphs of order $n < 10$ containing exactly one $(n-1)$-cycle, exactly two vertices of degree~$2$ and all other vertices of degree at least $3$. %\jg{Eventueel ook te vermelden dat er exact 84 zo'n grafen zijn met 10 toppen en exact 1364 met 11 toppen?}

\begin{observation}
There exists a planar non-hamiltonian graph of order $n$ containing exactly one $(n-1)$-cycle, exactly two vertices of degree~$2$, and all other vertices of degree at least $3$ if and only if $n \ge 10$.
\end{observation}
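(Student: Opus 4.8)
I would read the statement as an equivalence and split the work into the forward direction (such a graph forces $n \ge 10$) and the backward direction (for every $n \ge 10$ one exists). The backward direction is the substantive part, and I would prove it by exhibiting, for each $n \ge 10$, an explicit planar graph $G_n$ with the four required features: non-hamiltonicity, a \emph{unique} $(n-1)$-cycle, exactly two $2$-valent vertices, and all remaining vertices of degree at least $3$. The forward direction I would dispatch by a finite computation.

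The design principle I would isolate first concerns the unique $(n-1)$-cycle $\mathfrak c$ and the single vertex $v$ lying off it. If $\mathfrak c$ is chordless and the neighbours of $v$ on $\mathfrak c$ are pairwise at distance at least $3$ along $\mathfrak c$, then both cycle-counting tasks become trivial: (a) no hamiltonian cycle exists, since any such cycle would traverse $v$ via $a$--$v$--$b$ for two neighbours $a,b$ and be completed by a spanning $a$--$b$ path inside $\mathfrak c$, forcing $a,b$ to be consecutive on $\mathfrak c$; and (b) $\mathfrak c$ is the only $(n-1)$-cycle, because an alternative one omitting a vertex $u_j$ would likewise force $v$ to be adjacent to both $u_{j-1}$ and $u_{j+1}$, a pair at distance exactly $2$. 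Planarity is then immediate by drawing $v$ inside $\mathfrak c$. The catch is the degree constraint: the distance-$\ge 3$ condition lets $v$ dominate only about a third of $\mathfrak c$, so most cycle vertices begin with degree $2$, far more than the two permitted. To raise them I would attach short local chords or small planar gadgets to the runs of would-be $2$-valent vertices, each chosen to admit a unique traversal between its two cycle-attachment points and to carry no long cycle of its own, placing the two designated degree-$2$ vertices deliberately. I expect the \textbf{main obstacle} to be re-verifying, once these chords are present (so that $\mathfrak c$ is no longer chordless), that properties (a) and (b) survive — that is, that no added edge can be spliced into a longer cycle. This amounts to a finite case analysis of how a hypothetical long cycle could route through each gadget, and it is exactly where one risks accidentally creating a hamiltonian cycle or a second $(n-1)$-cycle. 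The two families of Figure~\ref{fig:palm} correspond to two organisations of these gadgets: one absorbing the extra incidences at a single vertex (hence unbounded maximum degree) and one distributing them so that $\Delta(G_n) \le 4$. To sweep out \emph{all} $n \ge 10$ I would pair an explicit base graph with an order-increasing operation of small fixed increment — for example replacing a cubic vertex by a triangle, which raises the order by $2$, preserves planarity and the number of $2$-valent vertices, and, as one checks, preserves both the non-hamiltonicity and the uniqueness of the longest cycle — using base graphs of orders $10$ and $11$ to cover both parities.

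For the forward direction it suffices to show that no example exists for $n \le 9$. I would settle this by exhaustive computer search: generate all planar non-hamiltonian graphs of each order $n \in \{3,\dots,9\}$ using the non-hamiltonian variant of Algorithm~\ref{algo:generate_uhg} together with a planarity filter, and for each graph count its $(n-1)$-cycles and its vertices of degree $2$, confirming that none simultaneously has exactly one $(n-1)$-cycle, exactly two $2$-valent vertices, and all other vertices of degree at least $3$. The same search confirms the base cases $n \in \{10,11\}$ used in the construction, so the two directions meet exactly at the threshold $n = 10$.
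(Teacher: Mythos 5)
Your decomposition matches the paper's exactly: the paper likewise handles $n \ge 10$ by exhibiting two explicit infinite planar families (Figure~\ref{fig:palm}) --- one of unbounded maximum degree and one with maximum degree at most $4$, precisely the two organisations you anticipate --- and settles $n < 10$ by exhaustive computer search. The only difference is one of completeness rather than method: the paper simply draws its families and asserts their properties, whereas you leave the gadgets and the cycle-routing case analysis (which you correctly identify as the crux) unspecified; neither account carries that verification out in print.
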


\begin{figure}[!ht]
\begin{center}
\includegraphics[height=48mm]{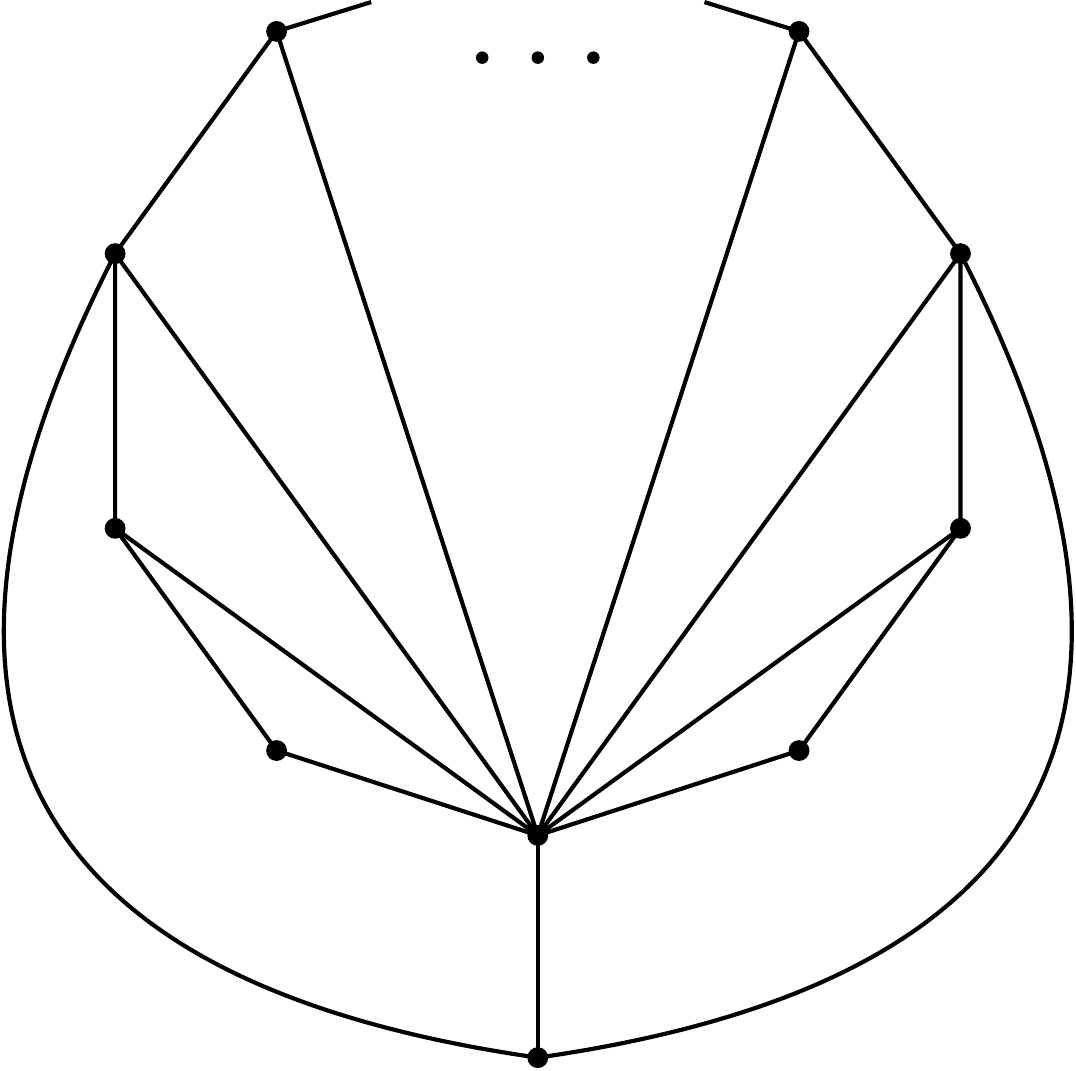} \hspace{2cm} \includegraphics[height=50mm]{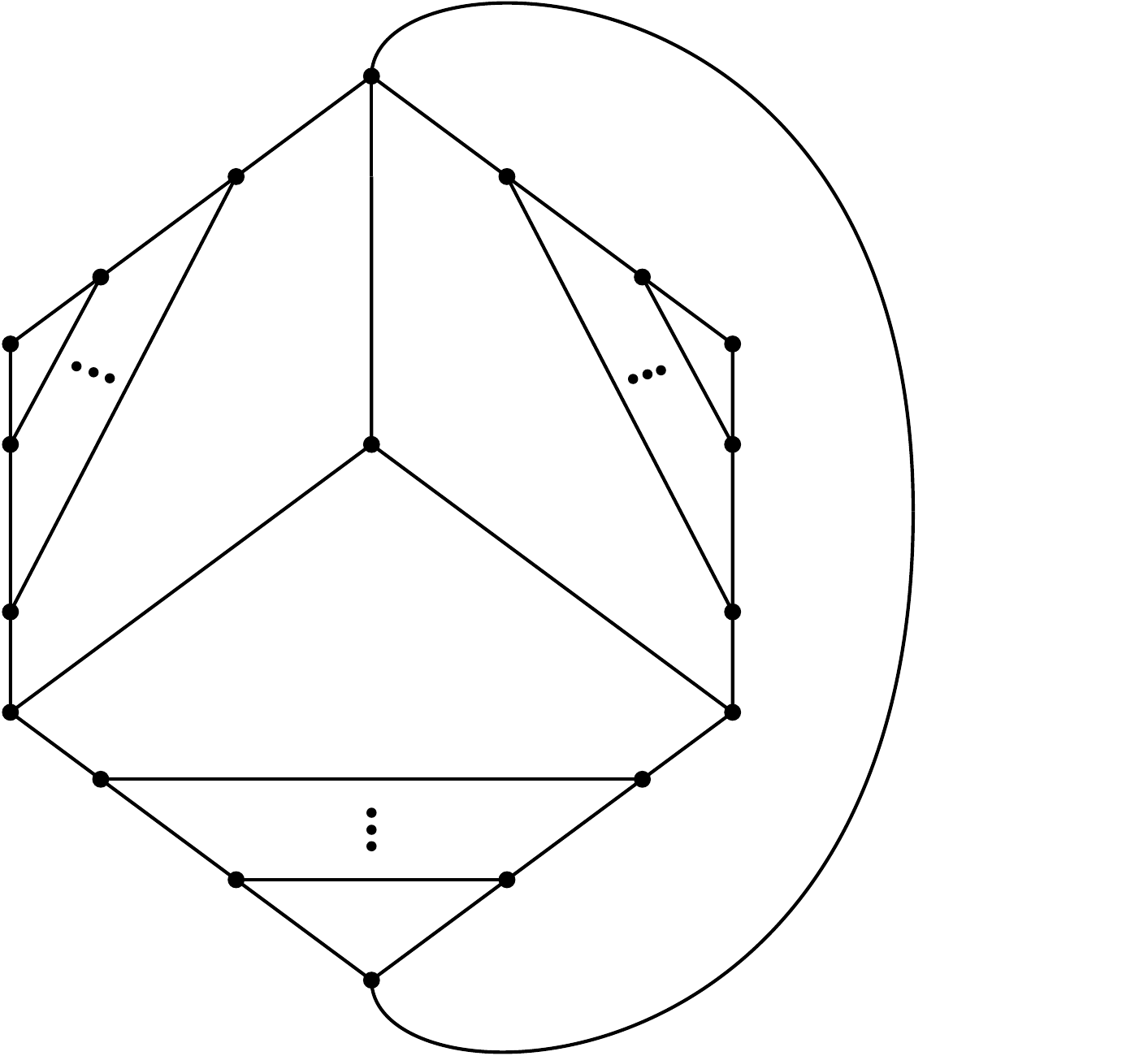}
	\caption{Two infinite families of planar non-hamiltonian graphs with a unique $(n-1)$-cycle and exactly two vertices of degree~2.}
	\label{fig:palm}
\end{center}
\end{figure}

\subsubsection{Thomassen's conjecture and a question of Royle}

It is a natural question to ask for the smallest order of a uniquely hamiltonian graph with a certain minimum degree $\delta$ or connectivity $\kappa$. For $\delta = 2$ and $\kappa = 2$ the answer is trivial, namely $K_3$. There are many such graphs as presented in other sections of this article. For both $\delta = 3$ and $\kappa = 3$ the answer is 18 and was given by Royle~\cite{Ro17} using a computer, see Figure~\ref{fig:nearly_cubic_18v_g5} for an example. In~\cite{fleischner2014uniquely}, Fleischner describes uniquely hamiltonian graphs in which every vertex has degree~4 or 14. His smallest example of connectivity~2 has 338~vertices, while the smallest graph of connectivity~3 he constructs has order~408. Thus, the smallest uniquely hamiltonian graph with $\delta = 4$ has order at least~18 and at most~338. To the best of our knowledge, no uniquely hamiltonian graph of minimum degree at least~5 is known.

Concerning $\kappa \ge 4$, Fleischner conjectured~\cite[p.~176]{fleischner2014uniquely} that every uniquely hamiltonian graph has connectivity at most~3. It seems that the first explicit construction of a 3-connected uniquely hamiltonian graph is due to Grinberg~\cite{grinberg1986three}, and his example is of the same order (18) and only one edge larger than the smallest example there is, which was determined by Royle using a computer~\cite{Ro17}. Aldred and Thomassen also described a 3-connected uniquely hamiltonian graph, see~\cite{holton1999planar}. By replacing in these graphs cubic vertices (which they all contain) with triangles, we obtain infinitely many such graphs. Using Seamone's technique~\cite{seamone2015uniquely} we can render them triangle-free.

We now state the following intriguing conjecture of Thomassen concerning uniquely hamiltonian graphs with minimum degree at least~3, and discuss its connection with a recent question of Royle.

\bigskip

\noindent \textbf{Conjecture} (Thomassen~\cite{thomassen1996number}). \emph{Every hamiltonian graph $G$ of minimum degree at least $3$ contains an edge $e$ such that both $G - e$ and $G/e$ are hamiltonian.} \hfill $({\mathfrak T})$

\bigskip

(Recall that $G - e$ stands for removing from $G$ the edge $e$ but not its endpoints and $G/e$ denotes the graph obtained by contracting the edge $e$ from $G$.)
If $G$ contains at least two hamiltonian cycles, then any edge contained in one but not the other hamiltonian cycle satisfies the condition from $({\mathfrak T})$, so the conjecture's veracity is open exactly for the family of uniquely hamiltonian graphs. Let a graph $G$ contain exactly one hamiltonian cycle ${\mathfrak h}$. Clearly, $G - e$ is non-hamiltonian for every $e \in E({\mathfrak h})$, so candidate edges $vw$ satisfying $({\mathfrak T})$ must lie in $E(G) \setminus E({\mathfrak h})$. For such an edge $vw$ we have that $G/vw$ is hamiltonian if and only if $G - v$ or $G - w$ is hamiltonian (possibly both).

Royle recently asked (personal communication) whether uniquely hamiltonian graphs of minimum degree~3 and without $(n - 1)$-cycles exist. If such a graph would be found, it would constitute a counterexample to $({\mathfrak T})$.

Recall that Royle~\cite{Ro17} showed that the smallest uniquely hamiltonian graphs with minimum degree at least~3 have order~18. Royle's result implies the first statement of the following observation. The other statements were obtained using our implementation of Algorithm~\ref{algo:generate_uhg} combined with a separate program which tests if each generated graph $G$ contains an edge~$e$ such that both $G - e$ and $G/e$ are hamiltonian.

\begin{observation}\label{obs:conj_thomassen}
Thomassen's conjecture $({\mathfrak T})$ is true for graphs up to order~$17$, for graphs of girth at least~$4$ up to order~$18$, for graphs of girth at least~$5$ up to order~$22$, and for nearly cubic graphs up to order~$24$.
\end{observation}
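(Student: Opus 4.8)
The plan is to reduce the verification to uniquely hamiltonian graphs of minimum degree at least~3, then to convert the condition in $({\mathfrak T})$ into a single easily testable property, and finally to confirm that property by exhaustive generation. As already observed preceding the statement, if a hamiltonian graph $G$ contains at least two hamiltonian cycles then any edge lying on one cycle but not the other witnesses $({\mathfrak T})$; moreover $({\mathfrak T})$ only concerns graphs of minimum degree at least~3. Hence it suffices to examine uniquely hamiltonian graphs of minimum degree at least~3 of the relevant orders and girths.

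First I would dispose of the bound for general graphs. By Royle's result~\cite{Ro17}, the smallest uniquely hamiltonian graph of minimum degree at least~3 has order~18; consequently no such graph of order at most~17 exists, and $({\mathfrak T})$ holds vacuously for all hamiltonian graphs of minimum degree at least~3 up to order~17. This settles the first statement.

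Next I would simplify the edge test for a uniquely hamiltonian graph $G$ of minimum degree at least~3 with hamiltonian cycle ${\mathfrak h}$. Since $G - e$ is non-hamiltonian for every $e \in E({\mathfrak h})$, a witnessing edge $vw$ must lie in $E(G) \setminus E({\mathfrak h})$; for such an edge $G - vw$ is automatically hamiltonian, as it still contains ${\mathfrak h}$, so the only remaining requirement is that $G/vw$ be hamiltonian, which as noted happens precisely when $G - v$ or $G - w$ is hamiltonian. Because every vertex has degree at least~3, each vertex is incident with an edge off ${\mathfrak h}$, and therefore $({\mathfrak T})$ holds for $G$ if and only if $G$ possesses an $(n - 1)$-cycle. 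This is exactly the reformulation underlying Royle's question, and it reduces the test to a single hamiltonicity check of the vertex-deleted subgraphs.

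Finally I would run the modified Algorithm~\ref{algo:generate_uhg}, restricted to minimum degree at least~3 together with the appropriate girth bound or the nearly cubic constraint, to generate exhaustively all uniquely hamiltonian graphs of girth at least~4 up to order~18, of girth at least~5 up to order~22, and all nearly cubic ones up to order~24, and verify that each produced graph admits an $(n - 1)$-cycle. Completeness of this enumeration is guaranteed by the analogue of Theorem~\ref{thm:proof_complete} for the restricted generation. The main obstacle is computational rather than conceptual: the number of candidate graphs grows rapidly with the order, so the feasibility of the whole argument rests on the efficiency of the generation algorithm and on the reliability of the independent $(n - 1)$-cycle test, and correctness of the conclusion hinges entirely on the exhaustiveness of the generation.
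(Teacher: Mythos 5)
Your proposal follows essentially the same route as the paper: the bound for general graphs up to order~$17$ is obtained from Royle's result that the smallest uniquely hamiltonian graphs of minimum degree at least~$3$ have order~$18$ (so the only graphs for which $({\mathfrak T})$ could fail do not exist), and the remaining bounds come from exhaustively generating the relevant uniquely hamiltonian graphs with the algorithm of Section~\ref{section:generation_algo} and testing each one. The single point of divergence is the per-graph test: the paper runs a separate program that directly checks for an edge $e$ with both $G - e$ and $G/e$ hamiltonian, whereas you first prove that, for a uniquely hamiltonian graph of minimum degree at least~$3$, this is equivalent to the existence of an $(n-1)$-cycle. That equivalence is correct (the paper only states the easy direction when discussing Royle's question), and combined with the paper's later observation that no uniquely hamiltonian graphs of minimum degree at least~$3$ without $(n-1)$-cycles exist up to order~$18$ (order~$19$ for girth at least~$4$, order~$23$ for girth at least~$5$), your reformulation would even yield marginally stronger bounds than the ones claimed here.
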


The number of uniquely hamiltonian graphs with girth at least 5 and minimum degree at least 3 is shown in Table~\ref{table:counts_mindeg3_girth5}. These graphs can be downloaded from the \textit{House of Graphs}~\cite{hog} at \url{http://hog.grinvin.org/UHG}. We also verified that there are no uniquely hamiltonian graphs with girth 4 and minimum degree at least 3 on 18 (or fewer) vertices.

\begin{table}[h!tb]
	\centering
	\small
	\begin{tabular}{lrrrrrr}
		\toprule	
		Order & $< 18$ & $18$ & $19$ & $20$ & $21$ & $22$\\
		Number of graphs & $0$ & $2$ & $1$ & $2$ & $25$ & $33$\\
%25 & ?  & ? & 18 906 \\
%26 & ?  & ? & 25 299 \\
		\bottomrule		
	\end{tabular}
	\caption{The number of uniquely hamiltonian graphs with girth at least 5 and minimum degree at least 3. All of these graphs have girth 5 and minimum degree 3.}
	\label{table:counts_mindeg3_girth5}
\end{table}

By extending our algorithm from Section~\ref{section:generation_algo} to only generate uniquely hamiltonian graphs without $(n - 1)$-cycles, we were also able to look into Royle's question which led to the following observation.

\begin{observation}
There are no uniquely hamiltonian graphs of minimum degree at least~$3$ without $(n - 1)$-cycles up to order~$18$. Furthermore, there are no uniquely hamiltonian graphs of minimum degree at least~$3$ and girth at least $4$ (respectively $5$) without $(n - 1)$-cycles up to order~$19$ (respectively $23$).
\end{observation}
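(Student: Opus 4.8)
The plan is to settle this by exhaustive computer search built on the generator of Section~\ref{section:generation_algo}, exploiting the monotonicity of the relevant constraints to prune the search tree aggressively. The crucial observation is that containing an $(n-1)$-cycle is preserved under edge addition: if a graph $H$ on vertex set $V$ contains an $(n-1)$-cycle and $H$ is a spanning subgraph of $H'$, then $H'$ contains that same $(n-1)$-cycle. Equivalently, being free of $(n-1)$-cycles is preserved under edge removal. Since Algorithm~\ref{algo:generate_uhg} constructs each target graph from an $n$-cycle by successively adding edges, every intermediate graph on a canonical construction path toward a graph $G$ is a spanning subgraph of $G$; hence every cycle of an intermediate graph is a cycle of $G$. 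Consequently, any uniquely hamiltonian target graph without an $(n-1)$-cycle is reached only through intermediate graphs that are themselves free of $(n-1)$-cycles, so we may discard, without loss, every branch whose current graph already contains an $(n-1)$-cycle.

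First I would modify the uniquely-hamiltonian generator by inserting, on the expansion-acceptance line, an additional test rejecting any graph that possesses an $(n-1)$-cycle; by the monotonicity argument above this leaves the set of generated uniquely hamiltonian graphs without $(n-1)$-cycles unchanged, so correctness carries over verbatim from the proof of Theorem~\ref{thm:proof_complete}. Next I would impose the girth restrictions: a lower bound on the girth is likewise preserved under edge removal, since adding edges can only create shorter cycles, so the generation can be confined to graphs of girth at least $4$ or at least $5$ exactly as noted at the close of Section~\ref{section:generation_algo}. Finally, because minimum degree at least $3$ is \emph{not} preserved under edge addition and therefore cannot be used for pruning, I would simply filter the output, retaining among all generated uniquely hamiltonian graphs free of $(n-1)$-cycles those with $\delta \ge 3$.

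Running this procedure for all orders $n \le 18$ in the general case, $n \le 19$ under girth $\ge 4$, and $n \le 23$ under girth $\ge 5$, I would record that the filtered output is empty in every case. By the exhaustiveness guaranteed by the analogue of Theorem~\ref{thm:proof_complete}, together with the two monotonicity justifications, this establishes that no uniquely hamiltonian graph with $\delta \ge 3$ and without an $(n-1)$-cycle exists in the stated ranges. As a consistency check in the general case, Royle's result that the smallest uniquely hamiltonian graphs of minimum degree at least $3$ have order $18$ already renders all orders $n \le 17$ vacuous, so the genuine content there is confined to $n = 18$, where one verifies that every such graph indeed carries a $17$-cycle.

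The main obstacle I expect is computational rather than conceptual: the number of uniquely hamiltonian graphs grows extremely fast (compare the planar counts in Table~\ref{table:counts_planar_uhg}), so reaching order $18$ with no girth constraint, and order $23$ under girth $\ge 5$, is feasible only because the $(n-1)$-cycle pruning and the girth pruning cut the tree so drastically. A secondary concern is that the $(n-1)$-cycle detection and the hamiltonian-cycle counting be both correct and fast enough; I would address this through the independent correctness tests and cross-validation described in Section~\ref{sect:comput_results}, and by confirming agreement with Royle's established order-$18$ bound.
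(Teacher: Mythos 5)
Your proposal is correct and takes essentially the same approach as the paper: the authors likewise extend the generator of Section~\ref{section:generation_algo} to produce only uniquely hamiltonian graphs without $(n-1)$-cycles (valid precisely because absence of an $(n-1)$-cycle is inherited by all intermediate spanning subgraphs of the edge-addition process), combine this with the girth restrictions, and then inspect minimum degree. One minor wording slip: minimum degree at least~$3$ \emph{is} preserved under edge addition; the reason it cannot serve as a pruning criterion is that it is not inherited by the intermediate subgraphs (i.e., not preserved under edge \emph{removal}), though your conclusion---filter the final output on $\delta \ge 3$---is exactly right.
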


\subsubsection{Sheehan's conjecture}

We recall a conjecture of Sheehan from 1974:

\bigskip

\noindent \textbf{Conjecture} (Sheehan~\cite{sheehan1975multiplicity}). \emph{There is no uniquely hamiltonian $4$-regular graph.}\hfill $({\mathfrak S})$

\bigskip

Petersen's 2-Factor Theorem~\cite{petersen1891theorie} states that every $(2k)$-regular graph can be decomposed into $k$ edge-disjoint 2-factors. Hence, if $({\mathfrak S})$ is true, then the only regular uniquely hamiltonian graphs are cycles.

In~\cite{haythorpe2017minimum} Haythorpe computationally determined the minimum number of hamiltonian cycles among all hamiltonian $k$-regular graphs of a given small order for $4 \leq k \leq 7$. In particular, he determined the minimum non-zero number of hamiltonian cycles in 4-regular hamiltonian graphs up to 16~vertices and in 4-regular  hamiltonian graphs of connectivity~2 up to 18~vertices.

Using the generator for regular graphs \textit{genreg}~\cite{meringer1999fast} we generated all 4-regular graphs up to 21~vertices and determined the minimum number of hamiltonian cycles for each order. The results, together with the counts for girth at least 4 and 5, are listed in Table~\ref{table:counts_4reg}. Note the striking drop in the minimum non-zero number of hamiltonian cycles for 4-regular graphs of girth at least 4 for orders 12 and 20.

\begin{table}[h!tb]
	\centering
	\small
	\begin{tabular}{crrr}
		\toprule	
		\multirow{2}{*}{Order} & \multicolumn{3}{c}{Minimum number of hamiltonian cycles} \\
		& girth $\geq 3$ &  girth $\geq 4$  & girth $\geq 5$ \\		
		\midrule
5 & 12 (1) & - & - \\
6 & 16 (1) & - & - \\
7 & 23 (1) & - & - \\
8 & 29 (1) & 72 & - \\
9 & 36 (1) & - & - \\
10 & 36 (1) & 96 & - \\
11 & 48 (2) & 145 & - \\
12 & 60 (2) & 142 & - \\
13 & 72 (3) & 250 & - \\
14 & 72 (1) & 323 & - \\
15 & 72 (2) & 460 & - \\
16 & 72 (1) & 604 & - \\
17 & 96 (2) & 936 & - \\
18 & 108 (1) & 1 024 & - \\
19 & 144 (21) & 1 512 & 2 688 \\
20 & 144 (18) & 1 024 & 2 716 \\
21 & 144 (13)  & 1 600 & 3 657 \\
22 & ?  & ? & 5 589 \\
23 & ?  & ? & 8 382 \\
24 & ?  & ? & 12 412 \\
25 & ?  & ? & 18 906 \\
26 & ?  & ? & 25 299 \\
		\bottomrule		
	\end{tabular}
	\caption{The minimum number of hamiltonian cycles among all 4-regular hamiltonian graphs of girth at least 3, 4, and 5 for that order. The symbol ``-" indicates that no 4-regular graphs of that order and girth exist. The numbers in parentheses indicate how many graphs of that order have the minimum non-zero number of hamiltonian cycles. For the columns regarding girth at least 4 and 5, respectively, there is in each case only one graph with the minimum non-zero number of hamiltonian cycles.}
	\label{table:counts_4reg}
\end{table}

Our results from Table~\ref{table:counts_4reg} imply the following.

\begin{observation}\label{obs:conj_sheehan}
Sheehan's conjecture $({\mathfrak S})$ is true for graphs on up to $21$~vertices. Furthermore, the conjecture is true for $4$-regular graphs with girth at least $5$ up to $26$~vertices.
\end{observation}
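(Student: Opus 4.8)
The plan is to reduce the statement to a finite exhaustive computation whose output is recorded in Table~\ref{table:counts_4reg}. Sheehan's conjecture $({\mathfrak S})$ asserts precisely that no $4$-regular graph has exactly one hamiltonian cycle; equivalently, among all $4$-regular \emph{hamiltonian} graphs the minimum non-zero value of $h$ is at least $2$. Thus to verify $({\mathfrak S})$ for all orders $n \le 21$ it suffices to exhibit, for each such $n$, the minimum of $h$ over the hamiltonian members of the class and to check that this minimum exceeds $1$. It is worth stressing that the classical parity tool---Thomason's theorem, as exploited in Lemma~\ref{Thom-lemma}---is of no help here: a $4$-regular graph has all vertices of even degree, so the argument that forbids uniquely hamiltonian graphs with only odd-degree vertices simply does not apply. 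This is exactly why the conjecture remains open and why a direct computation is required.

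First I would generate, for each $n$ with $5 \le n \le 21$, all pairwise non-isomorphic $4$-regular graphs of order $n$; the generator \textit{genreg}~\cite{meringer1999fast} does this exhaustively. For each generated graph I would count its hamiltonian cycles with a dedicated counting routine, discard the non-hamiltonian graphs (those with $h = 0$), and record the minimum non-zero count together with the number of graphs attaining it. These are exactly the entries of the ``girth $\ge 3$'' column of Table~\ref{table:counts_4reg}. Since every listed minimum is at least $12 > 1$, no $4$-regular graph on at most $21$ vertices is uniquely hamiltonian, which establishes the first assertion.

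For the second assertion I would repeat the procedure but restrict the generation to girth at least $5$. By the Moore bound a $4$-regular graph of girth $5$ has at least $1 + 4 + 4 \cdot 3 = 17$ vertices, and in fact none exists below order $19$ (the smallest being the $(4,5)$-cage), so for $n < 19$ the statement is vacuously true. For $19 \le n \le 26$ the girth restriction prunes the search space drastically, which is precisely what makes pushing the verification out to order $26$ feasible; the resulting minima---the ``girth $\ge 5$'' column---are all at least $2688 > 1$, giving the claim.

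The main obstacle is purely computational: the number of $4$-regular graphs grows very rapidly with $n$, so both the exhaustive generation and the hamiltonian-cycle counting must be efficient enough to finish within the available resources. The severity of this obstacle is exactly what forces the two different ranges---order $21$ in the general case versus order $26$ under the girth-$5$ hypothesis---the latter being reachable only because imposing girth $5$ eliminates the overwhelming majority of candidate graphs.
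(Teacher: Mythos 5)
Your proposal matches the paper's approach exactly: the authors likewise generate all $4$-regular graphs up to order $21$ (and up to order $26$ under the girth-$\ge 5$ restriction) with \textit{genreg}, count hamiltonian cycles, and read the observation off the minima recorded in Table~\ref{table:counts_4reg}. The additional remarks on the Moore bound and the $(4,5)$-cage are correct but not needed; the argument is the same finite computation.
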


%\cz{Maybe show some small 4-reg.\ graphs with prime number of hamiltonian cycles.} \jg{Kunnen we zeker doen. Laat maar weten van welke je een tekening wilt...}

%\jg{TODO: ook infinite family van Haythorpe vermelden en jouw nieuwe familie die bepaalde gaten opvult!}

Haythorpe~\cite{haythorpe2017minimum} describes an infinite family of 4-regular graphs based on $K_5$ and the 1-skeleton of the octahedron. With a few minor additions we obtain the following result, which essentially belongs to him:

\begin{observation}
There exists a $4$-regular graph of order $n$ with exactly $36 \cdot 2^{\lfloor{\frac{n}{5}}\rfloor - 2}$ hamiltonian cycles for every $n \in \{ 10, 15, 16, 20, 21, 22, 25, 26, 27, 28 \}$ and all $n \ge 30$.
\end{observation}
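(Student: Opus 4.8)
The plan is to realise each of the required graphs as a cyclic \emph{necklace} of gadgets built from $K_5$ and the octahedron, and to count its hamiltonian cycles by a transfer-matrix argument. From $K_5$ delete a matching of two edges to obtain a gadget $Q_5$ on five vertices having four vertices of degree~$3$ (the \emph{ports}, two designated \emph{left} and two \emph{right}) and one vertex of degree~$4$; analogously, from the octahedron $K_{2,2,2}$ delete two independent edges to obtain a gadget $Q_6$ on six vertices with four ports and two interior vertices. Given $t \ge 2$ gadgets $B_1,\dots,B_t$, each a copy of $Q_5$ or $Q_6$, I would join the two right ports of $B_i$ to the two left ports of $B_{i+1}$ by two edges, indices read modulo $t$. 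Every port then receives exactly one external edge, so the resulting graph $H$ is $4$-regular; its order is $5a+6b$, where $a$ and $b$ are the numbers of $Q_5$- and $Q_6$-gadgets and $t=a+b$.

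First I would recover Haythorpe's count for the all-$K_5$ necklace, to which this observation essentially belongs. A hamiltonian cycle meets each gadget in a spanning union of vertex-disjoint paths whose endpoints are ports, so it crosses the two edges of each gap in a restricted pattern; viewing the cycle as two strands threading the necklace, each gadget either keeps the strands parallel or swaps them, and the global object is a single cycle precisely when the parity of the swaps around the necklace is correct. Encoding the port-connection states in a fixed transfer matrix $M$ and computing the (bounded, gadget-local) routing data of $Q_5$ yields $h(H)=36\cdot 2^{\,t-2}$; the base case $t=2$, of order~$10$, can be checked by direct enumeration to equal $36$, in agreement with Table~\ref{table:counts_4reg}. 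The content of the ``minor additions'' is then that, for a suitable choice of the two deleted edges, $Q_6$ presents the identical port interface and contributes the identical local routing data to $M$, so that substituting $Q_6$ for up to $t-2$ of the $K_5$-gadgets (retaining at least two) preserves the count $36\cdot 2^{\,t-2}$ while raising the order by one per substitution.

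It then remains to match orders. Writing $t=\lfloor n/5\rfloor$, $b=n \bmod 5$ and $a=t-b$, the necklace with $a$ copies of $Q_5$ and $b$ copies of $Q_6$ has order $5a+6b=n$ and exactly $36\cdot 2^{\,t-2}=36\cdot 2^{\lfloor n/5\rfloor-2}$ hamiltonian cycles, provided the construction is admissible, that is $b\ge 0$ and at least two $K_5$-gadgets remain, $a\ge 2$. The condition $a\ge 2$ reads $\lfloor n/5\rfloor \ge (n \bmod 5)+2$; a short check shows it holds for $n\in\{10,15,16,20,21,22,25,26,27,28\}$ among $n<30$ (and fails for the intermediate values, where $a\le 1$), and for every $n\ge 30$, since then $\lfloor n/5\rfloor \ge 6 \ge (n \bmod 5)+2$. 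This is exactly the stated set of orders, and the resulting count is the claimed $36\cdot 2^{\lfloor n/5\rfloor-2}$.

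The main obstacle is the exact hamiltonian-cycle count rather than the arithmetic. One must justify the restricted crossing patterns at each gap, enumerate the admissible internal routings of both gadgets, and---most delicately---carry out the cyclic-closure bookkeeping, namely the parity condition joining the two strands into a single cycle, which is responsible for the normalising factor $2^{-2}$ and the constant~$36$. Establishing that $Q_5$ and $Q_6$ are genuinely transfer-equivalent (and that at least two $K_5$-gadgets must be retained for this to hold) is the crux; once it is in hand, the invariance of the count under substitution of $Q_6$ for $Q_5$ together with the base case $t=2$ completes the argument.
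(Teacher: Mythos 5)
The paper offers no proof of this observation at all---it is credited to Haythorpe ``with a few minor additions''---so there is no argument of the authors' to compare yours against step by step. Your reconstruction of the construction (a cyclic necklace of $K_5$-minus-a-matching gadgets and octahedron-minus-a-matching gadgets, two edges between consecutive blocks) matches the paper's one-sentence description, and your arithmetic is correct: with $b = n \bmod 5$ and $a = \lfloor n/5\rfloor - b$, the condition $a \ge 2$ reproduces exactly the stated set of orders, and the predicted counts $36, 72, 144$ for $n = 10, 15, 20$ agree with the minima recorded in Table~\ref{table:counts_4reg}.

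However, what you have written is a plan rather than a proof, and the plan is internally inconsistent at precisely the point you identify as the crux. You assert that $Q_6$ ``presents the identical port interface and contributes the identical local routing data'' to the transfer matrix as $Q_5$. If that were literally true, the hamiltonian cycle count of the necklace would depend only on the number $t$ of gadgets and not on their types; in particular the all-$Q_6$ necklace with $t = 2$ would be a simple $4$-regular graph of order $12$ with exactly $36$ hamiltonian cycles, and the mixed necklace with $a = b = 1$ would be one of order $11$ with $36$ hamiltonian cycles. Table~\ref{table:counts_4reg} shows that the minimum number of hamiltonian cycles over hamiltonian $4$-regular graphs of orders $12$ and $11$ is $60$ and $48$ respectively, so neither graph exists. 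Hence the transfer-equivalence cannot hold in the strong form you state; it can at best hold conditionally, in a way that depends on the presence (and perhaps placement) of $K_5$-blocks---and it is exactly this conditional mechanism that determines which orders are attainable. Your side condition $a \ge 2$ is read off from the answer rather than derived from the construction: nothing in the argument explains why one $K_5$-gadget is insufficient but two suffice, nor whether the two must be adjacent. Combined with the unperformed enumeration of the gadgets' internal routings, the unverified cyclic-closure parity bookkeeping, and the unproved doubling law $h = 36\cdot 2^{t-2}$ for the base family, this leaves the essential content of the observation unestablished.
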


%\subsubsection{Further remarks}

%\begin{proposition}
%Let $G$ be a uniquely hamiltonian graph of connectivity~$k \ge 3$, and let $X$ be a $k$-cut of $G$ such that $G - X$ has exactly two components $C$ and $C'$. Then $G[V(C) \cup X] \ncong G[V(C') \cup X]$.
%\end{proposition}

%\begin{proof}
%Assume that $H = G[V(C) \cup X]$ and $H' = G[V(C') \cup X]$ are isomorphic. Let ${\mathfrak h}$ be the hamiltonian cycle of $G$. Put $P = {\mathfrak h} \cap H$ and $P' = {\mathfrak h} \cap H'$. Since $H$ and $H'$ are isomorphic, we can visit $H$ along $P'$ and $H'$ along $P$, yielding a hamiltonian cycle ${\mathfrak h}'$ of $G$. There exist $v,w \in X$ such that one of the components of ${\mathfrak h} \cap H$ is a $vw$-path. If ${\mathfrak h} = {\mathfrak h}'$, then $|V({\mathfrak h}) \cap X| = 2$, but this is impossible as $|X| \ge 3$. So $G$ contains two distinct hamiltonian cycles, a contradiction.
%\end{proof}

%The analogue of this proposition for 3-edge-cuts does not hold: take the graph $G$ from Figure~\ref{fig:nearly_cubic_20v_g3}. It contains a 3-edge-cut $M$ such that the two components of $G - M$ are isomorphic.

\subsection{Cubic graphs with exactly three hamiltonian cycles}
\label{subsect:results_thg}

It follows from Smith's result that a hamiltonian cubic graph contains at least three hamiltonian cycles. We devote this section to the study of the extremal case of cubic graphs containing precisely three hamiltonian cycles, the smallest of which is $K_4$. Note that these three hamiltonian cycles together cover each edge exactly twice and thus form a \emph{cycle double cover} with the smallest possible number of cycles. For a brief overview of results on cubic graphs with exactly three hamiltonian cycles, see~\cite{holton1999planar}. In such a graph $G$, for each vertex $v$ in $G$ any pair of distinct edges incident with $v$ is traversed by exactly one hamiltonian cycle of $G$. By starting with $K_4$ and replacing vertices by triangles one can construct for every $k \ge 2$ a planar 3-connected cubic graph of order $n = 2k$ with exactly three hamiltonian cycles.

These graphs clearly have girth~3. Are there triangle-free graphs with exactly three hamiltonian cycles? (Note that we do not require planarity at this point.) In order to address this question we make use of ideas of Chia and Yu~\cite{chia1995number}, which we now briefly introduce. Clearly, between any two vertices of a triangle $T$ in a cubic graph there is exactly one path visiting every vertex of the triangle, and $T$ is separated from the rest of the graph by a 3-edge-cut. Thus, we can replace $T$ with a so-called ``tup'', a graph introduced in~\cite{chia1995number}: A \emph{tup} is a graph in which all but three vertices are cubic, the three exceptional vertices have degree~2, and between any two such vertices there is exactly one hamiltonian path.

One can make from any cubic graph with three hamiltonian cycles a tup by removing one of the graph's vertices. Given a cubic graph $G$ and a vertex $v$ in $G$, one can replace $v$ with a tup $H$, where each neighbour of $v$ in $G$ is connected by an edge to one of the 2-valent vertices of $H$. The resulting graph will have the same number of hamiltonian cycles as $G$. In fact $G - v$ is a tup. More generally, when we consider two tups $H$ and $H'$ and join their 2-valent vertices by edges using a bijection, we say that we \emph{merge} $H$ and $H'$ and write $H \circ H'$ for the graph we obtain. When a vertex $v$ and a tup $H$ are merged, we are referring to the graph $H \circ v = (V(H) \cup \{ v \}, E(H) \cup \{ vx_1, vx_2, vx_3 \})$, where $x_1, x_2, x_3$ are the 2-valent vertices of $H$.

Chia and Yu~\cite{chia1995number} proved that a cubic graph $G$ has exactly three hamiltonian cycles if and only if $G$ is the merger of two tups, or of a vertex and a tup.
We shall use the family of {\em generalised Petersen graphs} $${\rm GP}(n,k) = \left( \{u_i, u'_i \}_{i = 0}^{n-1}, \{ u_i u_{i+1}, u_i u'_i, u'_i u'_{i+k} \}_{i=0}^{n-1} \right)\hspace{-1mm},$$ with indices mod $n$ and $k < n/2$.

%\jg{Klopt: zie see Tables~\ref{table:counts_num_cycles_cubic}-\ref{table:counts_num_cycles_cubic-g5} in de Appendix (en zie ook volgende pagina). Er zijn tot en met 32 toppen maar 2 triangle-free cubic graphs met exactly 3 HC's.}

Schwenk~\cite{schwenk1989enumeration} proved that $h({\rm GP}(n,2)) = 3$ if and only if $n \equiv 3 \ {\rm mod} \ 6$. These graphs have girth 5, so they yield an infinite family of triangle-free cubic graphs with exactly three hamiltonian cycles. The question now is whether there are other such graphs---the answer is \emph{yes, but no small ones}:

\begin{theorem}
The only triangle-free cubic graphs of order at most $32$ with exactly three hamiltonian cycles are ${\rm GP}(9,2)$ and ${\rm GP}(15,2)$. However, for every $k \ge 19$ there exists a cubic graph of order $2k$ and girth~$4$ having exactly three hamiltonian cycles, and for every $\ell \ge 17$ there exists a cubic graph of order $2\ell$ and girth~$5$ with exactly three hamiltonian cycles.
\end{theorem}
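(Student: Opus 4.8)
The statement splits into a finite, computational claim and an infinite, constructive one, and I would attack them separately.

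For the first sentence I would run the generation algorithm of Section~\ref{section:generation_algo}, specialised to cubic graphs, with the girth lower bound set to~$4$ and the target number of hamiltonian cycles set to~$3$, for every order up to~$32$. Since the algorithm outputs exactly one representative per isomorphism class (Theorem~\ref{thm:proof_complete} and its analogue for $k>1$), it suffices to check that the only two graphs surviving the filters are ${\rm GP}(9,2)$ and ${\rm GP}(15,2)$; that these two do have $h=3$ and girth~$5$ is Schwenk's result~\cite{schwenk1989enumeration}. This part needs no new theory.

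The two infinite families are where the work lies, and here the engine is the Chia--Yu machinery~\cite{chia1995number}: $h(H\circ H')=3$ and $h(H\circ v)=3$ automatically whenever $H,H'$ are tups, because a hamiltonian cycle must cross the connecting $3$-edge-cut exactly twice and so decomposes into one hamiltonian path on each side, of which there is exactly one per choice of the two crossing edges, giving three cycles in total. Thus producing cubic graphs with exactly three hamiltonian cycles reduces to producing tups, and since every tup has odd order (its degree sum is $3(t-1)$, hence even only when $t$ is odd), merging two tups of orders $t_1,t_2$ yields order $t_1+t_2$, while merging a tup of order $t$ with a single vertex yields order $t+1$. The plan is therefore to manufacture girth-$5$ (respectively girth-$4$) tups of enough distinct odd orders that these two operations realise every even order $2\ell$ with $\ell\ge 17$ (respectively $2k$ with $k\ge 19$). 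Girth is controlled separately: passing to a subgraph never decreases girth, so the only short cycles one must rule out are those running through the three new edges, and these are killed by choosing the merging bijection (and, for $H\circ v$, the deleted vertex) so that the three $2$-valent vertices are pairwise far apart in each tup. For base material I would take $T_n:={\rm GP}(n,2)-v$ with $n\equiv 3\pmod 6$, a tup of order $2n-1$ and girth $\ge 5$ by the earlier observation that each pair of edges at a vertex lies on exactly one hamiltonian cycle; merging two copies of $T_9$ already produces the smallest girth-$5$ member, of order $34$ ($\ell=17$).

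These $T_n$ realise only one residue class of orders, so the crux is to build \emph{extendable} tups: a gadget into which a fixed ``unit'' can be spliced an arbitrary number of times, each insertion raising the order by a small fixed even amount while (i) preserving the defining property that between each pair of $2$-valent vertices there is \emph{exactly} one hamiltonian path, and (ii) introducing no cycle shorter than~$5$ (for the girth-$5$ family, built from pentagonal units) or exactly one $4$-cycle and no triangle (for the girth-$4$ family, where a square unit is admissible and accounts for the larger threshold). Using units of two coprime step sizes, together with the residues supplied by the $T_n$, a Frobenius-type argument then yields tups of all sufficiently large odd orders, after which $H\circ H'$ and $H\circ v$ sweep out every required even order; the finitely many small orders not reached this way I would close off with explicit ad hoc mergers, several of which are already available as larger outputs of the order-up-to-$32$ computation. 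I expect the genuine obstacle to be step~(i): verifying, for \emph{every} number of inserted units, that the gadget has precisely one hamiltonian path between each pair of terminals. The clean route is a transfer-matrix/inductive argument in which each unit is shown to propagate the relevant traversal state deterministically, so that uniqueness for one unit bootstraps to uniqueness for all---designing the unit so that its transfer operator is essentially a permutation is exactly what keeps the count at three. Obtaining a pentagonal unit that simultaneously has this rigid traversal behaviour and maintains girth~$5$ through repeated splicing and the final merge is the delicate point, and is why the girth-$5$ construction is harder than the girth-$4$ one.
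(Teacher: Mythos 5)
Your framework is the right one and, at the level of strategy, matches the paper's: the finite claim is settled by exhaustive generation plus counting (the authors in fact generate \emph{all} cubic graphs up to order~32 with \textit{snarkhunter} and filter, rather than running the specialised generator of Section~\ref{section:generation_algo}, but either is a legitimate computation), and the infinite families are built by merging Chia--Yu tups, with ${\rm GP}(n,2)-v$ for $n\equiv 3 \pmod 6$ as base material and girth controlled by keeping the three $2$-valent terminals pairwise far apart. Your parity observation that tups have odd order and your derivation of $h(H\circ H')=h(H\circ v)=3$ are both correct, and $G_1\circ G_1$ of order $34$ is indeed the paper's starting point.

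The gap is that you never construct the order-increasing gadget, and you flag this yourself as ``the genuine obstacle'' and ``the delicate point''. The transfer-matrix/Frobenius machinery you sketch is both unexecuted and unnecessary. The paper's device is far simpler: given a tup $H$ with $2$-valent vertices $x,y,z$, form $H'=(V(H)\cup\{v_1,v_2\},\,E(H)\cup\{xv_1,v_1v_2,v_2y\})$. Then $v_1,v_2,z$ are the new terminals and $H'$ is again a tup of order exactly two more --- for each pair of new terminals the degree-$2$ constraints force the hamiltonian path to traverse $v_1,v_2$ in a prescribed way and reduce it to the unique hamiltonian path of $H$ between two of $x,y,z$. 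Iterating this on $G_1={\rm GP}(9,2)-v$ yields girth-$5$ tups of orders $17,19,21,23$ (the paper stops there, as the terminals begin to crowd together and further paths would create short cycles); a handful of explicit mergers of these, together with ${\rm GP}(21,2)$ and a tup derived from ${\rm GP}(15,2)$, covers all even orders $34\le n\le 48$, and the family ${\rm GP}(k,2)^*\circ T_n^*$ with $k\equiv 3\pmod 6$ and $n\in\{34,36,38,40,42,44\}$ covers all even $n\ge 50$. Without some such concrete operation your proposal does not actually produce tups in enough order classes, so the two infinite-family claims remain unproven. (A minor further point: the girth-$4$ threshold $2k\ge 38$ is not explained by a ``square unit'' being harder to control; the paper simply leaves open whether girth-$4$ examples of orders $34$ and $36$ exist.)
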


\begin{proof}
For the first statement we ran the program \textit{snarkhunter}~\cite{brinkmann2017generation,brinkmann_11} to generate all cubic graphs up to 32~vertices and used a separate program to count the number of hamiltonian cycles of the generated graphs. This allowed us to determine all cubic graphs with exactly 0, 3, or $>3$ hamiltonian cycles up to 32~vertices, see Tables~\ref{table:counts_num_cycles_cubic}--\ref{table:counts_num_cycles_cubic-g5} in the Appendix.

We now show the third statement. For $k \equiv 3 \ {\rm mod} \ 6$ we know that ${\rm GP}(k,2)$ is a cubic graph of order $2k$ and girth 5 which has exactly three hamiltonian cycles. Removing a vertex $v$ from ${\rm GP}(9,2)$ yields the tup $G_1$ of order 17 with 2-valent vertices $x,y,z$. Moreover, consider the tups $$G_2 = (V(G_1) \cup \{ v_1, v_2 \}, E(G_1) \cup \{ x v_1, v_1 v_2, v_2 y \}),$$ $$G_3 = (V(G_2) \cup \{ v_3, v_4 \}, E(G_2) \cup \{ v_2v_3, v_3v_4, v_4z \}),$$ and $$G_4 = (V(G_3) \cup \{ v_5, v_6 \}, E(G_3) \cup \{ v_4v_5, v_5v_6, v_6v_1 \}).$$
These graphs have order 19, 21, and 23, respectively.

Merging the tup $G_1$ with the tups $G_1, G_2, G_3, G_4$ yields graphs of order $34, 36, 38, 40$. For order $42$, consider ${\rm GP}(21,2)$. For order 44, merge $G_3$ and $G_4$ carefully (to ensure that the resulting graph has girth~5): if $v_1, v_3, v_4$ are the exceptional 2-valent vertices of $G_3$ and $v'_3$, $v'_5$, $v'_6$ are the exceptional 2-valent vertices of $G_4$, then join $v_1$ with $v'_6$, $v_3$ with $v'_3$, and $v_4$ with $v'_5$. For order 46, merge two copies $G_4$: if $v_3, v_5, v_6$ are the exceptional 2-valent vertices of the first copy and $v'_3$, $v'_5$, $v'_6$ are the exceptional 2-valent vertices of the second copy, then join $v_3$ with $v'_5$, $v_5$ with $v'_3$, and $v_6$ with $v'_6$. For order 48, consider ${\rm GP}(15,2)$, remove a vertex, and apply the same operation as above with which we obtained $G_2$ from $G_1$. We obtain a 31-vertex tup which we merge with $G_1$, which has order 17, yielding our desired graph. We have described cubic graphs $T_n$ of girth~5 containing exactly three hamiltonian cycles---the straightforward verification of the details, in particular concerning the girth requirement, are left to the reader---of all even orders $n$ in $34, \ldots, 48$.

For a graph $G$ we denote by $G^*$ the removal of an arbitrary vertex of $G$. For orders $\ge 50$, consider $$\{ {\rm GP}(k,2)^* \circ T_n^* : k \equiv 3 \ {\rm mod} \ 6, \ n \in \{ 34, 36, 38, 40, 42, 44 \} \}.$$
Similar techniques yield the theorem's second statement.
\end{proof}

%\jg{Misschien beter dit verder naar voor verplaatsen vermelden + aanvullen?}

% \jg{TODO: bespreking tabellen: enkel GP's gekend...?}

%\cz{

%Q1. Open cases for girth 4: all even numbers in 26, ..., 36. JG: cases 34 en 36 als open problem laten?

It remains an open question whether there exist cubic graphs of girth~4 with exactly three hamiltonian cycles that have order~34 or 36.

Using the program \textit{snarkhunter}~\cite{brinkmann2017generation,brinkmann_11} we also verified that there are no cubic graphs with exactly three hamiltonian cycles of girth~6 up to order 36, none of girth~7 up to order 40, none of girth~8 up to order 46, and none of girth~9 up to order~64.

\begin{theorem}
Let $G$ be a graph in which all vertices have odd degree, and containing exactly $p$ hamiltonian cycles, where $p$ is a prime number. Then $G$ is $3$-connected.
\end{theorem}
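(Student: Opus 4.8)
The plan is to argue by contradiction on the connectivity. First I would record the easy facts: since $G$ possesses a hamiltonian cycle it is $2$-connected, and since all its degrees are odd it has at least $4$ vertices (a triangle has even degrees, so $K_3$ is excluded). Moreover $p$ is \emph{odd}: if $p = 2$, then Lemma~\ref{Thom-lemma} would force at least $3 - 2 = 1$ vertex of even degree, contradicting the hypothesis. Thus it only remains to exclude a $2$-vertex cut, so suppose for contradiction that $\{u,v\}$ is one.

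The main structural step is to understand how hamiltonian cycles interact with the cut. I would first show that $G - \{u,v\}$ has exactly two components, say with vertex sets $A$ and $B$: deleting $u$ and $v$ from any hamiltonian cycle leaves two arcs, each of which is a path avoiding $u$ and $v$ and hence lies inside a single component of $G - \{u,v\}$; since the two arcs must cover every vertex, at most two components can occur, and as $\{u,v\}$ is a cut there are at least two. Writing $G_A = G[A \cup \{u,v\}]$ and $G_B = G[B \cup \{u,v\}]$, the same reasoning shows that $u$ and $v$ are never consecutive on a hamiltonian cycle (otherwise one arc would be the single edge $uv$ and the other would have to span both sides), and that each hamiltonian cycle of $G$ is obtained by gluing a hamiltonian $u$--$v$ path of $G_A$ to a hamiltonian $u$--$v$ path of $G_B$ along the shared endpoints. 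This correspondence is a bijection, so if $a$ and $b$ denote the numbers of hamiltonian $u$--$v$ paths in $G_A$ and $G_B$ respectively, then $h(G) = a \cdot b$. Because $G$ is hamiltonian we have $a, b \ge 1$.

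Now I would finish with the prime and parity constraints. Since $a \cdot b = p$ with $p$ prime and $a, b \ge 1$, one of the factors equals $1$; without loss of generality $a = 1$, so $G_A$ has a \emph{unique} hamiltonian $u$--$v$ path $P_A$. As $A \neq \emptyset$, the path $P_A$ has at least three vertices and therefore contains an edge $e$. Every hamiltonian cycle of $G$ uses $P_A$ as its $A$-arc, hence contains $e$; thus $e$ lies in exactly $p$ hamiltonian cycles. But $p$ is odd, contradicting Thomason's theorem~\cite{Th78} (already invoked in Lemma~\ref{Thom-lemma}), which guarantees that in a graph with all degrees odd every edge lies in an \emph{even} number of hamiltonian cycles. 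This contradiction shows that no $2$-cut exists, so $G$ is $3$-connected.

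I expect the delicate part to be the structural identity $h(G) = a \cdot b$: one must verify carefully that exactly two components arise, that $u$ and $v$ cannot be adjacent along a hamiltonian cycle, and that the gluing of side-paths is genuinely a bijection rather than merely a surjection. Everything after that---extracting a unit factor from a prime, and applying the even-edge-count theorem---is short, the only point worth stating explicitly being that $p \neq 2$, so that the count $p$ is genuinely odd.
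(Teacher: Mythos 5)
Your proof is correct, and while it opens exactly as the paper's does---$2$-connectivity from hamiltonicity, a hypothetical $2$-cut yielding exactly two components, the factorisation $h(G)=a\cdot b$ into counts of hamiltonian paths through the cut on each side, and primality forcing one factor to be $1$---it finishes by a genuinely different and cleaner route. The paper, having fixed the side $F$ with a unique hamiltonian $xy$-path, splits into three cases according to the parities of the degrees of the cut vertices inside $F$ and in each case builds an auxiliary all-odd-degree graph with exactly one or two hamiltonian cycles, contradicting the corollary of Thomason's theorem that a hamiltonian graph with all degrees odd has at least three hamiltonian cycles. You instead stay inside $G$: you first note $p\neq 2$ (via Lemma~\ref{Thom-lemma}, so $p$ is odd), observe that every hamiltonian cycle of $G$ must traverse the unique side-path $P_A$, hence any edge $e$ of $P_A$ lies in exactly $p$ hamiltonian cycles, and this odd count contradicts Thomason's edge-parity statement directly. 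This avoids both the case analysis and the auxiliary constructions, at the price of invoking the full edge version of Thomason's theorem rather than only its ``at least three hamiltonian cycles'' corollary---a negligible price, since the paper already quotes and uses the edge version in the proof of Lemma~\ref{Thom-lemma}. The one step you flag as delicate, the bijection behind $h(G)=a\cdot b$, is argued adequately: both components are nonempty, so the two arcs of a hamiltonian cycle off $\{u,v\}$ are nonempty and land in distinct components, $u$ and $v$ are never consecutive on a hamiltonian cycle, and the paths being internally disjoint makes the gluing a genuine bijection.
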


\begin{proof}
Since $p \ge 2$, $G$ is hamiltonian, so it must be 2-connected. Suppose $G$ has connectivity~2. Then it contains a 2-cut $X = \{ x,y \}$ whose removal from $G$ yields exactly two components $C, C'$. (If more than two components are present, we obtain a contradiction to the hamiltonicity of $G$ by a simple toughness argument.) Denote the degrees of $x$ and $y$ in $F = G[V(C) \cup X]$ by $d_x$ and $d_y$, respectively. We can assume that either $F$ or $G[V(C') \cup X]$, say $F$, contains exactly one hamiltonian $xy$-path: if they would both contain more than one such path, say $k \ge 2$ and $k' \ge 2$ respectively, then $G$ would contain $kk'$ hamiltonian cycles, but this is impossible as $G$ contains a prime number of hamiltonian cycles. Three situations can occur:

\smallskip

\noindent \textsc{Case 1.} \emph{Both $d_x$ and $d_y$ are even.} Consider the graph $$G_1 = (V(F) \cup \{ v_1, v_2, v_3, v_4 \}, E(F) \cup \{ xv_1, v_1v_2, v_1v_3, v_2v_3, v_2v_4, v_3v_4, v_4y \} ).$$ All vertices in $G_1$ have odd degree and $G_1$ contains exactly two hamiltonian cycles, as $F$ contains exactly one hamiltonian $xy$-path and $G_1[\{ x, y, v_i \}_{i=1}^4]$ contains exactly two hamiltonian $xy$-paths.

\smallskip

\noindent \textsc{Case 2.} \emph{$d_x$ is even while $d_y$ is odd.} Consider a copy $F'$ of $F - xy$ and denote the copy of $x$ ($y$) in $F'$ by $x'$ ($y'$). In $(F - xy) \cup F'$, identify $x$ with $y'$ and $y$ with $x'$. Denote the graph we obtain by $G_2$. All vertices in $G_2$ have odd degree and $G_2$ is uniquely hamiltonian.

\smallskip

\noindent \textsc{Case 3.} \emph{Both $d_x$ and $d_y$ are odd.} Let $$G_3 = (V(F) \cup \{ v_1, v_2 \}, E(F) \cup \{ xv_1, xv_2, v_1v_2, v_1y, v_2y \} ).$$ The graph $G_3$ contains only vertices of odd degree and $G_3$ contains exactly two hamiltonian cycles.

\smallskip

Each of the above cases leads to a contradiction, since by Thomason's theorem a hamiltonian graph in which all vertices are of odd degree contains at least three hamiltonian cycles.
\end{proof}

\noindent \textbf{Conjecture} (Cantoni~\cite{Tutte79}). \emph{Every planar cubic graph with exactly three hamiltonian cycles contains a triangle.} \hfill $({\mathfrak C})$

\bigskip

Every planar cubic graph $G$ with exactly three hamiltonian cycles has connectivity~$3$ by above theorem. Furthermore, every vertex-deleted subgraph of $G$ must be hamiltonian---in particular, $G$ cannot be bipartite---by Thomason's Corollary~1.5 in~\cite{Th78} which states that in any cubic graph $H$ we have $h(H - v) = h(H) \ {\rm mod} \ 2$ for every vertex $v$ in $H$.

It was shown by Fowler (see page 30 of~\cite{fowler}) that if $G$ is a planar cubic graph with exactly three hamiltonian cycles, then the statement ``$G$ contains a triangle'', i.e.\ $({\mathfrak C})$, is equivalent to the statement ``$G$ is uniquely edge-3-colourable.'' For further definitions and details we refer to Fowler's thesis.

Using the program \textit{plantri}~\cite{brinkmann_07} we generated all planar $3$-connected triangle-free cubic graphs up to 48~vertices and tested if any such graph contains exactly three hamiltonian cycles. This resulted in the following observation.

\begin{observation}\label{obs:conj_cantoni}
Cantoni's conjecture $({\mathfrak C})$ is true up to at least $48$ vertices. Furthermore, there are no planar $3$-connected cubic graphs of girth $5$ with exactly three hamiltonian cycles up to at least $68$~vertices. 
\end{observation}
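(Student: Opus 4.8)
The plan is to reduce the conjecture to an exhaustive but finite computer search, exploiting the structural restriction established above. Observe first that a planar cubic graph $G$ with exactly three hamiltonian cycles is a counterexample to $({\mathfrak C})$ precisely when it is triangle-free, so it suffices to verify that no triangle-free member of this class exists up to order~$48$. The decisive simplification comes from the theorem proved earlier in this section: since every vertex of a cubic graph has odd degree and $3$ is prime, any such $G$ is $3$-connected. This is exactly the class of graphs that \textit{plantri}~\cite{brinkmann_07} generates efficiently, which is what makes the search tractable.

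Concretely, I would run \textit{plantri} to produce, without repetition up to isomorphism, all planar $3$-connected cubic graphs of girth at least~$4$ (that is, triangle-free) on $n \le 48$ vertices. For each generated graph I would count its hamiltonian cycles with a dedicated routine and record whether the count equals~$3$. Confirming that this count is never~$3$ across the entire list establishes that $({\mathfrak C})$ admits no triangle-free counterexample---hence, by the $3$-connectivity reduction, no counterexample at all---up to order~$48$.

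For the second assertion the same pipeline applies, with the additional constraint that \textit{plantri} enforce girth at least~$5$. Because the number of planar $3$-connected cubic graphs grows far more slowly once $4$-cycles are also forbidden, the search can be pushed considerably further, to order~$68$, again verifying that none of the generated graphs has exactly three hamiltonian cycles.

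The main obstacle is computational rather than conceptual: both the number of candidate graphs and the cost of counting hamiltonian cycles grow rapidly with~$n$, so reaching orders $48$ and $68$ hinges on (i)~the $3$-connectivity reduction, which lets us invoke the specialised generator \textit{plantri} instead of filtering arbitrary planar graphs, and (ii)~a sufficiently fast hamiltonian-cycle counter. The girth-$5$ restriction in the second part is precisely what makes the larger bound of $68$ attainable, since it prunes the search space dramatically.
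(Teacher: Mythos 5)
Your proposal matches the paper's own argument: the authors likewise invoke the preceding theorem (all degrees odd, a prime number of hamiltonian cycles implies $3$-connectivity) to restrict attention to planar $3$-connected triangle-free cubic graphs, generate these with \textit{plantri} up to order~$48$ (and with girth at least~$5$ up to order~$68$), and count hamiltonian cycles of each generated graph. The approach and the reduction are essentially identical.
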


\subsection{On a question of Chia and Thomassen}
\label{subsect:chia_thomassen}

In contrast to Lemma~\ref{Thom-lemma}, it was shown by Chia and Yu~\cite{chia1995number} that for every $k \ge 3$ there exists a planar cyclically 3-edge-connected cubic graph with precisely $k$ hamiltonian cycles. 
%\jg{Is voor cubic graphs cyclically 3-edge-connected niet hetzelfde als 3-edge-connected en 3-connected?}
The argument is short: The 1-skeleta of prisms, i.e.\ the cartesian product of a cycle with $K_2$, deal with all $k \ne 4$, while carefully combining two Tutte-fragments (first used in~\cite{Tu46}) yields the $k = 4$ case (we note that this construction has cyclical edge-connectivity~3). Many non-hamiltonian planar 3-connected cubic graphs are known---take for instance Thomassen's infinite family~\cite{thomassen1981planar}. In fact, Thomassen's graphs as well as all prisms excluding the triangular one are cyclically 4-edge-connected.%, so together with Lemma~\ref{Thom-lemma} we have:
%\jg{TODO: nog herschrijven en zeggen dat de voorbeelden van Chia and Yu~\cite{chia1995number} voor $k = 3$ en grafen bekomen via de Tutte-fragment constructie voor $k=4$ allemaal een niet-triviale 3-edge-cut hebben?}

For $k = 4$ and cyclical edge-connectivity~4, we used the program \textit{plantri}~\cite{brinkmann_07} to generate all planar cyclically 4-edge-connected cubic graphs up to 48~vertices and tested if any such graph contains exactly four hamiltonian cycles. The result is as follows.

\begin{observation} \label{obs:pl_4HC}
The smallest planar cyclically $4$-edge-connected cubic graph with exactly four hamiltonian cycles has $38$ vertices and is shown in Figure~\ref{fig:planar_38v_4HC}. There are also exactly five such graphs on $42$ vertices, $32$ on $46$ vertices and six on $48$ vertices. These constitute all such graphs of order at most $48$.
\end{observation}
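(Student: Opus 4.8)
The statement is an exhaustive computational result, so the plan is a \emph{generate-and-test} pipeline run separately for each admissible order. Since the graphs sought are cubic, only even orders $n$ with $n \le 48$ need be considered, and since cyclical $4$-edge-connectivity forbids a $3$-edge-cut isolating a cycle, every such graph has girth at least~$4$ (in particular, is triangle-free), which is automatically enforced by the connectivity constraint. First I would use \textit{plantri}~\cite{brinkmann_07}, which is tailored to the generation of planar cubic graphs under prescribed connectivity constraints, to produce exactly one representative of each isomorphism class of planar cyclically $4$-edge-connected cubic graphs of order $n$, for every even $n \le 48$.

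For each generated graph I would count its hamiltonian cycles with a dedicated routine --- a depth-first backtracking search with degree- and reachability-based pruning is entirely adequate for cubic graphs of this size --- and retain only those with exactly four. Tabulating the survivors by order then yields the claim directly: one graph on $38$ vertices (the smallest, hence the one exhibited in Figure~\ref{fig:planar_38v_4HC}), none on $40$, five on $42$, none on $44$, thirty-two on $46$, and six on $48$, with none on any smaller order. The assertion that these ``constitute all such graphs of order at most~$48$'' is precisely the exhaustiveness of the generation, so no separate argument is needed once the enumeration is complete. It is worth noting why four is the relevant threshold at all: by Thomason's theorem no cubic graph has exactly one or two hamiltonian cycles, while a triangle-free planar cubic graph cannot have exactly three without contradicting the verified range of Cantoni's conjecture $({\mathfrak C})$; hence four is the smallest positive number of hamiltonian cycles a cyclically $4$-edge-connected planar cubic graph can attain, which is exactly the regime probed by the question of Chia and Thomassen.

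The main obstacle is computational scale together with trustworthiness, rather than any single clever step. The number of planar cyclically $4$-edge-connected cubic graphs grows steeply, so at $n = 48$ both the generation and the per-graph hamiltonian-cycle count must be efficient; the counting is the bottleneck and must be optimised with care. Because the result is a completeness claim, correctness is paramount: a bug that silently drops even one graph would falsify it. I would therefore cross-validate in two independent ways --- checking the raw output counts of \textit{plantri} for planar cubic graphs against independently known values, and re-counting the hamiltonian cycles of all candidate graphs with a second, separately written program --- as is done for the other computational results in this paper.
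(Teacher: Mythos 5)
Your proposal matches the paper's method exactly: the authors likewise use \textit{plantri} to generate all planar cyclically $4$-edge-connected cubic graphs up to $48$ vertices and then filter by counting hamiltonian cycles with a separate program, with independent cross-checks as described in their section on correctness testing. The only caveat is your side remark that four is the smallest attainable count: the case of exactly three hamiltonian cycles is ruled out only within the verified range of Cantoni's conjecture and remains open in general, but this does not affect the validity of the enumeration itself.
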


\begin{figure}[!htb]
\begin{center}
 \includegraphics[width=0.3\textwidth]{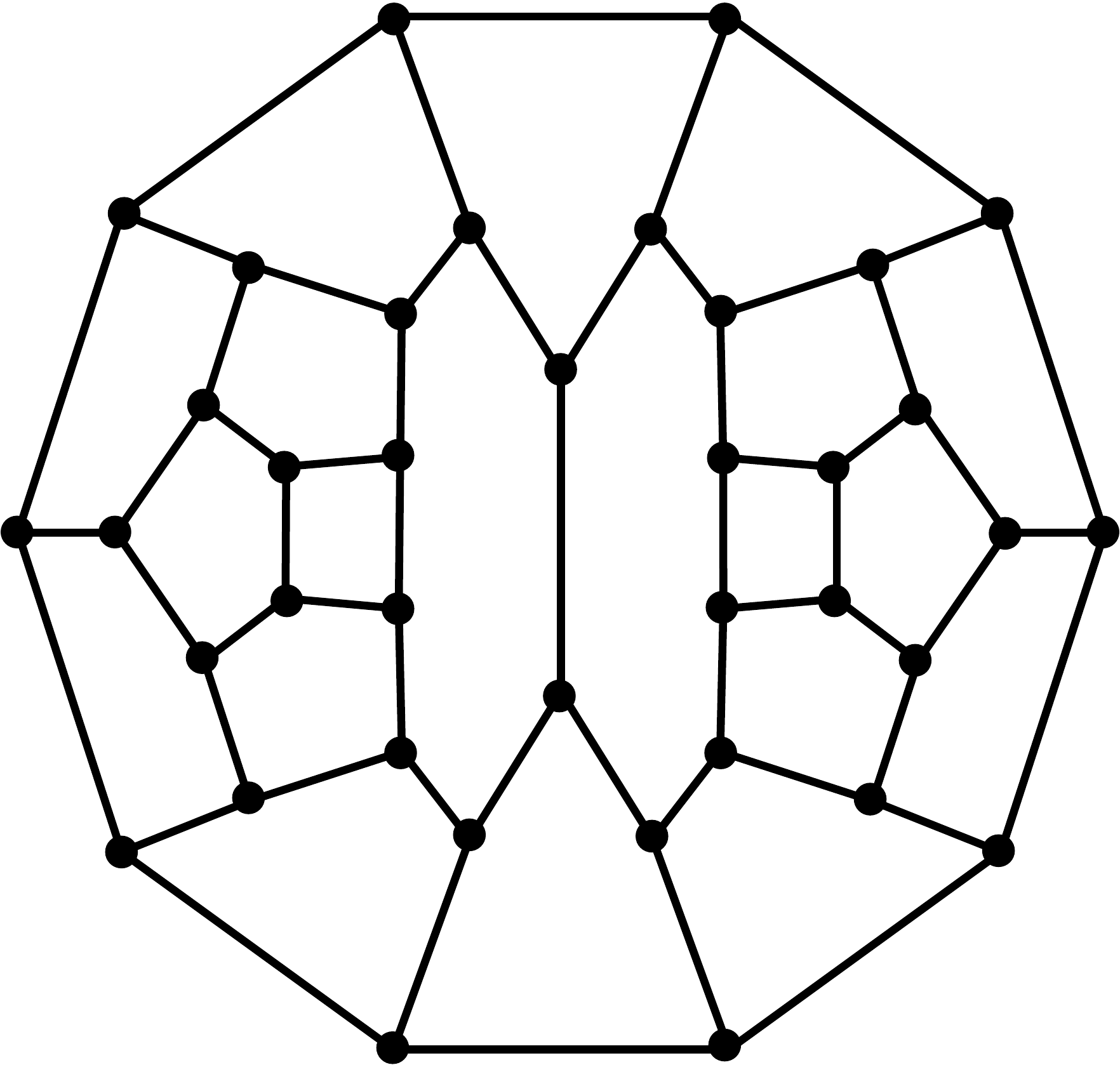}
	\caption{The smallest planar cyclically $4$-edge-connected cubic graph with exactly four hamiltonian cyles. It has 38 vertices.} %and $C_{2v}$ symmetry \cz{This symmetry comment needs a ref or should be removed -- I recommend the latter -- since $C_{2v}$ is not common enough}.
	%point group
	\label{fig:planar_38v_4HC}
\end{center}
\end{figure}

Drawings of the five planar cyclically $4$-edge-connected cubic graphs with exactly four hamiltonian cycles on $42$ vertices from Observation~\ref{obs:pl_4HC} can be found in Figure~\ref{fig:planar_42v_4HC} in the Appendix. Hence, together with Lemma~\ref{Thom-lemma}, we have:

\begin{theorem}
For every non-negative integer $k \notin \{ 1, 2, 3\}$ there exists a planar cyclically $4$-edge-connected cubic graph with exactly $k$ hamiltonian cycles, while for $k \in \{ 1, 2 \}$ there exist no such graphs with precisely $k$ hamiltonian cycles. 
\end{theorem}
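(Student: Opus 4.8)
The plan is to split the statement into its non-existence half ($k \in \{1,2\}$) and its existence half ($k = 0$ and $k \ge 4$), and to assemble the existence half from three sources already available above: Thomassen's graphs for $k = 0$, Observation~\ref{obs:pl_4HC} for $k = 4$, and prism graphs for the remaining values $k \ge 5$.

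For the non-existence half I would invoke Lemma~\ref{Thom-lemma} directly. A cubic graph has every vertex of degree $3$, hence no vertex of even degree; but Lemma~\ref{Thom-lemma} forces a graph with $h(G) \in \{1,2\}$ to contain at least $3 - h(G) \ge 1$ vertices of even degree. This contradiction rules out $h = 1$ and $h = 2$ for every cubic graph, a fortiori for planar cyclically $4$-edge-connected ones, settling $k \in \{1,2\}$. For the existence half, the cases $k = 0$ and $k = 4$ are immediate: Thomassen's infinite family~\cite{thomassen1981planar} supplies planar cyclically $4$-edge-connected cubic graphs that are non-hamiltonian, as noted above, and Observation~\ref{obs:pl_4HC} supplies the $38$-vertex planar cyclically $4$-edge-connected cubic graph with exactly four hamiltonian cycles. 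It then remains to realise every $k \ge 5$.

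Here I would use the prisms $Y_n = C_n \,\square\, K_2$, which are planar and cubic and, for $n \ge 4$, cyclically $4$-edge-connected (as recorded above). The key point is the count $h(Y_n) = n$ for odd $n$ and $h(Y_n) = n + 2$ for even $n$; taking odd $n \ge 5$ together with even $n \ge 4$ then realises precisely the set $\{5, 6, 7, \dots\}$ of all integers $\ge 5$, completing the existence half. (Note that no such prism attains the excluded value $k = 3$, which would require the triangular prism $Y_3$, and that $Y_3$ is only cyclically $3$-edge-connected; this is consistent with the theorem leaving $k = 3$ open.)

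To establish the prism count I would classify the hamiltonian cycles of $Y_n$ by the set $R$ of rung edges $u_iv_i$ they use. Since every top vertex lies on a used top edge, at a rung-position the cycle uses exactly one incident top (resp.\ bottom) edge, while at a non-rung position it uses both; hence the used top edges form arcs whose endpoints are exactly the rung-positions, and likewise on the bottom. A short case analysis on the gaps between consecutive rung-positions then shows that $R$ must consist either of two adjacent rungs, yielding $n$ distinct cycles, or of all $n$ rungs, yielding (only when $n$ is even) two further cycles coming from the two choices of matchings of opposite parity on the top and bottom $n$-cycles. The main obstacle is exactly this step: ruling out every intermediate rung-set $2 < |R| < n$. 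I would handle it through an auxiliary bipartite ``segment graph'' whose vertices are the top- and bottom-arcs, whose edges are the rungs, and in which every arc has degree $2$; a hamiltonian cycle of $Y_n$ corresponds to this segment graph being a single cycle, and one checks that for $2 < |R| < n$ the arc/gap constraints force it to split into several components, so no hamiltonian cycle can arise. Alternatively, this count is available from the analysis of Chia and Yu~\cite{chia1995number}, whose prism construction is precisely what realises all $k \ge 3$ with $k \ne 4$.
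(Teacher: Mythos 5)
Your proposal is correct and follows essentially the same route as the paper: Lemma~\ref{Thom-lemma} rules out $k\in\{1,2\}$, Thomassen's non-hamiltonian family handles $k=0$, the computer-found $38$-vertex graph of Observation~\ref{obs:pl_4HC} handles $k=4$, and the prisms $C_n\,\square\,K_2$ with $n\ge 4$ (cyclically $4$-edge-connected once the triangular prism is excluded) cover all $k\ge 5$. The only difference is that you supply a self-contained, and correct, derivation of the prism counts $h(C_n\,\square\,K_2)=n$ for odd $n$ and $n+2$ for even $n$, which the paper simply attributes to Chia and Yu.
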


Thus only the case $k = 3$ remains open, which relates to a conjecture due to Cantoni on which we focus in Section~\ref{subsect:results_thg}. We remark that if one subdivides one of the quadrilaterals present in the graph depicted in Figure~\ref{fig:planar_38v_4HC} into an odd number of quadrilaterals by adding an even number $2k$ of (parallel) edges---we have illustrated the result of this operation for $k = 1$ in Figure~\ref{fig:planar_42v_4HC_subdiv} in the Appendix---, it follows that for every $\ell \ge 0$ there exists a planar cyclically $4$-edge-connected cubic graph of order $38 + 4\ell$ containing exactly four hamiltonian cycles. One can apply the same procedure to the quadrilateral emphasised in the 48-vertex graph shown in Figure~\ref{fig:planar_48v_4HC_ladder_base}, which yields the 52-vertex graph shown in Figure~\ref{fig:planar_48v_4HC_ladder_expanded} when performing the operation for $k=1$.
%\jg{Opgepast: "obtained when adding two edges" klopt niet helemaal, want we voegen ook toppen toe... Ik zou de figuur hier in de tekst plaatsen en niet in de Appendix omdat het een verduidelijkende figuur is en ze toch niet veel plaats inneemt (en als de editor erop staat kunnen we het later nog altijd verplaatsen).}

% \cz{Ik stel voor in de Appendix, helemaal op het einde, een nieuwe figuur toe te voegen met links een graaf met 48 toppen en een 4-hoek in het grijs of dikker waarop we de operatie kunnen toepassen, en rechts een 52-toppen graaf met het resultaat van de toepassing. De eisen aan de 4-hoek $abcd$ zijn gemakkelijk: $G - ab - cd$ moet niet-hamiltoniaans zijn (dan kunnen we toppen van de ladder toevoegen op de bogen $ab$ en $cd$). Ik denk de 4-hoek links in je graph2 (uit je email van 24 april) voldoet eraan, maar controleer dat aub} \jg{Ik heb het allemaal gecontroleerd met de computer: de graaf $G - ab - cd$ is inderdaad niet-hamiltoniaans en de graaf met 52 toppen uit Figure~\ref{fig:planar_48v_4HC_ladder_expanded} heeft inderdaad exact 4 HC's.}
Together with Observation~\ref{obs:pl_4HC}, we obtain:

\begin{figure}[h!tb]
    \centering
   \subfloat[]{\label{fig:planar_48v_4HC_ladder_base}\includegraphics[width=0.28\textwidth]{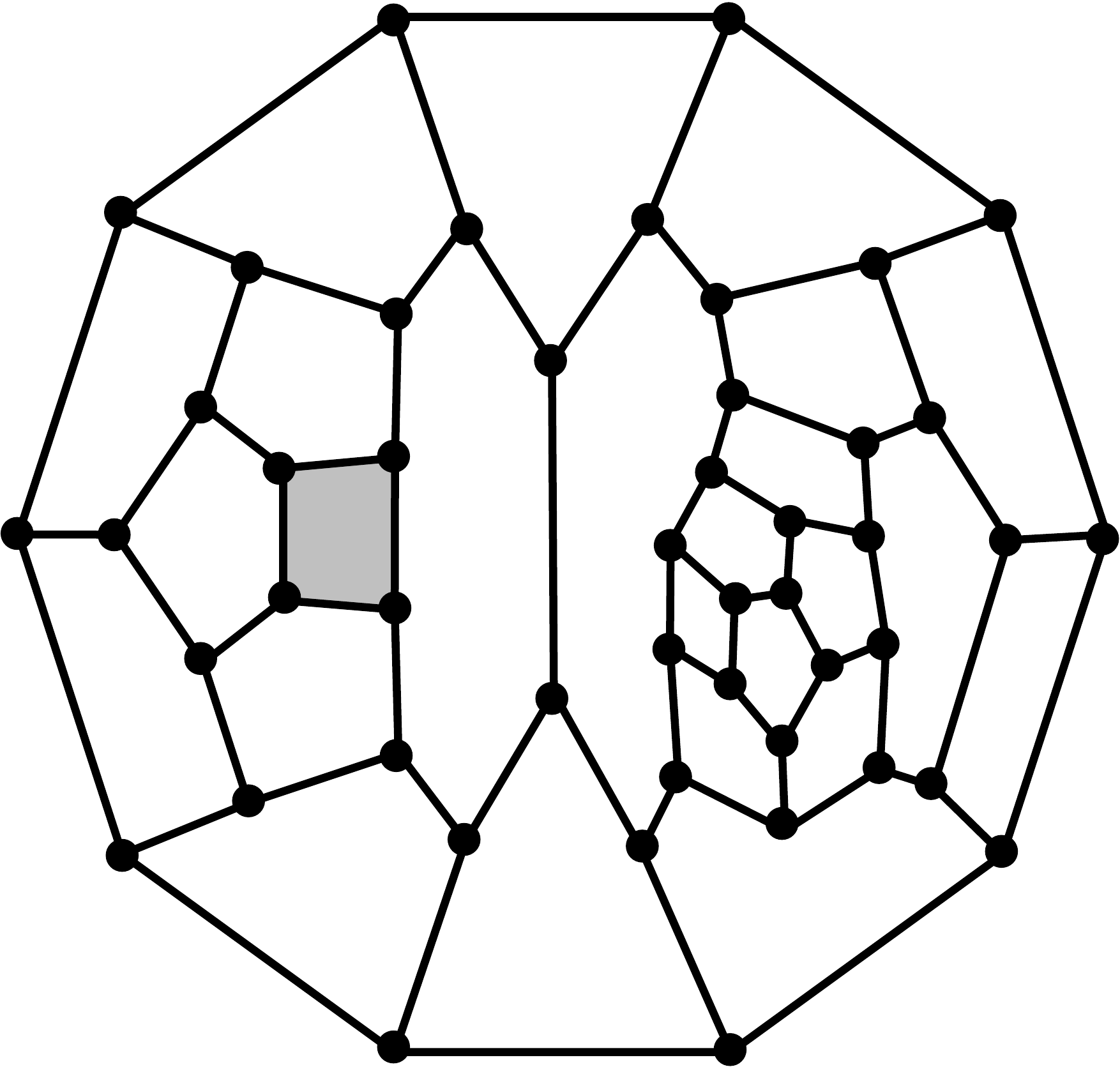}} \qquad \qquad
    \subfloat[]{\label{fig:planar_48v_4HC_ladder_expanded}\includegraphics[width=0.28\textwidth]{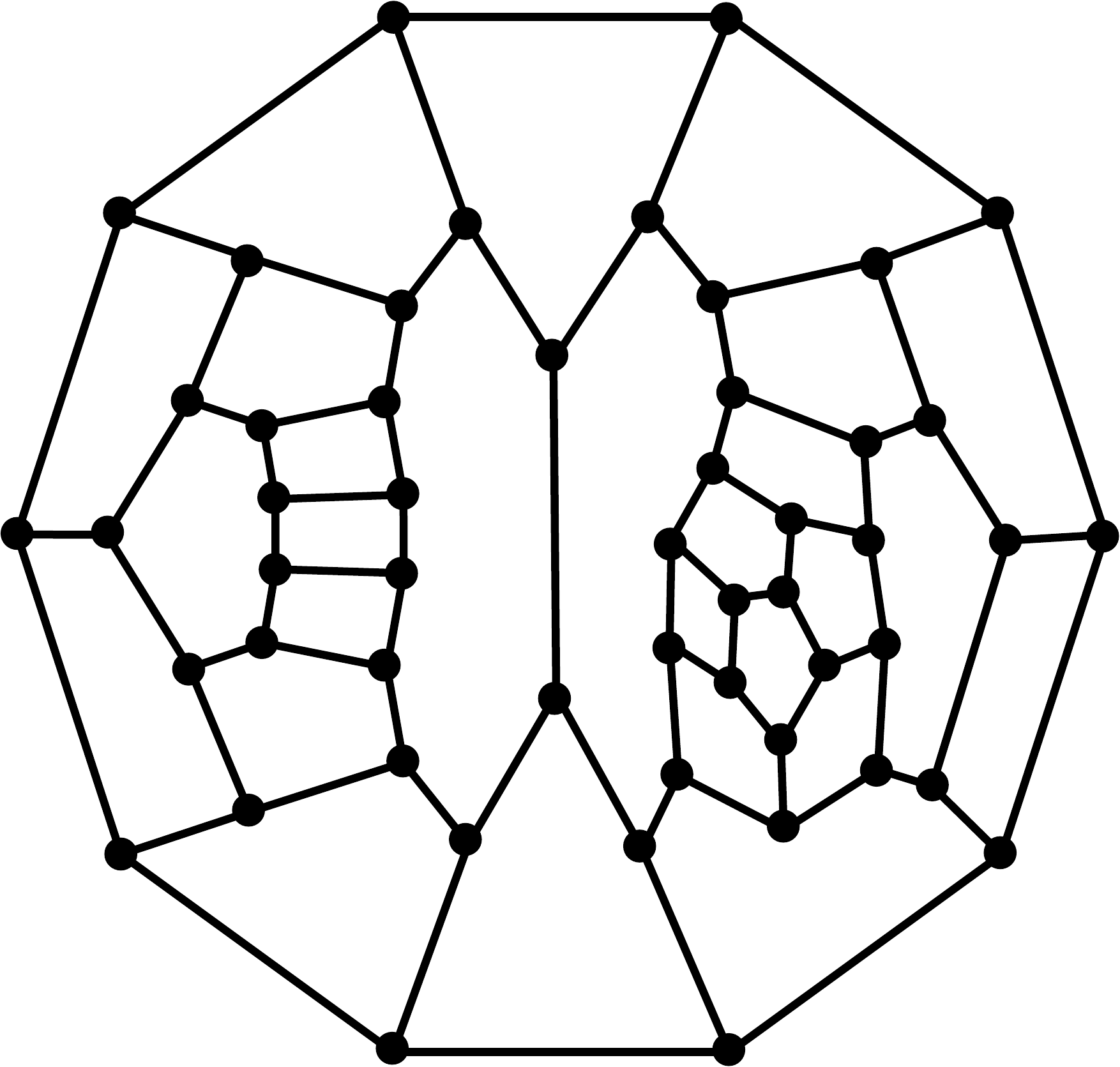}}
    \caption{A planar cyclically $4$-edge-connected cubic graph with exactly four hamiltonian cyles on 48 vertices (left-hand side) and the planar cyclically $4$-edge-connected cubic graph with exactly four hamiltonian cyles on 52 vertices obtained from it by subdividing the emphasised quadrilateral (right-hand side).}
	\label{fig:planar_48v_4HC_ladder}
\end{figure}

\begin{theorem}
There exists a planar cyclically $4$-edge-connected cubic graph of order~$n$ with exactly four hamiltonian cycles if and only if $n \in \{ 38, 42 \}$ or $n \ge 46$ is even.
\end{theorem}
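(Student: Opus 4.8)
The plan is to derive the characterisation by combining the two infinite constructions already described with the exhaustive data of Observation~\ref{obs:pl_4HC}, using only the elementary fact that a cubic graph has even order. Throughout, let $\mathcal{G}$ denote the class of planar cyclically $4$-edge-connected cubic graphs having exactly four hamiltonian cycles.

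For the existence (``if'') direction, I would use that subdividing a suitable quadrilateral of the $38$-vertex graph of Figure~\ref{fig:planar_38v_4HC} produces, for every $\ell \ge 0$, a member of $\mathcal{G}$ of order $38 + 4\ell$, and that the same operation applied to the emphasised quadrilateral of the $48$-vertex graph of Figure~\ref{fig:planar_48v_4HC_ladder_base} produces, for every $\ell \ge 0$, a member of $\mathcal{G}$ of order $48 + 4\ell$ (the case $\ell = 1$ being the graph of Figure~\ref{fig:planar_48v_4HC_ladder_expanded}). The first family realises exactly the orders $\equiv 2 \pmod{4}$ that are at least $38$, the second exactly the orders $\equiv 0 \pmod{4}$ that are at least $48$. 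It then remains to observe the simple covering fact: every even $n \ge 46$ lies in one of these two progressions, since $n \equiv 2 \pmod{4}$ forces $n = 38 + 4\ell$ with $\ell \ge 2$, while $n \equiv 0 \pmod{4}$ forces $n \ge 48$ and hence $n = 48 + 4\ell$ with $\ell \ge 0$. The two exceptional orders $38$ and $42$ are the cases $\ell = 0$ and $\ell = 1$ of the first family. This settles existence for every order in the statement.

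For the non-existence (``only if'') direction, I would argue as follows. Since every cubic graph has even order, no member of $\mathcal{G}$ has odd order. For even order at most $48$, Observation~\ref{obs:pl_4HC} lists all members of $\mathcal{G}$ and shows their orders to be precisely $38, 42, 46$, and $48$; consequently $\mathcal{G}$ contains no graph of any even order at most $36$, nor of order $40$, nor of order $44$. Together with the parity constraint this excludes exactly the orders absent from the statement, completing the characterisation.

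The conceptual core of the argument---and the step I expect to be the main obstacle---is not this bookkeeping but the justification that the ladder-subdivision underlying both families keeps a graph inside $\mathcal{G}$. That it preserves cubicity and planarity is immediate, since replacing a single quadrilateral face by an \emph{odd} number of quadrilateral faces leaves every vertex cubic and respects the embedding. The two genuinely delicate points are that cyclic $4$-edge-connectivity survives---so one must check the inserted ladder creates no cyclic edge-cut of size below~$4$---and that the number of hamiltonian cycles remains exactly four. For the latter, the key structural observation is that each hamiltonian cycle of the host graph meets the inserted ladder in a single spanning path joining two of the four attachment vertices carrying the external edges, and that such a path exists precisely when those two vertices are adjacent around the original quadrilateral. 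Requiring an \emph{even} number of added rungs is exactly what makes this persist: by a bipartite-parity argument the two ``long-side'' spanning paths remain realisable (and unique) only for an even number of rungs, while the ``diagonal'' ones stay forbidden, so the admissible traversals of the ladder are in bijection with those of the original quadrilateral. One must finally verify, for the two base graphs, that none of their four hamiltonian cycles uses all four external edges of the chosen quadrilateral---only then does the absence of the more numerous two-path traversals of the longer ladder guarantee that the count stays pinned at four.
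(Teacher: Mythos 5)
Your proposal matches the paper's argument: the paper likewise combines the exhaustive search result (Observation 3.13, which pins down the orders at most 48 as exactly 38, 42, 46, 48) with the two quadrilateral-subdivision families of orders $38+4\ell$ and $48+4\ell$, and the same covering argument for even $n\ge 46$. Your additional sketch of why the ladder insertion preserves cyclic 4-edge-connectivity and the hamiltonian cycle count is a reasonable elaboration of a step the paper delegates to the cited article of Pivotto and Royle rather than proving itself.
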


This gives a (negative) answer to~\cite[Question 1]{chia2012} of Chia and Thomassen; they asked whether every planar cyclically 4-edge-connected cubic graph on $n$ vertices contains at least $n/2$ longest cycles. We remark that a similar result has recently (and independently) been obtained by Pivotto and Royle~\cite{pivotto2019}. For details on the operation adding edges to a quadrilateral used above, we refer to their article.

\subsection{Uniquely traceable graphs}
\label{subsect:results_utg}

Motivated by results on uniquely hamiltonian graphs, we now study a natural variation of the concept and call a graph \emph{uniquely traceable} if it contains exactly one hamiltonian path. Sheehan's~\cite[Theorem~1]{sheehan1977graphs} states that a uniquely hamiltonian graph of order $n \ge 3$ has size at most $\lfloor \frac{n^2}{4} \rfloor + 1$. This upper bound is sharp as Sheehan constructed for every $n \ge 3$ an $n$-vertex uniquely hamiltonian graph of maximum size. He claims in~\cite[Theorem~2]{sheehan1977graphs} that these are the only graphs reaching the bound. However, Barefoot and Entringer showed that for $n \ge 7$ there are exactly $2^{\lceil \frac{n}{2} \rceil - 4}$ such graphs~\cite{barefoot1981census}, so for uniquely hamiltonian graphs of order at most 8 Sheehan's Theorem~2 does hold, but for $n \ge 9$ it does not. Sheehan writes~\cite{sheehan1977graphs}: ``An analogous result to Theorem~1 can easily be obtained when instead of Hamiltonian circuits we consider Hamiltonian paths. In this case $h(n)$ [the maximum size of an $n$-vertex uniquely hamiltonian graph] is replaced by $[n^2/4]-[n/2]+ 1$, and the extremal graph is again unique.'' In the following we shall confirm Sheehan's size bound, but disprove his uniqueness claim.

\begin{observation}\label{obs:ut}
Let $G$ be a uniquely traceable graph. Then $G$ has exactly two vertices of degree~$1$, namely the starting point and endpoint of the hamiltonian path. In particular, $G$ has connectivity~$1$. Furthermore, identifying the endpoints of the hamiltonian path of $G$, we obtain a uniquely hamiltonian graph. Conversely, if a uniquely hamiltonian graph contains a vertex of degree~$2$, then splitting this vertex into two vertices of degree~$1$ yields a uniquely traceable graph.
%\begin{enumerate}
%\item A uniquely traceable graph has exactly two vertices of degree~$1$, namely the starting point and endpoint of the hamiltonian path. In particular, every uniquely traceable graph has connectivity~$1$.
%\item Let $G$ be a uniquely traceable graph. Identifying the endpoints of the hamiltonian path of $G$, we obtain a uniquely hamiltonian graph. Conversely, if $G'$ is a uniquely hamiltonian graph containing a vertex of degree~$2$, then splitting this vertex into two vertices of degree~$1$ yields a uniquely traceable graph.
%\end{enumerate}
\end{observation}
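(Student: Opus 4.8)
The plan is to prove the four assertions in turn, all resting on one structural fact: in a graph with a unique hamiltonian path $P = v_1 v_2 \cdots v_n$, the endpoints $v_1$ and $v_n$ are forced to have degree~$1$. To establish this I would first note that every internal vertex $v_i$ (with $2 \le i \le n-1$) is already incident with the two path-edges $v_{i-1}v_i$ and $v_iv_{i+1}$, so no internal vertex can have degree~$1$; only $v_1$ and $v_n$ are candidates. To see that $v_1$ really does have degree~$1$, I would argue by contradiction: if $v_1$ had a neighbour $v_j$ with $j \ge 3$, then reversing the initial segment produces the hamiltonian path $v_{j-1}v_{j-2}\cdots v_1 v_j v_{j+1}\cdots v_n$, whose endpoint $v_{j-1} \ne v_1$, contradicting uniqueness. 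The symmetric argument handles $v_n$, so $G$ has exactly the two degree-$1$ vertices $v_1$ and $v_n$. Since a degree-$1$ vertex must be an endpoint of every hamiltonian path, this simultaneously shows that \emph{every} hamiltonian path of $G$ has endpoints $v_1$ and $v_n$, a fact I will reuse below.

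Connectivity~$1$ is then immediate: $G$ is connected (it contains a spanning path) and, for $n \ge 3$, deleting the unique neighbour $v_2$ of $v_1$ isolates $v_1$, whence $\kappa(G) \le 1$ and thus $\kappa(G) = 1$.

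For the identification claim I would let $G'$ be the graph obtained by merging $v_1$ and $v_n$ into a single vertex $w$. The key point is that $w$ has degree exactly~$2$ in $G'$, with neighbours $v_2$ and $v_{n-1}$, so any hamiltonian cycle of $G'$ is forced to use both edges $wv_2$ and $wv_{n-1}$; deleting $w$ from such a cycle leaves a hamiltonian path from $v_2$ to $v_{n-1}$ in $G' - w = G - \{v_1,v_n\}$. This sets up a bijection between the hamiltonian cycles of $G'$ and the hamiltonian paths of $G$ with endpoints $v_1,v_n$, which by the first part are all of the hamiltonian paths of $G$. As $G$ has exactly one, $G'$ is uniquely hamiltonian. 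The converse runs symmetrically: given a uniquely hamiltonian $H$ with a degree-$2$ vertex $u$ of neighbours $p$ and $q$, I would split $u$ into pendant vertices $u_1$ (joined to $p$) and $u_2$ (joined to $q$). Since $u$ has degree~$2$, every hamiltonian cycle of $H$ uses both $up$ and $uq$, so the hamiltonian cycles of $H$ correspond bijectively to the hamiltonian paths from $p$ to $q$ in $H - u$; these in turn correspond to the hamiltonian paths of the new graph, which---being forced to start and end at the degree-$1$ vertices $u_1,u_2$---are exactly the paths $u_1\,p\cdots q\,u_2$. Uniqueness therefore transfers, and the result is uniquely traceable.

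I expect the genuinely non-routine step to be the swap argument excluding extra degree-$1$ vertices, where one must check that the rerouted sequence really is a hamiltonian path and really differs from $P$; everything else is the bookkeeping needed to turn ``merge/split at a degree-$2$ vertex'' into an honest bijection between hamiltonian structures (via the degree-$2$ forcing), together with excluding the degenerate small cases---most notably $n = 3$, where identifying the two endpoints of $P_3$ would create a parallel edge---so that the intermediate objects stay simple graphs.
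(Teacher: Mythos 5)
Your proposal is correct and follows essentially the same route as the paper: the identical endpoint-rotation argument (a chord $v_1v_j$ yields the second hamiltonian path $v_{j-1}\cdots v_1 v_j\cdots v_n$) to force the two pendant vertices, and the same degree-$2$ forcing at the identified/split vertex to transfer uniqueness between hamiltonian cycles and hamiltonian paths. Your extra care about the bijection and the small-order degeneracies only makes explicit what the paper leaves implicit.
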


\begin{proof}
We denote the unique hamiltonian path of $G$ by $v_1 \ldots v_n$. Assume that the degree of $v_1$ is at least 2. Then $v_1$ is adjacent to $v_i$ for some $i \ge 3$. Thus $v_{i-1} v_{i-2} \ldots v_1 v_i v_{i+1} \ldots v_n$ is a second hamiltonian path in $G$, a contradiction. So $v_1$ and analogously $v_n$ must have degree~1.

Denote the graph resulting from the identification of $v_1$ and $v_n$ by $G'$, and let $v$ be the vertex in $G'$ obtained by identifying $v_1$ and $v_n$. Combining that $G' - v$ contains exactly one hamiltonian $v_2 v_{n-1}$-path (otherwise $G$ would not be uniquely traceable) with the fact that, due to the argument above, $v$ has degree~2, the graph $G'$ is uniquely hamiltonian. The converse argument is very similar.
\end{proof}

\begin{theorem}
Let $n \ge 2$. Then the following hold.
\begin{enumerate}
\item A uniquely traceable graph of order $n$ has at most $s(n) = \lfloor \frac{(n - 1)^2}{4} \rfloor + 1$ edges.
\item For every $m$ such that $n - 1 \le m \le s(n)$ there exists a uniquely traceable graph of order $n$ and size $m$.
\item There exist exactly $\max\left\{1, 2^{\lceil \frac{n-1}{2} \rceil - 3} \right\}$ uniquely traceable graphs of order $n$ and size $s(n)$.
\end{enumerate}
\end{theorem}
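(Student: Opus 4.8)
The plan is to route everything through the correspondence recorded in Observation~\ref{obs:ut}: identifying the two degree-$1$ endpoints $v_1,v_n$ of the unique hamiltonian path of a uniquely traceable graph $G$ of order $n$ produces a uniquely hamiltonian graph $\Phi(G)$ of order $n-1$, and conversely every uniquely hamiltonian graph of order $n-1$ with a chosen degree-$2$ vertex arises in this way by splitting that vertex. For $n \ge 4$ the two affected edges $v_1v_2$ and $v_{n-1}v_n$ stay distinct, so $\Phi$ preserves the number of edges; hence $|E(G)| = |E(\Phi(G))|$. Statement~1 then follows at once from Sheehan's bound~\cite{sheehan1977graphs}, since $|E(G)| = |E(\Phi(G))| \le \lfloor (n-1)^2/4 \rfloor + 1 = s(n)$. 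The degenerate cases $n \in \{2,3\}$, where identification would create a loop or a multi-edge, I would settle by hand: the only uniquely traceable graphs are $K_2$ and $P_3$, of sizes $1 = s(2)$ and $2 = s(3)$.

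For Statement~2 I would first note that deleting an edge can never create a hamiltonian path, so if $G$ is uniquely traceable with unique hamiltonian path $\mathfrak{p}$ and $e$ is a \emph{chord} (an edge off $\mathfrak{p}$), then $G-e$ is again uniquely traceable with the same path $\mathfrak{p}$. Deleting chords one at a time down to the spanning path $P_n$ therefore shows that the sizes realised by uniquely traceable subgraphs of a fixed uniquely traceable graph $G$ form exactly the integer interval $\{n-1,\ldots,|E(G)|\}$. Applying this to a maximum-size uniquely traceable graph (whose existence follows by splitting a degree-$2$ vertex of a maximum-size uniquely hamiltonian graph of order $n-1$, such a vertex being provided by the structural analysis below) realises every $m$ with $n-1 \le m \le s(n)$.

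For Statement~3 I would count isomorphism classes through $\Phi$. A uniquely traceable graph of order $n$ and size $s(n)$ maps to a uniquely hamiltonian graph of order $n-1$ and size $\lfloor (n-1)^2/4\rfloor+1$, i.e.\ a \emph{maximum-size} one, and splitting reverses this. Two degree-$2$ vertices of such a graph $H$ yield isomorphic uniquely traceable graphs precisely when they lie in one orbit of $\mathrm{Aut}(H)$, since any isomorphism of the split graphs descends, after re-identifying the endpoints, to an automorphism of $H$ matching the two split vertices. Thus the number of maximum-size uniquely traceable graphs of order $n$ equals the total number of orbits of degree-$2$ vertices, summed over all maximum-size uniquely hamiltonian graphs of order $n-1$. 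Combining this with the Barefoot--Entringer count~\cite{barefoot1981census} of $2^{\lceil (n-1)/2\rceil-4}$ such graphs (valid for $n-1 \ge 7$), the theorem reduces to the structural claim that every maximum-size uniquely hamiltonian graph of order $N \ge 7$ has exactly two orbits of degree-$2$ vertices; this gives $2\cdot 2^{\lceil (n-1)/2\rceil-4} = 2^{\lceil (n-1)/2\rceil-3}$ for $n \ge 8$, while the orders $n \le 7$ (equivalently $N \le 6$) are checked directly and each yields a single extremal graph, matching the $\max\{1,\cdot\}$.

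The main obstacle is exactly this structural claim, as it depends on the fine chord structure of the extremal graphs rather than on any soft argument. To establish it I would invoke the explicit description of the Barefoot--Entringer extremal uniquely hamiltonian graphs and locate their degree-$2$ vertices, showing that for $N \ge 7$ each such graph has precisely two vertices of degree~$2$ occupying inequivalent positions, so that no automorphism interchanges them, whereas for the more symmetric small orders $N \le 6$ the two degree-$2$ vertices instead form a single orbit, producing the drop to a unique maximum-size uniquely traceable graph. Pinning down these orbits, and confirming that distinct orbits and non-isomorphic $H$ really do give non-isomorphic uniquely traceable graphs, is the delicate step of the argument.
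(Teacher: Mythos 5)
Your proposal is correct and takes essentially the same route as the paper: both prove the size bound and the extremal count by passing through the identification/splitting correspondence of Observation~\ref{obs:ut}, realise intermediate sizes by deleting chords, and settle the delicate step---that the two degree-$2$ vertices of each Barefoot--Entringer extremal graph yield non-isomorphic splits---by invoking their explicit structural description (a subpath with degrees $2,\,n-1,\,2,\,n-2,\,3$ forces the two splits apart by a valency argument). Your orbit-counting phrasing of statement~3 is just a cleaner bookkeeping of the paper's same argument.
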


\begin{proof}
Barefoot and Entringer~\cite{barefoot1981census} showed that for every $n \ge 7$ there exist uniquely hamiltonian graphs of order $n$ and size $s(n+1)$, and no such graphs of greater size. Statement~1 now follows: for $n < 8$ it is elementary to verify, so consider henceforth $n \ge 8$. Assume that there exists an $n$-vertex uniquely traceable graph $G$ of size greater than $s(n)$. Using Observation~\ref{obs:ut}, if we identify the 1-valent vertices of $G$, we obtain a uniquely hamiltonian graph of order $n - 1$ and size greater than $s(n)$. This however contradicts the result of Barefoot and Entringer.

They also describe the following crucial property of uniquely hamiltonian graphs of order $n$ and size $s(n+1)$: there is always a 5-vertex subpath of the hamiltonian cycle such that the degrees of these vertices are $2, n-1, 2, n-2, 3$, respectively, and all other vertices have degree more than 2 and less than $n - 2$. Thus, if we use the two vertices of degree~2 to produce uniquely traceable graphs (see Observation~\ref{obs:ut}), we obtain two non-isomorphic graphs by a valency argument. Two uniquely traceable graphs that are constructed from non-isomorphic uniquely hamiltonian graphs of order $n$ and size $s(n+1)$ must themselves be non-isomorphic. Barefoot and Entringer showed that there are exactly $2^{\lceil \frac{n}{2} \rceil - 4}$ uniquely hamiltonian graphs of order $n$ and maximum size. We thus obtain $2^{\lceil \frac{n}{2} \rceil - 3}$ uniquely traceable graphs of order $n + 1$ and maximum size. Consider such a uniquely traceable graph $G$ of maximum size. In $G$, we call edges not belonging to its unique hamiltonian path \emph{chords}. Removing chords from $G$ one-by-one yields statement 2.

For statement 3, suppose that there are more than $2^{\lceil \frac{n-1}{2} \rceil - 3}$ uniquely traceable graphs of order $n$. This means that there exists a uniquely traceable graph $G$ of order $n$ and size $s(n)$ which was not obtained from an $(n-1)$-vertex uniquely hamiltonian graph $G'$ of size $s(n)$ by splitting a vertex of degree~2 into two 1-valent vertices. However, identifying in $G$ its 1-valent vertices, by Observation~\ref{obs:ut} we obtain a uniquely hamiltonian graph of order $n - 1$ and, since identifying did not change the size, exactly $s(n)$ edges, a contradiction. Hence, each uniquely traceable graph of maximum size can be constructed from a uniquely hamiltonian graph of maximum size, and thus we find all uniquely traceable graphs of order $n \ge 8$ and maximum size from the uniquely hamiltonian graphs of order $n - 1$.

Finally, for $n \le 7$ the unique uniquely traceable graphs of order $n$ and size $s(n)$ are the paths for $n \le 4$ and the graphs shown in Figure~\ref{fig:ut} for $n \in \{ 5, 6, 7\}$. (These can be obtained from the uniquely hamiltonian graphs of order at most 6 and of maximum size.) \end{proof}

\begin{figure}[!ht]
\begin{center}
\includegraphics[height=15mm]{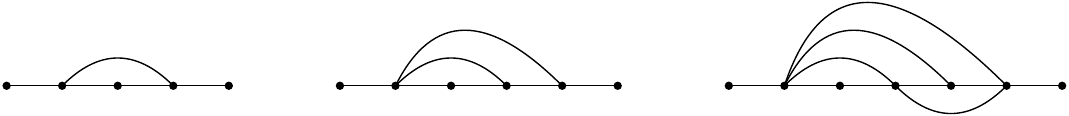}
	\caption{The unique uniquely traceable graphs of maximum size and order 5, 6, and 7.}
	\label{fig:ut}
\end{center}
\end{figure}

\subsection{Remarks on computational results}
\label{sect:comput_results}

We implemented the algorithm from Section~\ref{section:generation_algo} in C. Our program is called \textit{generateUHG} and its source code can be downloaded from~\cite{genUHG-site}. We used this program to generate complete lists of all non-isomorphic graphs with exactly $k$ hamiltonian cycles of a given order (and a given lower bound on the girth) for various values of $k$. The main computational results for uniquely hamiltonian graphs are listed in this section. (The counts of planar uniquely hamiltonian graphs were already reported in Table~\ref{table:counts_planar_uhg} from Section~\ref{subsect:results_uhg} in the context of the Bondy-Jackson conjecture.) Additional tables with counts of graphs with exactly $k$ hamiltonian cycles for $k \neq 1$ and counts for cubic graphs can be found in the Appendix.

Table~\ref{table:counts_uhg} lists the counts of uniquely hamiltonian graphs, such graphs of girth at least~4, and such graphs of girth at least~5. The running times of our program are reported in Table~\ref{table:runningtime_uhg}. The code was compiled using gcc and was performed on Intel Xeon E5-2680 CPU's at 2.60GHz. The running times for the larger orders include a small overhead due to parallelisation. %dwz golett
The total computational effort for this project amounted to 40 CPU years and the most time-consuming computations were the generation of all uniquely hamiltonian graphs on 15~vertices and the test if any cubic graphs on 32~vertices contain exactly three hamiltonian cycles (which required 11 and 20 CPU years, respectively).

We also compared the running times of our program \textit{generateUHG} to the approach of using the program \textit{geng}~\cite{nauty-website, mckay_14} to generate all graphs and then filtering the uniquely hamiltonian graphs (which was, as far as we are aware, up until now the only available method in the literature to generate all uniquely hamiltonian graphs of a given order). Our program is about 200 times faster than the filter approach for generating uniquely hamiltonian graphs of order 11. For order 12 it is 600 times faster than the filter approach and for order 13 about 3000 times faster.

\begin{table}[h!tb]
	\centering
	\small
	\begin{tabular}{crrr}
		\toprule	
		Order & \#\,UH graphs &  girth $\geq 4$  & girth $\geq 5$ \\
		\midrule
		3 & 1 & 0  & 0 \\
		4 & 2 & 1 & 0 \\
		5 & 3 & 1  & 1 \\
		6 & 12 & 2 & 1 \\
		7 & 49 & 3 & 1  \\
		8 & 482 & 11 & 3 \\
		9 & 6 380 & 38  & 4  \\
		10 & 135 252 & 250  & 10  \\
		11 & 3 939 509 & 2 171  & 32  \\
		12 & 166 800 470 & 25 518  & 167 \\	
		13 & 9 739 584 172	& 388 854 & 899 \\
		14 & 818 717 312 364 & 7 283 110 & 6 470 \\
		15 & 95 353 226 103 276 & 171 355 621 & 55 815 \\	
		16 & ? & 4 915 591 680 & 549 981 \\
		17 & ? & 174 203 813 967 & 6 155 795 \\
		18 & ? & 7 526 329 299 531 & 78 520 177 \\
		19 & ? & ? & 1 123 544 810 \\
		20 & ? & ? & 18 005 054 988 \\
		21 & ? & ? & 322 434 738 089 \\
		22 & ? & ? & 6 427 598 615 569\\
		\bottomrule		
	\end{tabular}
	\caption{The number of uniquely hamiltonian graphs, uniquely hamiltonian graphs of girth at least~4, and uniquely hamiltonian graphs of girth at least~5, respectively.}
	\label{table:counts_uhg}
\end{table}

\begin{table}[h!tb]
	\centering
	\small
	\begin{tabular}{crrr}
		\toprule	
		Order & time UH graphs &  time girth $\geq 4$  & time girth $\geq 5$ \\
		\midrule
10 & $<1$ & $<1$ & $<1$ \\
11 & 8.2 & $<1$ & $<1$ \\
12 & 383 & $<1$ & $<1$ \\
13 & 25 944 & 1 & $<1$ \\
14 & 2 487 313 & 20.2 & $<1$ \\
15 & 358 436 755 & 527 & $<1$ \\
16 &  & 19 375 & 1.7 \\
17 &  & 739 167 & 21 \\
18 &  & 36 974 877 & 292 \\
19 &  &  & 5 242 \\
20 &  &  & 89 701 \\
21 &  &  & 1 722 169 \\
22 &  &  & 34 513 677 \\
		\bottomrule		
	\end{tabular}
	\caption{Running time (in seconds) of our algorithm for the generation of uniquely hamiltonian graphs, uniquely hamiltonian graphs of girth at least~4, and uniquely hamiltonian graphs of girth at least~5, respectively.}
	\label{table:runningtime_uhg}
\end{table}

The graphs from Tables~\ref{table:counts_planar_uhg} and~\ref{table:counts_uhg} can be downloaded from the \textit{House of Graphs}~\cite{hog} at \url{http://hog.grinvin.org/UHG}.

%See appendix!
%\begin{table}[h!tb]
%	\centering
%	\small
%	\begin{tabular}{crr}
%		\toprule	
%		Order & \#\,non-ham.\ graphs &  \#\,conn.\ non-ham.\ graphs \\
%		\midrule
%2 & 2 & 1 \\
%3 & 3 & 1 \\
%4 & 8 & 3 \\
%5 & 26 & 13 \\
%6 & 108 & 64 \\
%7 & 661 & 470 \\
%8 & 6 150 & 4921 \\
%9 & 97 585 & 83 997 \\
%10 & 2 700 050 & 2 411 453 \\
%11 & 135 841 840 & 123 544 541 \\
%12 & 12 568 984 762 & 11 537 642 646 \\
%13 & 2 179 513 027 405 & 2 013 389 528 672 \\
%		\bottomrule		
%	\end{tabular}
%	\caption{The number all non-hamiltonian and connected non-hamiltonian graphs, respectively.}
%	\label{table:counts_nonham}
%\end{table}

\subsubsection*{Correctness testing}

It is important to independently verify computational results to minimise the chance of programming errors. The counts of all (connected) non-hamiltonian graphs up to 12~vertices were already on the On-Line Encyclopedia of Integer Sequences~\cite{OEIS} (i.e.\ sequence A126149) and are in complete agreement with our results from Table~\ref{table:counts_num_cycles}.

We ran the program \textit{geng}~\cite{nauty-website, mckay_14} to generate all graphs up to 13~vertices and used a separate program to count the number of hamiltonian cycles of the generated graphs (cf.\ Table~\ref{table:counts_num_cycles} in the Appendix). Also here the results were in complete agreement with the counts we obtained using our generator for graphs with exactly $k$ hamiltonian cycles. Furthermore, we used \textit{geng} to compute all uniquely hamiltonian graphs of girth at least 4 up to 16 vertices and all uniquely hamiltonian graphs of girth at least 5 up to 20~vertices. These counts were in complete agreement with the results from Table~\ref{table:counts_uhg}, as well.

Similarly, we used the program \textit{snarkhunter}~\cite{brinkmann_11} to generate all cubic graphs up to 32~vertices and used a separate program to count the number of hamiltonian cycles of the generated graphs (cf.\ Table~\ref{table:counts_num_cycles_cubic}).
By restricting the maximum degree to three in our generator for graphs with $k$ hamiltonian cycles, we were able to verify these results up to 20~vertices.

%\section{Concluding remarks}

%\jg{TODO: possibly add some discussion or open problems? Of schrappen want het artikel is al lang genoeg?}

%--------------------------------------------------------------------------------------------------

\section*{Acknowledgements}

Most computations for this work were carried out using the Stevin Supercomputer Infrastructure at Ghent University. We would like to thank Gunnar Brinkmann for providing us with an independent program for counting hamiltonian cycles.
%Both authors are supported by a Postdoctoral Fellowship of the Research Foundation Flanders (FWO).
%We thank Gunnar Brinkmann for providing us with an independent program for testing almost hypohamiltonicity and Brendan McKay for his advice on \textit{plantri}.

%--------------------------------------------------------------------------------------------------

%\section*{References}

%\bibliographystyle{unsrt}
\bibliographystyle{plain}
\bibliography{references.bib}

\newpage

\section*{Appendix}

\begin{table}[h!]
	\centering
	\setlength{\tabcolsep}{2.5pt}
	\small
	\begin{tabular}{c r r r r r r }
		\toprule	
		Order & $0$ & $1$ & $2$ & $3$ & $> 3$ \\
		\midrule
2 & 1 & 0 & 0 & 0 & 0 \\
3 & 1 & 1 & 0 & 0 & 0 \\
4 & 3 & 2 & 0 & 1 & 0 \\
5 & 13 & 3 & 2 & 0 & 3 \\
6 & 64 & 12 & 11 & 3 & 22 \\
7 & 470 & 49 & 75 & 17 & 242 \\
8 & 4 921 & 482 & 740 & 283 & 4 691 \\
9 & 83 997 & 6 380 & 10 692 & 5 069 & 154 942 \\
10 & 2 411 453 & 135 252 & 229 068 & 132 345 & 8 808 453 \\
11 & 123 544 541 & 3 939 509 & 7 005 022 & 4 451 059 & 867 760 434 \\
12 & 11 537 642 646 & 166 800 470 & 305 866 545 & 209 875 768 & 151 839 645 047 \\
13 & 2 013 389 528 672 & 9 739 584 172 & 18 868 736 922 & 13 364 007 134 & 48 280 546 012 319 \\
14 & ? & 818 717 312 364 & ? & ? & ? \\
15 & ? & 95 353 226 103 276 & ? & ? & ? \\
		\bottomrule
	\end{tabular}
	\caption{Number of connected graphs which contain exactly 0, 1, 2, 3 or $>3$ hamiltonian cycles, respectively.}
	\label{table:counts_num_cycles}
\end{table}

\begin{table}[h!]
	\centering
	\small
	\begin{tabular}{c r r r }
		\toprule	
		Order & $0$ & $3$ & $> 3$ \\
		\midrule
4 & 0 & 1 & 0 \\
6 & 0 & 1 & 1 \\
8 & 0 & 1 & 4 \\
10 & 2 & 3 & 14 \\
12 & 5 & 7 & 73 \\
14 & 35 & 24 & 450 \\
16 & 219 & 93 & 3 748 \\
18 & 1 666 & 435 & 39 200 \\
20 & 14 498 & 2 112 & 493 879 \\
22 & 148 790 & 11 019 & 7 159 638 \\
24 & 1 768 732 & 58 833 & 116 112 970 \\
26 & 24 029 714 & 322 733 & 2 070 128 417 \\
28 & 366 939 032 & 1 799 413 & 40 128 399 566 \\
30 & 6 213 299 362 & 10 185 443 & 839 256 743 264 \\
32 & 115 388 854 837 & 58 344 442 & 18 826 074 985 311 \\
		\bottomrule
	\end{tabular}
	\caption{Number of connected cubic graphs which contain exactly 0, 3 or $> 3$ hamiltonian cycles, respectively.}
	\label{table:counts_num_cycles_cubic}
\end{table}

\begin{table}[h!]
	\centering
	\small
	\begin{tabular}{c r r r }
		\toprule	
		Order & $0$ & $3$ & $> 3$ \\
		\midrule
4 & 0 & 0 & 0 \\
6 & 0 & 0 & 1 \\
8 & 0 & 0 & 2 \\
10 & 1 & 0 & 5 \\
12 & 0 & 0 & 22 \\
14 & 2 & 0 & 108 \\
16 & 8 & 0 & 784 \\
18 & 59 & 1 & 7 745 \\
20 & 425 & 0 & 97 121 \\
22 & 3 862 & 0 & 1 431 858 \\
24 & 41 293 & 0 & 23 739 521 \\
26 & 518 159 & 0 & 432 239 409 \\
28 & 7 398 734 & 0 & 8 535 072 760 \\
30 & 117 963 348 & 1 & 181 374 174 463 \\
32 & 2 069 516 990 & 0 & 4 125 007 626 872 \\
		\bottomrule
	\end{tabular}
	\caption{Number of connected cubic graphs with girth at least 4 which contain exactly 0, 3 or $> 3$ hamiltonian cycles, respectively.}
	\label{table:counts_num_cycles_cubic-g4}
\end{table}

\begin{table}[h!]
	\centering
	\small
	\begin{tabular}{c r r r }
		\toprule	
		Order & $0$ & $3$ & $> 3$ \\
		\midrule
4 & 0 & 0 & 0 \\
6 & 0 & 0 & 0 \\
8 & 0 & 0 & 0 \\
10 & 1 & 0 & 0 \\
12 & 0 & 0 & 2 \\
14 & 0 & 0 & 9 \\
16 & 0 & 0 & 49 \\
18 & 3 & 1 & 451 \\
20 & 15 & 0 & 5 768 \\
22 & 110 & 0 & 90 828 \\
24 & 1 130 & 0 & 1 619 349 \\
26 & 15 444 & 0 & 31 463 140 \\
28 & 239 126 & 0 & 656 544 764 \\
30 & 4 073 824 & 1 & 14 617 797 379 \\
32 & 75 458 941 & 0 & 345 900 189 621 \\
		\bottomrule
	\end{tabular}
	\caption{Number of connected cubic graphs with girth at least 5 which contain exactly 0, 3 or $> 3$ hamiltonian cycles, respectively.}
	\label{table:counts_num_cycles_cubic-g5}
\end{table}

\begin{figure}[h!tb]
    \centering
   \subfloat[]{\includegraphics[width=0.28\textwidth]{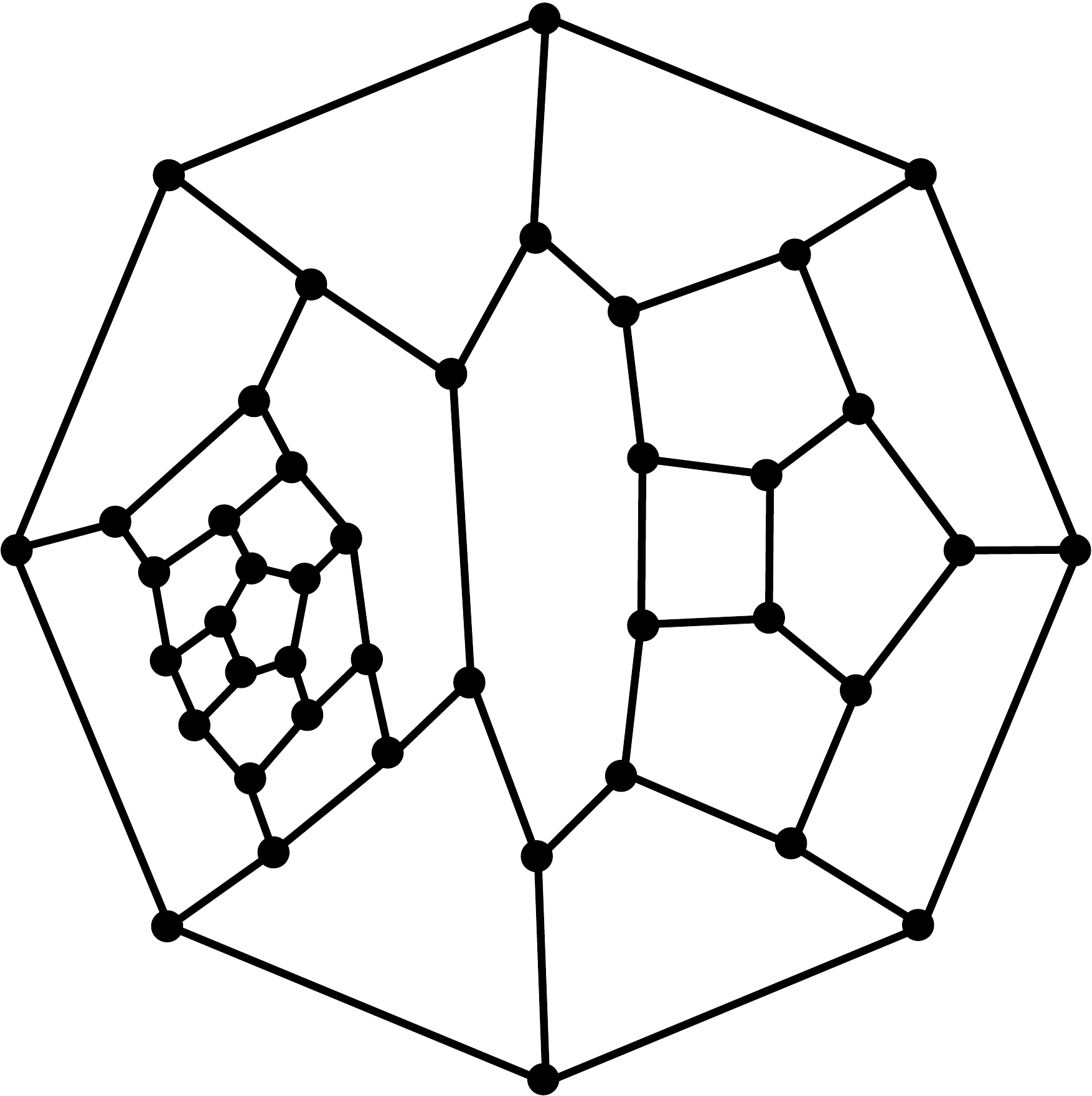}} \qquad
    \subfloat[]{\includegraphics[width=0.28\textwidth]{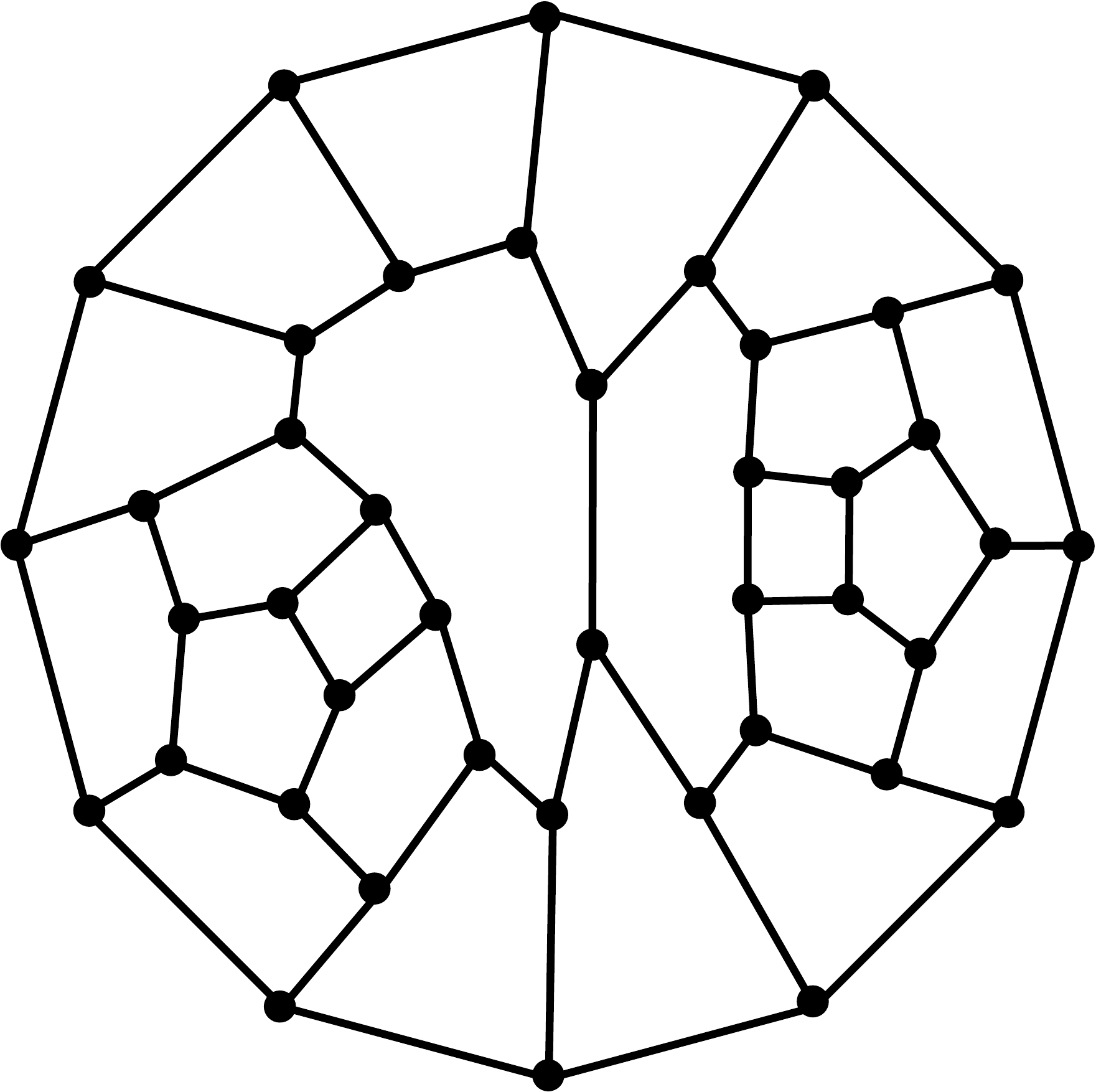}} \qquad
    \subfloat[]{\label{fig:planar_42v_4HC_subdiv}\includegraphics[width=0.28\textwidth]{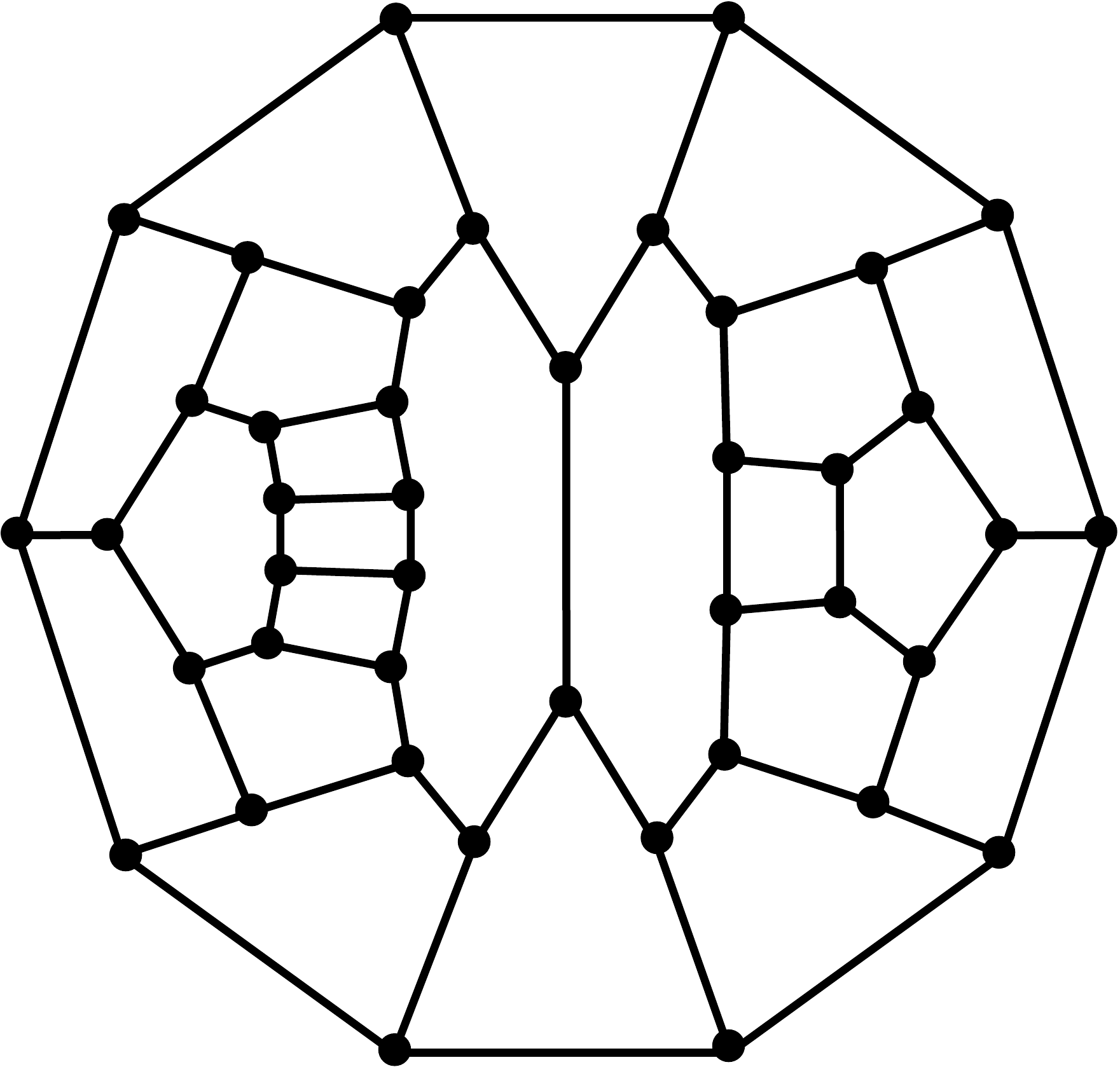}} \qquad
    \subfloat[]{\includegraphics[width=0.28\textwidth]{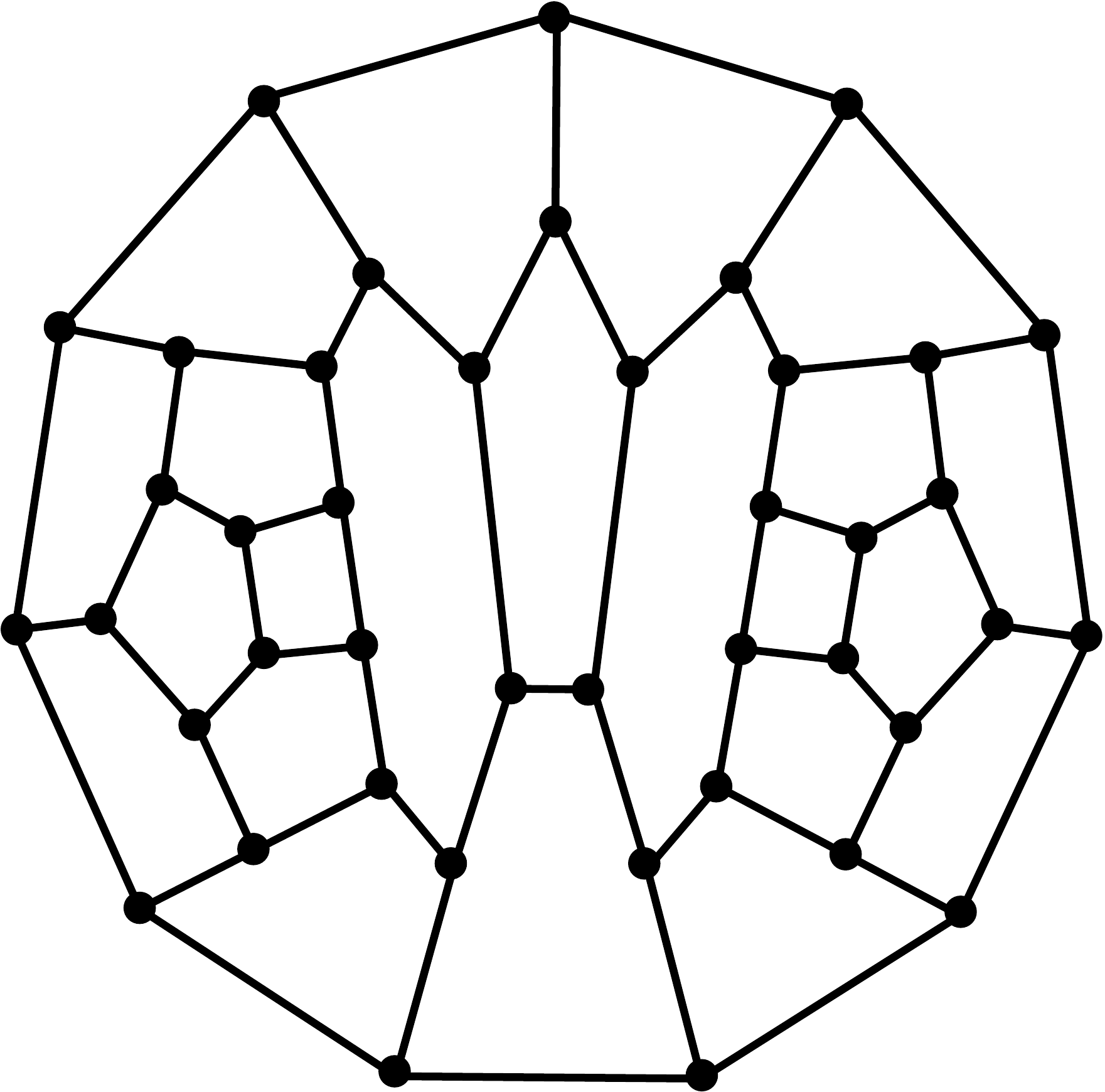}} \qquad
    \subfloat[]{\includegraphics[width=0.28\textwidth]{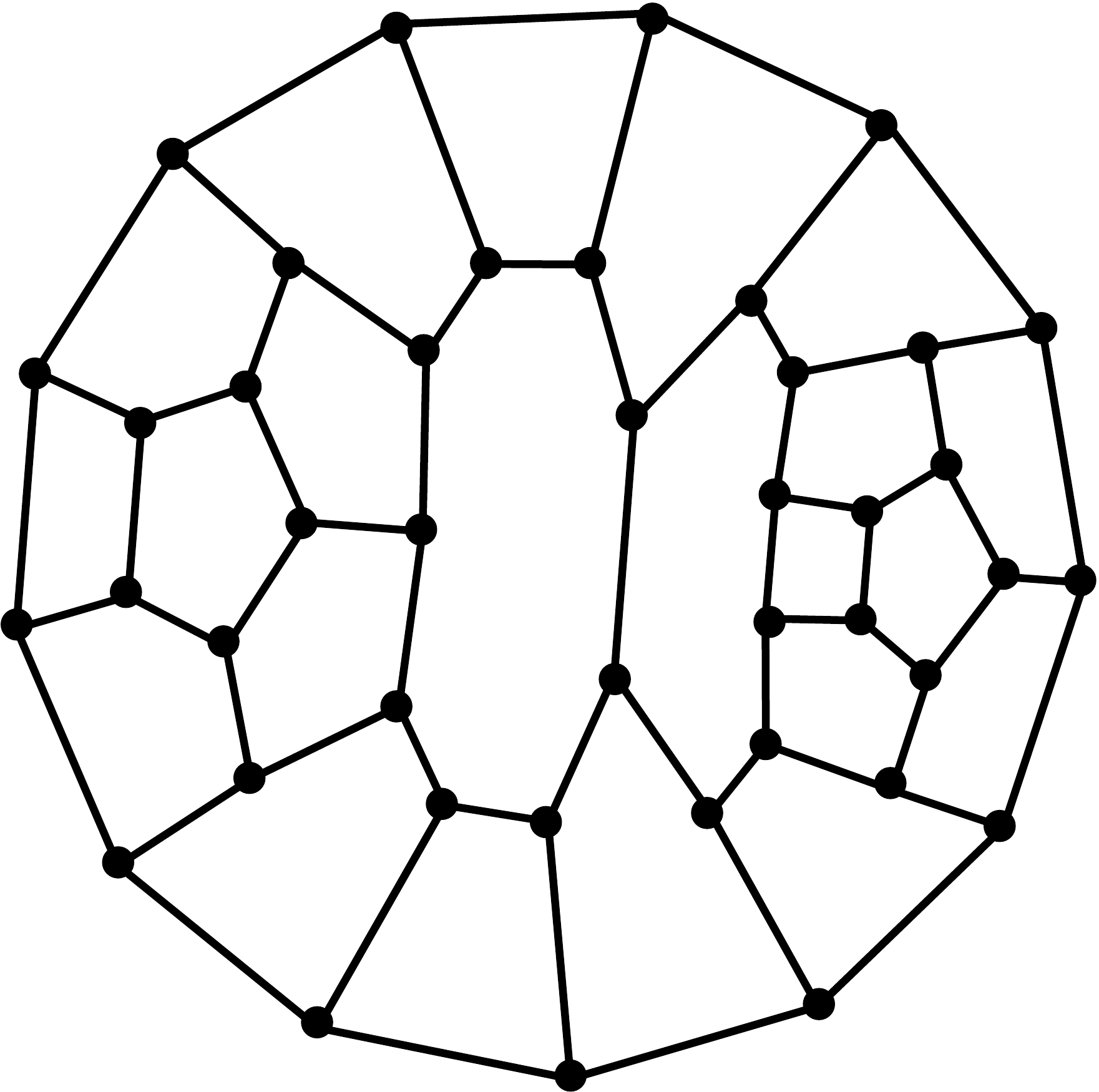}}
    \caption{The five planar cyclically $4$-edge-connected cubic graphs with exactly four hamiltonian cycles on $42$ vertices from Observation~\ref{obs:pl_4HC}.}
	\label{fig:planar_42v_4HC}
\end{figure}

\end{document}